\newcommand{\CC}{\mathrm{C}}
\newcommand{\la}{\langle}
\newcommand{\ra}{\rangle}
\newcommand{\leqs}{\leqslant}
\newcommand{\Aut}{\mathrm{Aut}}
\newcommand{\PSL}{\mathrm{PSL}}
\newcommand{\GL}{\mathrm{GL}}
\newcommand{\PGammaL}{\mathrm{P\Gamma L}}
\newcommand{\PSU}{\mathrm{PSU}}
\newcommand{\Sz}{\mathrm{Sz}}
\def\Sym{\mathrm{Sym}}
\newcommand{\imod}[1]{\allowbreak\mkern4mu({\operator@font mod}\,\,#1)}
\newtheorem{theorem}{Theorem}
\newtheorem*{conj*}{Conjecture}
\newtheorem{problem}[theorem]{Problem}
\newtheorem{example}[theorem]{Example}
\newtheorem{thm}{Theorem}[section]
\newtheorem{lem}[thm]{Lemma}
\newtheorem{cor}[thm]{Corollary}
\newtheorem*{prob*}{Problem}
\theoremstyle{definition}
\title{Fixers and Stabilizers for Ree groups}
\author{Yilin Xie }
\date{August 2024}
\begin{document}

\maketitle

\begin{abstract}
	Let $G$ be a finite permutation group on $\Omega,$ a subgroup $K\leqslant G$ is called a fixer if each element in $K$ fixes some element in $\Omega.$
	In this paper, we characterize fixers $K$ with $|K|\geqslant |G_\omega|$ for each primitive action of almost simple group $G$ with socle ${}^2G_2(q).$
\end{abstract}

\section{Introduction}

Let $G\leqslant \Sym(\Omega)$ be a finite transitive group.
A subgroup $K\leqslant G$ is called a \textit{fixer} if each element in $K$ fixes some point in $\Omega.$
The point stabilizer $G_\omega$ is a fixer, and $G_\omega$ may be a subgroup of a fixer in some cases.
We call a fixer $K$ \textit{stable} if $K$ is a subgroup of a point-stabilizer, as in \cite{HLX_fixer}.
Motivated by some topics of combinatorics, a natural question is to compare fixers and stabilizers, which led to the following definition:
\begin{itemize}
\item A fixer $K$ is called {\it maximal} if any subgroup $H$ with $K<H<G$ is not a fixer,
\item A fixer $K$ is called {\it maximum} if the order $|K|$ is the largest order among fixers.
\end{itemize}
Thus, for a maximum fixer $K$, we have $|K|\geqslant|G_\omega|$.
A fixer $K$ is called {\it large} if $|K|\geqslant|G_\omega|$.
A natural question is relative to the stabilizer: how large a large fixer can be?
In Example~\ref{ex:large-fixers}, the fixer has an order larger than the order of the stabilizer since the stabilizer is a proper subgroup of a fixer, which motivates us to explore the case where the stabilizer is a maximal subgroup, so the group primitive, refer to \cite{LPSX}.
On the other hand, Example~\ref{ex:small-fixers} shows that a maximal fixer may have a smaller order than the order of the point stabilizer; in this case, $K$ is called a {\it small maximal fixer}.
These observations motivate us to propose the following problem.
(Recall that a group $G$ is said to be {\it almost simple} if $T\trianglelefteqslant G\trianglelefteqslant \Aut(T)$ for a finite nonabelian simple group $T$.)

\begin{problem}\label{large-small}
{\rm
For an almost simple primitive permutation group $G$ on $\Omega$, 
\begin{itemize}
\item [(A)] determine large fixers,
\item [(B)] determine small maximal fixers.
\end{itemize}
}
\end{problem}

%

Problem~\ref{large-small}\,(A) has been solved for several notable cases, see \cite{HLX_fixer} for almost simple groups with socle $\PSL_2(q)$, and  \cite{LPSX} for almost simple Suzuki groups with socle $\Sz(q)$.
In this paper, we solve Problem~\ref{large-small}\,(A) for almost simple Ree groups.
With a subsequent paper which is devoted to solving Problem~\ref{large-small}\,(A) for almost simple groups with socle being $\PSU_3(q)$, Problem~\ref{large-small}\,(A) is solved for almost simple groups of Lie type of Lie rank 1.
Let
\[\mbox{$G_0={}^2G_2(q)$, where $q=3^{2n+1}$, a {\it Ree group}.}\]
Then $\Aut(G_0)=G_0{:}\la \phi \ra,$ where $\phi$ is a field automorphism of $G_0$ defined at the beginning of Section~\ref{sec:ree}.

The main result of this paper is the following theorem.

\begin{theorem}\label{thm:main}
Let $G_0={}^2G_2(q)$, where $q=3^{2n+1}$, and let $G=G_0.\langle \psi\rangle$ with $\psi\in\langle \phi\rangle$.
Let $G$ be a primitive permutation group on $\Omega$ which has a fixer $K$ with $|K|\geqslant|G_\omega|$.
Then either
\begin{itemize}
\item [{\rm(i)}] $G_\omega={}^2G_2(q_0).\langle \psi\rangle $, and $K=N_G(E_0)\cong ([q^3]{:}(q_0-1)){:}\langle \psi\rangle$, where $q=q_0^r$ for some odd prime $r\neq 3$, and $E_0$ is an elementary abelian $3$-group defined in Lemma~\ref{subfield} or

\item [{\rm(ii)}] $G={}^2G_2(27).\langle \psi\rangle $, $G_\omega=(2^2\times D_{14}){:}3{:}\langle \psi \rangle$, and
$K\cong (2^3{:}7{:}3){:}\langle \psi \rangle$ is the normalizer of a Sylow $2$-subgroup of $G$.



\end{itemize}
%
%
\end{theorem}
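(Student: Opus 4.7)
My plan is to proceed by case analysis on the isomorphism type of the point stabilizer $G_\omega$. Since $G$ is primitive, $G_\omega$ is a maximal subgroup of $G$, and I would begin by invoking Kleidman's classification of the maximal subgroups of ${}^2G_2(q)$ (and its extension to $\Aut(G_0)$) to list the candidates: the parabolic subgroup $[q^3]{:}(q-1)$, the involution centralizer $2\times\PSL_2(q)$, the three cyclic torus normalizers of orders $6(q\pm\sqrt{3q}+1)$ and $(q+1){:}6$, the subfield subgroups ${}^2G_2(q_0)$ with $q=q_0^r$ for an odd prime $r$, and the small ``Frobenius-type'' maximal subgroup $(2^2\times D_{(q+1)/2}){:}3$. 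For each, I would either construct the claimed large fixer or rule out the existence of one.

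The fundamental tool is that a fixer $K\leqs G$ satisfies $K\subseteq \bigcup_{g\in G} g G_\omega g^{-1}$, so every element of $K$ must be $G$-conjugate into $G_\omega$. I would use this in two directions: constructively, by checking that a candidate $K$ only involves conjugacy classes meeting $G_\omega$; and obstructively, by exhibiting an element of $K$ whose $G$-class avoids $G_\omega$ entirely. The constraint $|K|\geqs |G_\omega|$ combined with the maximal subgroup classification sharply restricts $K$ itself; typically $K$ must either coincide with $G_\omega$ (the stable case) or be contained in a maximal subgroup of order at least $|G_\omega|$. I would tabulate the orders of maximal subgroups to decide, for each $G_\omega$, which overgroups $K$ are even numerically possible.

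The main positive construction is case (i): when $G_\omega={}^2G_2(q_0).\la\psi\ra$, I would take $K=N_G(E_0)$, where $E_0$ is the subfield elementary abelian $3$-subgroup from Lemma~\ref{subfield}. Checking $|K|=q^3(q_0-1)|\psi|\geqs |G_\omega|$ is a direct computation, and $K$ is a fixer because every element of $K$ lies in (a conjugate of) the parabolic of ${}^2G_2(q_0)$, hence in a conjugate of $G_\omega$. For the exceptional case (ii), $q=27$ is small enough that the relevant fusion can be read off from the Atlas or verified by GAP; I would carry out this calculation to pin down $K$ as the normalizer of a Sylow $2$-subgroup. The main obstacle is the parabolic case $G_\omega=[q^3]{:}(q-1){:}\la\psi\ra$: because this stabilizer is relatively small, the inequality $|K|\geqs |G_\omega|$ admits many potential $K$, and ruling them out requires identifying, in each candidate overgroup, a semisimple element whose $G$-class misses the parabolic — essentially showing that no large fixer exists outside the subfield and exceptional scenarios. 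This fusion-control step, carried out uniformly in $q$, will be the technical core of the proof.
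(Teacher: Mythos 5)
Your overall skeleton (run through the maximal subgroups as candidates for $G_\omega$, use $K\subseteq\bigcup_g G_\omega^g$ both constructively and as an obstruction, and build $N_G(E_0)$ in the subfield case) is the same as the paper's, but you have misplaced the difficulty and, more seriously, omitted the step that carries the real content. First, the parabolic case is not the technical core — it is the easiest case. The Borel subgroup $[q^3]{:}(q-1)$ is the \emph{largest} maximal subgroup of ${}^2G_2(q)$, not a small one, and the corresponding action is $2$-transitive of degree $q^3+1$; the paper dismisses it in one line by quoting the EKR theorem for $2$-transitive groups of Meagher--Spiga--Tiep (alternatively, a crude order comparison with the other maximal subgroups already forces $K$ into a conjugate of the Borel). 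Conversely, in the genuinely small cases $G_\omega=N_G(A_i)$ your criterion ``$K$ is contained in a maximal subgroup of order at least $|G_\omega|$'' restricts nothing, since every maximal subgroup satisfies it; what the paper actually uses there is the spectrum condition $\mathrm{Spec}(K)\subseteq\mathrm{Spec}(G_\omega)$ together with Levchuk--Nuzhin's classification of the soluble and insoluble subgroups of ${}^2G_2(q)$ to pin $K_0$ down (this is how the exceptional $q=27$ Sylow $2$-normalizer is found), and you would need some such subgroup-structure input to make your ``exhibit a class missing $G_\omega$'' step effective.

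Second, the real core of the theorem is the exclusion of $r=3$ in case (i), and your proposal does not address it at all. Your verification that $N_G(E_0)$ is a fixer (``every element lies in a conjugate of the parabolic of ${}^2G_2(q_0)$'') is precisely the nontrivial fusion statement: one must show that every unipotent class of ${}^2G_2(q)$, in particular all three classes of elements of order $9$, meets ${}^2G_2(q_0)$. The paper proves (via the criterion of Lemma~\ref{conj9}, Hilbert 90 and the trace map) that this holds exactly when $3\nmid r$; when $q=q_0^3$ only one order-$9$ class meets the subfield subgroup, so $N_{G_0}(E_0)$ is \emph{not} a fixer, and ruling out every large fixer in that case requires the $\mathbb{F}_3$-linear algebra of Section~\ref{p:field} (intersections $\bigcap_t tA$ of translates of the trace-zero hyperplane and the dimension formula of Lemma~\ref{lem:dimintA}) to bound $|K\cap Q|\leqslant q^2$ and reach a contradiction. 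Without this your argument would wrongly produce a large fixer for $r=3$ as well. Finally, the outer-coset elements cannot be waved through: for $\psi\neq 1$ the paper needs the case-by-case conjugation arguments of Lemmas~\ref{lem:subfield1}--\ref{lem:q27} (additive and multiplicative Hilbert 90, together with Lemma~\ref{trqq1}) carried out uniformly in $q$; only the single group $q=27$ could plausibly be left to a machine computation as you suggest.
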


The principle motivations for the study of fixers in permutation groups come from the relations of fixers with dearrangements and intersecting subsets defined below.

One motivation is the study of EKR-type problems for permutation groups, which study intersecting subsets.
A subset $S\subseteq G$ is called \textit{intersecting} if for any two elements $x, y\in S,$ there exists some $\omega\in \Omega$ such that $\omega^x=\omega^y.$
It is not difficult to see $G_{\omega_1}g$ is an intersecting subset for any $\omega_1\in \Omega$ and $g\in G$ by definition. 
The group $G$ is said to have the Erd\"os-Ko-Rado property (abbreviated as EKR property) if for any intersecting subset $S$ of $G,$
$|S|\leqslant |G_\omega|,$ $G$ is further said to have the strict EKR property if for any intersecting subset $S$ with $|S|=|G_\omega|$, $S=G_{\omega_1}g$ for some $\omega_1\in \Omega$ and $g\in G$. 
EKR-type problems have been studied for a long time; refer to \cite{GM_EKR} and \cite{MST_2-trans}.

Given the definition of fixers and intersecting subsets, it is not difficult to see that a fixer is a subgroup of $G$ that is also an intersecting subset.
Recall that $G$ has the \textit{weak-EKR property} if $G$ does not have a fixer $K$ with $|K|>|G_\omega|,$ and $G$ has the \textit{strict-weak EKR property} if $G$ does not have a non-stable fixer $K$ with $|K|\geqslant |G_\omega|,$
(see the definition at \cite[page 2]{HLX_fixer}).
The following is a direct consequence of Theorem~\ref{thm:main}
\begin{cor}
    Suppose $G$ is a primitive permutation group on $\Omega$ with socle $G_0={}^2G_2(q),$ and $\omega \in \Omega.$
    Then 
    \begin{itemize}
        \item [{\rm(i)}] $G$ does not have the weak-EKR property if and only if $(G_0)_\omega$ is isomorphic to ${}^2G_2(q_0)$, where $q=q_0^r,$ and $r\neq 3$ is a prime;
        \item [{\rm(ii)}] If $G$ has the weak-EKR property, then $G$ has the strict-weak EKR property unless $q=27$ and $G_\omega$ is the normalizer of a Sylow $7$-subgroup of $G.$
    \end{itemize}
      
\end{cor}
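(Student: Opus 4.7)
The corollary follows directly from Theorem~\ref{thm:main} by an order comparison together with one structural observation, so my plan is simply to unpack the two cases of the theorem.

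First I would record the orders. In case~(i) of Theorem~\ref{thm:main}, with $d=|\langle\psi\rangle|$, one has $|G_\omega|=|{}^2G_2(q_0)|\cdot d=q_0^3(q_0^3+1)(q_0-1)d$ and $|K|=q^3(q_0-1)d$, so
\[
\frac{|K|}{|G_\omega|}=\frac{q_0^{3(r-1)}}{q_0^3+1}.
\]
Since $q=3^{2n+1}$ is an odd power of $3$, the exponent $r$ must divide $2n+1$ and so is odd; combined with $r\neq 3$ this forces $r\geqslant 5$, whence $q_0^{3(r-1)}\geqslant q_0^{12}>q_0^3+1$ for every $q_0\geqslant 3$. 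Thus case~(i) always produces a fixer with $|K|>|G_\omega|$ strictly. In case~(ii), a direct calculation gives $|K|=|G_\omega|=168d$.

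For part~(i) of the corollary I would then argue: if $G$ fails the weak-EKR property, there is a fixer $K$ with $|K|>|G_\omega|$, so by Theorem~\ref{thm:main} we must be in case~(i) (case~(ii) gives only equality), which is exactly the description of the stabilizer in the corollary. Conversely, in case~(i) the fixer $N_G(E_0)$ witnesses the failure of weak-EKR by the order computation above.

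For part~(ii) I would argue: assuming weak-EKR, case~(i) of Theorem~\ref{thm:main} is excluded. So if $G$ also fails the strict-weak EKR property, there exists a non-stable fixer $K$ with $|K|\geqslant|G_\omega|$; by Theorem~\ref{thm:main} this forces case~(ii), giving the description in the corollary. The only nontrivial point is to check that the fixer $K\cong(2^3{:}7{:}3){:}\langle\psi\rangle$ in case~(ii) is genuinely non-stable, i.e.\ not contained in any conjugate of $G_\omega=(2^2\times D_{14}){:}3{:}\langle\psi\rangle$. Since $|K|=|G_\omega|$, this amounts to showing $K\not\cong G_\omega$, which I would do by comparing Sylow $2$-structure: the normal subgroup $2^3$ is a Sylow $2$-subgroup of $K$ (normal by construction of the semidirect product), whereas the Sylow $2$-subgroup of $G_\omega$ is $2^2\times\langle s\rangle\leqslant 2^2\times D_{14}$, which is \emph{not} normal because $\langle s\rangle$ is not normal in $D_{14}$ (there are seven Sylow $2$-subgroups of $D_{14}$). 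This structural distinction is the only real step; the rest is bookkeeping from Theorem~\ref{thm:main}.
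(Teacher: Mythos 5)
Your proposal is correct and follows exactly the route the paper intends: the paper states the corollary as a direct consequence of Theorem~\ref{thm:main} without writing out a proof, and your unpacking (the strict inequality $|K|/|G_\omega|=q_0^{3(r-1)}/(q_0^3+1)>1$ in case~(i), equality $|K|=|G_\omega|=168d$ in case~(ii), and the Sylow-$2$ normality comparison showing the case~(ii) fixer is non-stable) supplies precisely the bookkeeping the paper leaves implicit, relying as the paper does on Lemmas~\ref{subfield}, \ref{lem:subfield1} and \ref{lem:q27} for the existence of the witnessing fixers.
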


Moreover, \cite[Theorem 2.10]{HLX_fixer} shows if $G$ is a primitive group and $G$ has a fixer $K$ with $|K|>|G_\omega|,$ then $G$ can be embedded in a wreath product $L\wr S_k$, where $k>1$, $L$ is an almost simple group acting on $\Sigma,$ and $\Omega=\Sigma^k$.
It is also shown in \cite[Lemma 3.3]{LPSX} that we can construct large fixers of $G$ from the large fixers of $L\leqs \mathrm{Sym}(\Sigma)$.
This also motivates us to study large fixers of almost simple primitive groups.

Another motivation comes from derangements of permutation groups. An element $g\in G$ is a derangement if $g$ does not fix any point in $\Omega,$ or $g\in G\backslash \bigcup_{g\in G}G_\omega^g$ in the language of abstract group theory. 
By a classical result of Jordan in \cite{J_der}, $G$ contains a derangement since $G$ is transitive.
Note that $K\leqslant G$ is a fixer if and only if $K$ does not contain any derangement of $G,$ so a fixer $K$ is not transitive.
Derangements of permutation groups have been studied for a long time. For example, it is determined in \cite{BG_classical} and \cite{BG_loc} the primitive actions of $G$ such that $G$ does not contain a derangement of order $p$ for all prime $p$ and classical groups $G.$
We refer to \cite{BG_classical} for more interesting problems about derangements of permutation groups.

Finally, let us briefly introduce how we establish Theorem~\ref{thm:main}.
Assume $G$ is an almost simple group with socle $G_0\cong {}^2G_2(q),$ and $K\leqslant G$ is a fixer with $|K|\geqslant |G_\omega|.$
We first introduce some basic properties of fixers in Section ~\ref{base:fixer}.
Then, we introduce some basic properties of the simple Ree group $G_0={}^2G_2(q)$ in Section~\ref{sec:ree}.
In particular, we collect its conjugacy classes of unipotent elements and subgroups and the representation of one of its Borel subgroups.
Then, for each primitive action of $G$, we examine all the subgroups of $G_0\cong {}^2G_2(q)$ and check whether they are fixers or not in Section~\ref{sec:red}. In particular, we establish Lemma~\ref{lem:reds}, which reduces the proof to two special cases.

To further deal with these two cases, we need specific results of the finite field $\mathbb{F}_q,$ which we record in Section~\ref{p:field}.
In particular, we collect some results of the trace map. We also describe a family of additive subgroups of $\mathbb{F}_q$ comprising multiples of the kernel of the trace map from $\mathbb{F}_q$ to $\mathbb{F}_3$, and we calculate their intersection, which is non-trivial.
This will be used to bound the order of a unipotent subgroup which intersects non-trivially with a unique $G_0$-class of elements of order $9,$ and eliminate the case  $q=q_0^3$ when $(G_0)_\omega \cong {}^2G_2(q_0)$ in Section~\ref{sec:proof}.
Finally, in Section~\ref{sec:proof}, we further deal with the two cases in Lemma~\ref{lem:reds}, we involve in the field automorphisms of $G_0$ and prove Theorem~\ref{thm:main}. 

\section{Basic properties for fixers}\label{base:fixer}
This section will collect some basic properties for fixers, most recorded in \cite{HLX_fixer}.

\begin{lem}\cite[Lemma 2.1]{HLX_fixer}\label{p:fixers} 
	The following statements are equivalent for a transitive permutation group $G\leqslant \Sym(\Omega)$
	\begin{itemize}
		\item [\rm(i)] $K\leqslant G$ is a fixer;
		\item [\rm(ii)] $K\subseteq \bigcup_{g\in G}G_\omega^g$;
		\item [\rm(iii)] For any $k\in K,$ there exists $g\in G$ such that $k^g\in G_\omega$;
		\item [\rm(iv)] $K^g$ is a fixer for some $g\in G$.
	\end{itemize}

\end{lem}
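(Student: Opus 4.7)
The plan is to establish all four conditions as equivalent in a short cycle: (i) $\Leftrightarrow$ (iii), (ii) $\Leftrightarrow$ (iii), and (i) $\Leftrightarrow$ (iv). The only substantive ingredient will be transitivity of $G$ on $\Omega$, which lets one replace an arbitrary point $\omega' \in \Omega$ by the distinguished point $\omega$ after passing to a $G$-conjugate. Throughout, I will use the convention $k^g = g^{-1} k g$ and $G_\omega^g = g^{-1} G_\omega g$.

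For (i) $\Leftrightarrow$ (iii) I would simply unwind the definition. $K$ is a fixer iff each $k \in K$ satisfies $(\omega')^k = \omega'$ for some $\omega' \in \Omega$. By transitivity, write $\omega' = \omega^{g^{-1}}$ for some $g \in G$. Then $(\omega')^k = \omega'$ becomes $\omega^{g^{-1} k g} = \omega$, i.e.\ $k^g \in G_\omega$, which is exactly (iii). For (ii) $\Leftrightarrow$ (iii), observe that $k \in \bigcup_{g \in G} G_\omega^g$ iff $k \in g^{-1} G_\omega g$ for some $g \in G$, iff $g k g^{-1} \in G_\omega$; setting $h = g^{-1}$, this is $k^h \in G_\omega$, matching (iii).

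For (i) $\Leftrightarrow$ (iv) I would combine the previous equivalences with a reindexing argument. If $K$ is a fixer, then (ii) yields $K \subseteq \bigcup_{h \in G} G_\omega^h$, so for any $g \in G$, $K^g \subseteq \bigcup_{h \in G} G_\omega^{hg} = \bigcup_{h' \in G} G_\omega^{h'}$ after the substitution $h' = hg$; hence $K^g$ is a fixer by (ii). The converse direction follows by applying the same implication to $K^g$ and $g^{-1}$, giving back $K = (K^g)^{g^{-1}}$ as a fixer.

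I do not anticipate any real obstacle, as the lemma is essentially a dictionary between four ways of saying ``$K$ lies in a union of point stabilizers.'' The only care required is bookkeeping for the conjugation conventions, since the statement simultaneously involves subgroup conjugation $G_\omega^g$ and element conjugation $k^g$; once these are aligned, each implication reduces to a one-line manipulation using transitivity.
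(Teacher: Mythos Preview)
Your proof is correct. The paper does not actually prove this lemma but simply cites it from \cite{HLX_fixer}; your argument is the natural unwinding of definitions via transitivity and is essentially the only way to do it.
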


Next, we will give examples of two families of permutation groups and their fixers.

\begin{example}\label{ex:large-fixers}
{\rm
There are examples which show that a fixer may be arbitrarily larger than the stabilizer;
for instance, for $G=\PSL(2,p^f)$ and $\Omega=[G:H]$ with $|H|=p$, each Sylow $p$-subgroup of $G$ is a fixer.
}
\end{example}

\begin{example}\label{ex:small-fixers}
{\rm
 Let $G=\PSL_2(p)$ with $p$ prime and $G_\omega=D_{p-1}$ with $p\equiv 1$ $(\mod 15)$.
Then $A_5$ is a maximal fixer of order less than $p-1$ whenever $p\geqslant 67$.
}
\end{example}

Recall the \textit{spectrum} of a group $K$ is the set of all elements order in $K$, which we denote as $\mathrm{Spec}(K).$
The following lemma is a direct consequence of Lemma~\ref{p:fixers} \rm(iii).
\begin{lem}\label{lem:spec}
	Let $G$ be a transitive permutation group on $\Omega,$ if $K\leqslant G$ is a fixer, then $\mathrm{Spec}(K)\subseteq \mathrm{Spec}(G_\omega)$.
	
\end{lem}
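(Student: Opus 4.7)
The plan is to deduce this directly from part (iii) of Lemma~\ref{p:fixers}, which is explicitly cited in the statement as the justification. The key observation is simply that conjugation in a group preserves the order of elements, so whenever a fixer element gets conjugated into the point stabilizer, its order is automatically witnessed inside $G_\omega$.

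More precisely, I would pick an arbitrary $k\in K$ and let $m$ denote its order, so that $m\in \mathrm{Spec}(K)$ is a generic element. By Lemma~\ref{p:fixers}(iii), since $K$ is a fixer, there exists $g\in G$ such that $k^g\in G_\omega$. Because $|k^g|=|k|=m$, the integer $m$ is the order of an element of $G_\omega$, hence $m\in \mathrm{Spec}(G_\omega)$. Since $k$ was arbitrary this gives $\mathrm{Spec}(K)\subseteq \mathrm{Spec}(G_\omega)$, completing the argument.

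There is no real obstacle here; the statement is an immediate corollary of the characterization of fixers recalled in Lemma~\ref{p:fixers} together with the fact that conjugate elements have the same order. The only thing to be careful about is the logical direction of the containment: it runs from $K$ into $G_\omega$ because the fixer condition lets us push every element of $K$ into a conjugate of the stabilizer, not the other way round.
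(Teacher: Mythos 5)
Your proof is correct and matches the paper's argument exactly: the paper likewise treats this as an immediate consequence of Lemma~\ref{p:fixers}\,(iii) together with the fact that conjugation preserves element orders. Nothing is missing.
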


Assuming further that $G$ is an almost simple primitive permutation group, the following helpful lemma allows us to first determine the large fixers of the socle of $G.$
\begin{lem}\label{lem:redu}
	Assume $G$ is an almost simple primitive permutation group on $\Omega,$ with a normal subgroup $G_0.$ 
	If $K$ is a fixer of $G$ with $|K|\geqslant |G_\omega|,$ then $K_0:=K\cap G_0$ is a fixer of $G_0$ with $|K_0|\geqslant |(G_0)_\omega|.$
\end{lem}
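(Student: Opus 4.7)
The plan is to verify the two required conclusions separately: that $K_0 := K \cap G_0$ is a fixer of $G_0$, and that $|K_0| \geqslant |(G_0)_\omega|$.

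For the first conclusion, I would begin by invoking the standard fact that a nontrivial normal subgroup of a primitive permutation group is transitive; this ensures that $G_0$ acts transitively on $\Omega$, so that the notion of a fixer of $G_0$ actually makes sense. Then any element of $K_0 \subseteq K$ fixes some point of $\Omega$ simply because $K$ is a fixer of $G$; hence $K_0$ is a fixer of $G_0$ by the definition (equivalently, by Lemma~\ref{p:fixers}).

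For the order inequality, the plan is to exploit the factorization $G = G_0 G_\omega$, which is an immediate consequence of the transitivity of $G_0$ (given $g\in G$, choose $g_0\in G_0$ with $g_0\omega = g\omega$; then $g_0^{-1}g \in G_\omega$). Two applications of the second isomorphism theorem then give
\[
\frac{|G_\omega|}{|(G_0)_\omega|} \;=\; |G_\omega : G_\omega \cap G_0| \;=\; |G_\omega G_0 : G_0| \;=\; |G : G_0| \;\geqslant\; |KG_0 : G_0| \;=\; \frac{|K|}{|K_0|},
\]
and rearranging this inequality together with the hypothesis $|K| \geqslant |G_\omega|$ immediately yields $|K_0| \geqslant |(G_0)_\omega|$. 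The argument is essentially formal, and there is no substantive obstacle beyond identifying the factorization $G = G_0 G_\omega$ as the right lever; the primitivity of $G$ enters only through this transitivity of $G_0$.
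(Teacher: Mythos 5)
Your proof is correct and follows essentially the same route as the paper: the paper's one-line inequality $\frac{|K|}{|K_0|}\leqslant \frac{|G_\omega|}{|(G_0)_\omega|}=\frac{|G|}{|G_0|}$ is exactly your chain via $KG_0\leqslant G$ and the factorization $G=G_0G_\omega$ coming from transitivity of the normal subgroup $G_0$. You simply make explicit the role of primitivity and the second isomorphism theorem that the paper leaves implicit.
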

\begin{proof}
	Assume $K$ is a fixer of $G$ and $|K|\geqslant |G_\omega|$.
	Then each element in $K_0\leqslant K$ fixes some point in $\Omega,$ and hence $K_0$ is a fixer of $G_0.$
	Since 
	\begin{equation*}
		\frac{|K|}{|K_0|}\leqslant \frac{|G_\omega|}{|(G_0)_\omega|}=\frac{|G|}{|G_0|},
	\end{equation*}
	it follows that $|K_0|\geqslant |(G_0)_\omega|$.
	This completes the proof.
\end{proof}

\section*{Acknowledgements}
The author thanks professor Caiheng Li for his helpful comments on the earlier draft of this paper.

\section{Basic properties for finite Ree groups}\label{sec:ree}

The finite Ree groups $G_0={^2}G_2(q)$ over $\mathbb{F}_q,$ where $q=3^{2n+1}$ for some integer $n$ was first constructed by Rimhak Ree as the fixed point of an involutionary automorphism of Chevalley group of type $G_2$ over $\mathbb{F}_q$. Later, Robert Wilson gives a new construction using octonion algebra; see \cite{W_newcons}.
For the remainder of this paper, we adopt Rimhak Ree's construction of finite Ree groups.
In particular, let $\Phi$ be a root system of type $G_2$ with a fundamental system $\{a,b\}$, such that $a$ is of length $1$ and $b$ is of length $\sqrt{3}.$
Then the Chevalley elements $\{x_r(t)\mid r\in \Phi, r\in \mathbb{F}_q\}$ (see \cite[Page 63]{Ca_SGL} for the definition of $x_r(t)$) generate the Chevalley group $G_2(q).$ 
Let $r\mapsto \bar{r}, r\in \Phi$ be a symmetry of $\Phi$ such that 
\begin{equation*}
	\bar{a}=b,\quad \overline{a+b}=3a+b, \quad \overline{2a+b}=3a+2b, 
\end{equation*}
and let $\theta$ be the Frobenius automorphism of $\mathbb{F}_q$ such that $\lambda^\theta=\lambda^{3^n}$ for each $\lambda\in \mathbb{F}_q.$
Then 
\begin{equation}\label{e:tau}
 	x_r(t)\mapsto x_{\bar{r}}(t^{\theta(r,r)}), r\in \Phi, t\in \mathbb{F}_q 
\end{equation}
extends to an involutionary automorphism $\tau$ of $G_2(q)$, and $G_0={^2}G_2(q)$ is the subgroup of $G_2(q)$ comprising all the fixed elements of this involutionary automorphism.

The map 
\begin{equation*}
	x_r(t)\mapsto x_r(t^3), \quad r\in \Phi, t\in \mathbb{F}_q
\end{equation*}
can be extended to an automorphism $\phi$ of $G_2(q)$ (see \cite[Page 200]{Ca_SGL}).
Since $\phi$ commutes with $\tau$, for each $g\in G_0\cong {}^2G_2(q)$
\begin{equation*}
	g^{\phi\tau}=g^{\tau\phi}=(g^\tau)^\phi=g^\phi,
\end{equation*} 
and hence $g^\phi\in G_0.$
Thus, $\phi$ is a field automorphism of $G_0.$ 
Moreover, $\Aut(G_0)=G_0{:}\la \phi \ra$ (see \cite[Table 8.43]{low-dim}).

We next describe a Borel subgroup of $G_0,$ which is also a maximal subgroup of $G_0$ by \cite{Levchuk85} in the following lemma. 

\begin{lem}\label{Borel}
    Let 
    \begin{equation*}
    	\begin{aligned}
    		\alpha(t)&=x_a(t^\theta)x_b(t)x_{a+b}(t^{\theta+1})x_{2a+b}(t^{2\theta+1}),\\
    		\beta(u)&=x_{a+b}(u^\theta)x_{3a+b}(u),\\
    		\gamma(v)&=x_{2a+b}(v^{\theta})x_{3a+2b}(v).
    	\end{aligned}
    \end{equation*} 
    Then, 
    \begin{equation*}
    	Q=\{ \alpha(t)\beta(u)\gamma(v) \mid t, u, v\in \mathbb{F}_{3^{2n+1}} \}
    \end{equation*}
    is a unipotent subgroup of $G_0={^2}G_2(q).$
    We denote each element $\alpha(t)\beta(u)\gamma(v)$ in $Q$ as $X_S(t,u,v),$
    where $S=\{a,b,a+b,2a+b,3a+b,3a+2b\}$ is an equivalence class of $\Phi$ defined in \cite[Lemma 13.2.1]{Ca_SGL}.
    Let $n_r(t)=x_r(t)x_{-r}(-t^{-1})x_r(t),$ $h_r(t)=n_r(t)n_r(-1),$ and $h(s)=h_a(s)h_b(s^{3\theta}).$
    Then $$H=\{h(s), s\in \mathbb{F}_q^\times \}$$ is a subgroup of $G_0$ such that $B=QH$ is the normalizer of $Q$ in $G_0,$ with the following rules of calculation
    \begin{equation*}
    	\begin{aligned}
    		X_S(t_1,u_1,v_1)X_S(t_2,u_2,v_2)&=X_S(t_1+t_2,u_1+u_2-t_1t_2^{3\theta}, v_1+v_2-t_2u_1+t_1t_2^{3\theta+1}-t_1^2t_2^{3\theta}) \\
    		h(s_1)h(s_2)&=h(s_1s_2) \\
    		X_S(t,u,v)^{h(s)}&=X_S(s^{3\theta-2}t,s^{1-3\theta}u,s^{-1}v)
    	\end{aligned}
    \end{equation*}
\end{lem}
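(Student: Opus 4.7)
The plan is to verify each of the four assertions by reducing everything to computations inside the ambient Chevalley group $G_2(q)$, using: (a) the Chevalley commutator formula with the structure constants for $G_2$, (b) Steinberg's relations for the torus elements $h_r(s)$, and (c) the fact that an element of $G_2(q)$ lies in $G_0$ precisely when it is fixed by $\tau$ defined in \eqref{e:tau}.

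First I would show $\alpha(t),\beta(u),\gamma(v)\in G_0$ by checking $\tau$-invariance. Under $\tau$, the short roots $a,a+b,2a+b$ are swapped with the long roots $b,3a+b,3a+2b$ respectively; the function $\theta(r,r)$ equals $\theta$ on short roots and $1$ on long roots (with $\theta^2$ the Frobenius $\lambda\mapsto\lambda^3$), so that e.g.\ the pair $x_a(t^\theta)x_b(t)$ is carried to $x_b((t^\theta)^\theta)x_a(t^{\theta})=x_b(t^3)\cdot x_a(t^\theta)$, and upon regrouping (using that the relevant commutators lie in the higher-root part of the unipotent radical) one recovers the original product; the exponents $\theta{+}1$ and $2\theta{+}1$ appearing in $\alpha(t)$ are chosen precisely so that this bookkeeping closes. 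A parallel check disposes of $\beta(u)$ and $\gamma(v)$.

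Next I would establish the multiplication rule in $Q$. Expanding $X_S(t_1,u_1,v_1)X_S(t_2,u_2,v_2)$ as a product of six Chevalley generators and commuting $x_b(t_2)$ past $\alpha(t_1)\beta(u_1)\gamma(v_1)$ with the Chevalley commutator formula
\[
[x_r(t),x_s(u)] = \prod_{i,j>0} x_{ir+js}\bigl(C_{ij,rs}\,t^i u^j\bigr)
\]
produces contributions in roots $a+b,2a+b,3a+b,3a+2b$ with the structure constants for $G_2$ as tabulated in \cite[Sec.~4.2, 5.2]{Ca_SGL}. Collecting terms root by root yields the claimed formula; in particular the Frobenius exponents $3\theta$ and $3\theta+1$ in the second and third coordinates arise from $\theta^2=\,^3$ and from the fact that the short-root Chevalley generators entering $\alpha(t_i)$ carry a $\theta$-twist. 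Since the right-hand side is again of the form $X_S(\cdot,\cdot,\cdot)$, this shows $Q$ is a subgroup of $G_0$, and it is unipotent because it is contained in a conjugate of the upper unitriangular subgroup of $G_2(q)$.

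Finally, I would handle $H$ and the normalizer assertion. The $\tau$-invariance of $h(s)=h_a(s)h_b(s^{3\theta})$ follows from $\tau(h_a(s))=h_b(s^\theta)$ and $\tau(h_b(s))=h_a(s^{3/\theta})$ (which hold because $\tau$ swaps the corresponding long and short roots with the same exponent twists as above), so $H\le G_0$; the multiplicativity $h(s_1)h(s_2)=h(s_1s_2)$ is then immediate from bilinearity of $h_r$. The conjugation formula reduces via $h_r(s)^{-1}x_{r'}(t)h_r(s)=x_{r'}(s^{\la r',r\ra}t)$ to computing the pairings $\la r',a\ra+3\theta\la r',b\ra$ for $r'\in\{a,b,a+b,2a+b,3a+b,3a+2b\}$, which after simplification yields exponents $3\theta-2$, $1-3\theta$ and $-1$ on the three coordinates. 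Consequently $H$ normalizes $Q$, so $B=QH$ is a subgroup normalizing $Q$; on the other hand $B$ is the intersection with $G_0$ of a Borel subgroup of $G_2(q)$, which is self-normalizing, so $N_{G_0}(Q)=B$. The main obstacle will be the root-by-root bookkeeping in the second step: keeping track of the twelve-plus commutator contributions and the correct $G_2$ structure constants is tedious, but entirely mechanical once organized by the height of the root.
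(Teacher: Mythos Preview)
Your approach is exactly the standard one, and it is essentially what the cited references do; the paper itself does not prove this lemma at all but simply refers the reader to \cite[p.~236]{Ca_SGL} or \cite{matrixgen}. So in that sense your proposal goes well beyond what the paper offers.

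That said, a few of the exponent details you wrote down are off. In the paper's convention $\theta$ is the map $\lambda\mapsto\lambda^{3^n}$ with $q=3^{2n+1}$, so as an integer exponent $3\theta^2=q$, not $\theta^2=3$; correspondingly $\theta(r,r)$ is $\theta$ on short roots (where $(r,r)=1$) and $3\theta$ on long roots (where $(r,r)=3$), not $1$. Your $\tau$-invariance check for $\alpha(t)$ and the formula $\tau(h_b(s))=h_a(s^{3/\theta})$ will not close with the values you wrote; you need the long-root exponent $3\theta$ so that the composite $\theta\cdot 3\theta=3\theta^2$ acts trivially on $\mathbb{F}_q$. Similarly, in the conjugation step the pairings $\langle r',a\rangle$ and $\langle r',b\rangle$ must be taken as coroot pairings (so that $h_r(s)^{-1}x_{r'}(t)h_r(s)=x_{r'}(s^{\langle r',r^\vee\rangle}t)$), otherwise the exponents $3\theta-2$, $1-3\theta$, $-1$ will not fall out. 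These are bookkeeping slips rather than conceptual errors, and once corrected your outline goes through; but since the whole content of the lemma \emph{is} this bookkeeping, you should redo those computations carefully before claiming the result.
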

For the proof the above lemma, one may refer to page 236 of \cite{Ca_SGL}, or \cite{matrixgen} alternatively.

We next describe the basic group operation on elements in the unipotent subgroup of $G_0$ defined in the above Lemma~\ref{Borel}.
\begin{lem}\label{uni}
	    The following properties hold for $Q$.
	\begin{itemize}
		\item[{\rm(i)}] 
		$X_S(t,u,v)^{-1}=X_S(-t,-(u+t^{3\theta+1}),-(v+tu)+t^{3\theta+2})$;
		\item[{\rm(ii)}]  $X_S(t,u,v)^3=X_S(0,0,-t^{3\theta+2})$;
		\item [{\rm(iii)}] 
		$X_S(t,u,v)^{X_S(t',u',v')}=X_S(t,u-t(t')^{3\theta}+(t')t^{3\theta},v-t'u+tu'+t(t')^{3\theta+1}-t't^{3\theta+1}-t^2(t')^{3\theta}+(t')^2t^{3\theta});$
		\item[{\rm(iv)}] $[X_S(t',u',v'),X_S(t,u,v)^{-1}]=X_S(0,-t(t')^{3\theta}+t't^{3\theta},-t'u+tu'+t(t')^{3\theta+1}+t^2(t')^{3\theta}+(t')^2t^{3\theta});$
		\item[\rm(v)]
		$Z(Q)=\{X_S(0,0,v)\mid v\in \mathbb{F}_{3^{2n+1}}\};$
		\item[\rm(vi)]
		$Q'=\{X_S(0,u,v)\mid u,v\in \mathbb{F}_{q} \}$ is an elementary abelian group of order $q^2,$ and $Z(Q)\leqslant Q';$
		
	\end{itemize}
\end{lem}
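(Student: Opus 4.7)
My plan is to derive every assertion directly from the single product rule
$X_S(t_1,u_1,v_1)X_S(t_2,u_2,v_2)=X_S(t_1+t_2,\,u_1+u_2-t_1t_2^{3\theta},\,v_1+v_2-t_2u_1+t_1t_2^{3\theta+1}-t_1^2t_2^{3\theta})$
recorded in Lemma~\ref{Borel}, exploiting that $\mathbb{F}_q$ has characteristic $3$: any term with a factor of $3$ vanishes, and since $3\theta=3^{n+1}$ is odd we have $(-1)^{3\theta}=-1$. For (i), I would set $X_S(t,u,v)X_S(t',u',v')=X_S(0,0,0)$ and solve the three coordinate equations in order: the first gives $t'=-t$; substituting into the second and using $(-t)^{3\theta}=-t^{3\theta}$ gives $u'=-(u+t^{3\theta+1})$; and the third then yields $v'$ as stated. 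For (ii), I would first compute $X_S(t,u,v)^2=X_S(-t,\,-u-t^{3\theta+1},\,-v-tu)$ (the third coordinate of the square collapses because $t\cdot t^{3\theta+1}-t^2\cdot t^{3\theta}=0$), then multiply on the right by $X_S(t,u,v)$ and check that both the second and third coordinates telescope to give $X_S(0,0,-t^{3\theta+2})$. For (iii), I would substitute (i) into $X_S(t',u',v')^{-1}X_S(t,u,v)X_S(t',u',v')$ and apply the product rule twice; the first coordinate is preserved, and simplification in characteristic $3$ yields the stated expression. Formula (iv) is then immediate from (iii) and (i), via the identity $[X_S(t',u',v'),X_S(t,u,v)^{-1}]=X_S(t,u,v)^{X_S(t',u',v')}\cdot X_S(t,u,v)^{-1}$.

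Part (v) follows from (iii): conjugation never changes the first coordinate, and the requirement that $X_S(t,u,v)$ be fixed by every such conjugation forces the second-coordinate identity $t(t')^{3\theta}=t'\,t^{3\theta}$ for all $t'\in\mathbb{F}_q$, which implies $t=0$, and then the third-coordinate condition implies $u=0$. Conversely, plugging $t=u=0$ directly into the product rule shows any $X_S(0,0,v)$ commutes with every element of $Q$.

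For (vi), restricting the product rule to $t_1=t_2=0$ gives $(u_1,v_1)(u_2,v_2)=(u_1+u_2,v_1+v_2)$, so $K:=\{X_S(0,u,v)\mid u,v\in\mathbb{F}_q\}$ is elementary abelian of order $q^2$ and visibly contains $Z(Q)$. Since $X_S(t,u,v)\mapsto t$ is a surjective homomorphism $Q\to\mathbb{F}_q^+$ with kernel $K$, the quotient $Q/K$ is abelian, so $Q'\leqslant K$. The reverse inclusion $K\leqslant Q'$ is the only non-mechanical step: I would read off from (iv) that setting $t=0$ yields commutators $X_S(0,0,-t'u)$ realising every value of the third coordinate, while setting $u=v=u'=v'=0$ with $t,t'$ free yields commutators $X_S(0,\,-t(t')^{3\theta}+t'\,t^{3\theta},\,\ast)$. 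The set of achievable second coordinates is an additive subgroup of $\mathbb{F}_q$ stable under the Frobenius $x\mapsto x^3$, and it is nonzero (take $t'=1$ and any $t\in\mathbb{F}_q\setminus\mathbb{F}_3$), hence equals all of $\mathbb{F}_q$. Combining both observations shows $Q'=K$, completing the proof.
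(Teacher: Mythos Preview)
Your treatment of parts (i)--(v) is correct and matches the paper's approach: these are all direct computations from the multiplication rule of Lemma~\ref{Borel}, and the paper simply says as much.

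For part (vi), however, there is a genuine gap. You argue that the projection of $Q'$ to the $u$-coordinate is an additive subgroup of $\mathbb{F}_q$ which is stable under the Frobenius $x\mapsto x^3$ and nonzero, and you conclude that it must therefore be all of $\mathbb{F}_q$. This last implication is false: for example, any subfield of $\mathbb{F}_q$, or the kernel $A=\ker(\Tr_{\mathbb{F}_q/\mathbb{F}_3})$, is a nonzero Frobenius-stable proper additive subgroup. So the argument as written does not close.

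The paper avoids this issue entirely by appealing to Ward's structural result \cite[(2) of the main theorem]{W_ree}, which gives $|Q'|=q^2$ directly; since (iv) already shows $Q'\leqslant K$ and $|K|=q^2$, equality follows. Your more self-contained strategy can be rescued, but it needs a sharper argument: taking $t'=1$ in (iv) shows the $u$-image of $Q'$ contains $A=\{t^{3\theta}-t:t\in\mathbb{F}_q\}$, a hyperplane; and for general $t'$ one obtains $(t')^{3\theta+1}A$. One checks that $\gcd(3\theta+1,q-1)=2$, so for $q>3$ there exists $t'$ with $(t')^{3\theta+1}\notin\mathbb{F}_3$, whence $(t')^{3\theta+1}A\neq A$ and these two hyperplanes span $\mathbb{F}_q$. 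Together with $Z(Q)\leqslant Q'$ this gives $Q'=K$. (For $q=3$ the statement (vi) actually fails, consistent with the paper's implicit standing assumption $n\geqslant 1$.)
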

\begin{proof}
By the rules of multiplication on $Q$ in Lemma~\ref{Borel} and a direct calculation, one get \rm(i) $\sim$ \rm(iv).
\rm(v) follows directly from \rm(iii) and a direct calculation.
For \rm(vi), it follows from (2) of the main theorem of \cite{W_ree} that 
\begin{equation*}
	|Q'|=q^2=|\{X_S(0,u,v)\mid u,v\in \mathbb{F}_q\}|
\end{equation*}
On the other hand, \rm(iv) implies that $Q'\leqslant \{X_S(0,u,v)\mid u,v\in \mathbb{F}_q \},$ yielding \rm(vi).
\end{proof}

The next lemma can be deduced from the main result of \cite{matrixgen}, which will be useful.
\begin{lem}\label{7dimrep}
	There is a $7$-dimensional representation of $G_0$ over $\mathbb{F}_q$ $\rho: G_0 \longrightarrow \GL_7(q)$.
	Under this representation, the Borel subgroup of $G_0$ defined in Lemma~\ref{Borel} coincides with all upper-triangular matrices in $\rho(G_0)$.
    Moreover, 
    \begin{equation*}
        \rho(g^\phi)=\rho(g)^\phi,
    \end{equation*}
    for all $g\in G_0,$
    where \[(\rho(g)^{\phi})_{i,j}=(\rho(g)_{i,j})^{\phi}=\rho(g)_{i,j}^3,\] and $\phi$ is the standard Frobenius automorphism.
\end{lem}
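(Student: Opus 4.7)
The plan is to take the faithful $7$-dimensional representation of the ambient Chevalley group $G_2(q)$ over $\mathbb{F}_q$ --- the unique irreducible module of highest weight equal to the shorter fundamental weight, realizable as the action on the trace-zero part of the split octonion algebra --- and restrict it to $G_0 = {}^2G_2(q) \leqslant G_2(q)$. The explicit matrix entries of the Chevalley generators $x_r(t)$ in this representation are what \cite{matrixgen} writes down, so the proof is essentially a matter of translating that output into our setup.

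For the Borel subgroup claim, I would fix an ordering of the weight basis compatible with the choice of positive roots, so that $\rho(x_r(t))$ is upper unitriangular for each positive root $r \in \Phi$, lower unitriangular for each negative root, and $\rho(h_r(s))$ is diagonal. Since $\alpha(t), \beta(u), \gamma(v)$ from Lemma~\ref{Borel} are products of $x_r(t)$ for $r$ in the set of positive roots $S = \{a, b, a+b, 2a+b, 3a+b, 3a+2b\}$, the image $\rho(Q)$ consists of upper unitriangular matrices and $\rho(H)$ of diagonal matrices; hence $\rho(B) = \rho(QH)$ sits inside the upper triangular subgroup of $\rho(G_0)$. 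For the reverse inclusion, any upper triangular matrix in $\rho(G_0)$ stabilizes the standard complete flag, hence lies in a Borel subgroup of $\rho(G_0)$; since Borel subgroups of $G_0$ are conjugate and self-normalizing, and $B$ already realizes the full upper triangular intersection, the two sets coincide.

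For the Frobenius compatibility, I would verify from the explicit generators that every entry of $\rho(x_r(t))$ is a polynomial in $t$ with coefficients in the prime field $\mathbb{F}_3$ (indeed each entry is $0$, $\pm 1$, or a monomial $\pm t^e$), and similarly that every entry of $\rho(h(s))$ is a monomial in $s$ with $\mathbb{F}_3$-coefficient. In characteristic $3$, any $f(t)\in\mathbb{F}_3[t]$ satisfies $f(t)^3 = f(t^3)$; applying the entry-wise cube Frobenius to $\rho(x_r(t))$ therefore yields $\rho(x_r(t^3)) = \rho(x_r(t)^\phi)$, which matches the definition of $\phi$ from the start of Section~\ref{sec:ree}. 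Since $\rho$ is a homomorphism and $G_0$ is generated by the $x_r(t)$, this identity extends to $\rho(g^\phi) = \rho(g)^\phi$ for every $g \in G_0$. The main obstacle --- and the reason the lemma is stated as a consequence of an outside reference rather than proved from scratch --- lies in producing explicit matrix representatives for every Chevalley generator and checking carefully that all entries land in $\mathbb{F}_3$ after the $\theta$-twisted substitutions defining $\alpha, \beta, \gamma$; this bookkeeping is exactly what \cite{matrixgen} carries out, so the lemma follows by reading off the relevant entries of the matrices tabulated there.
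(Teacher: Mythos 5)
Your overall route is the same as the paper's: the paper offers no independent proof of this lemma, deducing it from the explicit $7\times 7$ matrix generators of \cite{matrixgen}, and your Frobenius argument (every entry of $\rho(x_r(t))$ lies in $\mathbb{F}_3[t]$, so the entrywise cube sends $\rho(x_r(t))$ to $\rho(x_r(t^3))$, and the two homomorphisms $g\mapsto\rho(g^\phi)$ and $g\mapsto\rho(g)^\phi$ agree on generators) is the correct filling-in of that deduction. One slip there: the elements $x_r(t)$ generate $G_2(q)$, not $G_0={}^2G_2(q)$, which is only the fixed subgroup of $\tau$; the argument survives because your $\rho$ is the restriction of a representation of $G_2(q)$ and $\phi$ is defined on all of $G_2(q)$, so the identity established on $G_2(q)$ restricts to $G_0$.

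The step that does not hold up as written is the reverse inclusion in the Borel statement. From ``an upper triangular matrix in $\rho(G_0)$ stabilizes the standard complete flag'' you conclude that it ``lies in a Borel subgroup of $\rho(G_0)$''; but a solvable (or flag-stabilizing) subgroup of the finite group $G_0$ need not be contained in a Borel subgroup (the Sylow $2$-normalizer of order $168$ is a solvable subgroup lying in no Borel), and even if the element lay in some conjugate $B^g$ this would not place it in $B$ itself. Moreover, the phrase ``$B$ already realizes the full upper triangular intersection'' is precisely the assertion being proved, so the closing sentence is circular. The repair is short and uses material already in the paper: the set $U$ of upper triangular matrices in $\rho(G_0)$ is a subgroup containing $\rho(B)$; it is solvable, hence a proper subgroup since $G_0$ is simple and $\rho$ is faithful; and $B$ is maximal in $G_0$ by Lemma~\ref{max_subgps}, so $U=\rho(B)$. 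With that substitution your argument is complete and matches what the paper implicitly relies on from \cite{matrixgen}.
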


We next analyze the $G_0$-classes of unipotent elements, we begin with the following lemmas.

\begin{lem}\label{centuni}
	The centralizer of the elements in $Q$ are calculated as follows
	\begin{itemize}
		\item [\rm(i)] If $x\in Z(Q),$ then $C_{G_0}(x)=Q$, in particular, $|C_{G_0}(x)|=q^3$;
		\item [\rm(ii)] If $x \in Q'\backslash Z(Q),$ then $|C_{G_0}(x)|=2q^2$;
		\item [\rm(iii)] If $x\in Q\backslash Q',$ then $C_{G_0}(x)=\la x \ra Z(Q),$ in particular, $|C_{G_0}(x)|=3q$; 
	\end{itemize}
\end{lem}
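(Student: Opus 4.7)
The plan is to reduce the whole statement to centralizer computations inside the Borel subgroup $B = QH$ of Lemma~\ref{Borel}, and then to work out the three cases using the explicit product and conjugation formulas in Lemmas~\ref{Borel} and~\ref{uni}.

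The reduction rests on the well-known fact that $G_0 = {}^2G_2(q)$ acts $2$-transitively on $\Omega = [G_0 : B]$ of degree $q^3+1$, with $Q$ regular on $\Omega \setminus \{\omega_1\}$, where $\omega_1$ is the coset stabilized by $B$. Consequently every non-identity element of $Q$ fixes exactly $\omega_1$, and so the centralizer $C_{G_0}(x)$ of any $x \in Q \setminus \{1\}$ preserves $\Fix(x) = \{\omega_1\}$, giving $C_{G_0}(x) \leqs G_{\omega_1} = B$. It therefore suffices to compute $C_B(x)$ in each case.

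Writing a general element of $B$ as $q \cdot h(s)$ with $q = X_S(t',u',v') \in Q$ and $s \in \F^\times$, the condition $x^{qh(s)} = x$ unpacks via Lemma~\ref{uni}(iii) and the $H$-action rule $X_S(t,u,v)^{h(s)} = X_S(s^{3\theta-2}t, s^{1-3\theta}u, s^{-1}v)$ into three coordinate equations. For (i), with $x \in Z(Q) \setminus \{1\}$, these collapse to $s^{-1}v_0 = v_0$, forcing $s=1$ and leaving $q \in Q$ arbitrary, so $C_B(x) = Q$. For (ii), with $x = X_S(0,u_0,v_0)$ and $u_0 \neq 0$, matching the second coordinate gives $s^{3\theta-1} = 1$; using $\gcd(q-1,\,3\theta-1) = 3^{\gcd(2n+1,n+1)}-1 = 2$, the set of admissible $s$ is precisely $\{\pm 1\}$, and for each such $s$ the third-coordinate equation determines $t'$ uniquely while $u', v'$ range freely, giving $|C_B(x)| = 2q^2$. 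For (iii), with $x = X_S(t_0,u_0,v_0)$ and $t_0 \neq 0$, the first coordinate forces $s^{3\theta-2} = 1$; an arithmetic check (see below) shows $\gcd(q-1,\,3\theta-2) = 1$, hence $s = 1$ and $C_B(x) = C_Q(x)$. The remaining equations then yield $(t'/t_0)^{3\theta} = t'/t_0$, and since $\mathbb{F}_{3^{n+1}} \cap \F = \F_3$ (because $\gcd(n+1, 2n+1) = 1$), this restricts to $t' \in \{0, \pm t_0\}$; the second coordinate then fixes $u'$, while $v'$ is free, so $|C_Q(x)| = 3q$. Since $\la x \ra$ and $Z(Q)$ both centralize $x$ and their product has order $|\la x \ra|\cdot|Z(Q)|/|\la x^3\ra| = 9q/3 = 3q$, we conclude $C_{G_0}(x) = \la x \ra Z(Q)$.

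The main technical point beyond unpacking the formulas is the pair of arithmetic facts $\gcd(q-1, 3\theta - 1) = 2$ and $\gcd(q-1, 3\theta - 2) = 1$. The first is standard; for the second, if $d$ divides both $3^{2n+1}-1$ and $3^{n+1}-2$, then from $3^{n+1} \equiv 2 \pmod d$ one gets $3^{2n+2} \equiv 4 \pmod d$, while $3^{2n+2} = 3\cdot 3^{2n+1} \equiv 3 \pmod d$, so $d \mid 1$. The rest of the argument is routine case-by-case substitution into the formulas of Lemma~\ref{uni}.
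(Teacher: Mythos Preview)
Your proof is correct and follows essentially the same line as the paper's: reduce $C_{G_0}(x)$ to $C_B(x)$ (you do this via the $2$-transitive action on $[G_0:B]$ rather than citing \cite[Lemma~1]{Levchuk85}, which amounts to the same thing) and then solve the coordinate equations coming from Lemmas~\ref{Borel} and~\ref{uni}. One minor slip: in case~(iii) it is the \emph{third}-coordinate equation, not the second, that pins down $u'$ once $t'$ is fixed; apart from that the argument matches the paper's, and your direct parametrization in case~(ii) is in fact slightly more streamlined than the paper's detour through Hall's theorem.
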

\begin{proof}
	Let $B$ be the Borel subgroup of $G_0$ defined in Lemma~\ref{Borel}.
	By \cite[Lemma 1]{Levchuk85}, $C_{G_0}(x)=C_B(x).$
	
	Assume first $x\in Z(Q).$
	Then Lemma~\ref{Borel} implies that $ C_H(x)=1,$ yielding $C_{G_0}(x)=QC_H(x)=Q.$
	
	To see \rm(ii), note that  $C_Q(x)=Q'$ by Lemma~\ref{uni} \rm(iii) and a direct calculation.
	Thus, $C_{G_0}(x)=Q'{:}H_1$ for some $H_1\leqslant B$ with order coprime to $3$ by Hall's theorem. 
	Applying Hall's theorem again, there exists $q\in Q$ such that $H_1^q\leqslant H.$ 
	It follows that
	\begin{equation*}
		C_{G_0}(x^q)=C_{G_0}(x)^q=Q'H_1^q=Q'C_H(x^q).
	\end{equation*}
	On the other hand, $C_H(x^q)=\la h(-1) \ra$ by Lemma~\ref{Borel}, yielding \rm(ii).
	
	We finally assume $x\in Q\backslash Q'.$
	Note that $3\theta-2$ is coprime to $|\mathbb{F}_q^\times|=3\theta^2-1,$ it follows that $B/Q'\cong \mathrm{AGL}_1(q).$ 
	Thus, $C_{B/Q'}(xQ')=Q/Q',$ which implies $C_{G_0}(x)\leqslant Q.$
	Let $y\in C_Q(x)$ and assume $x=X_S(t,u,v)$ and $y=X_S(t',u',v').$
	Thus, $u=u-t(t')^{3\theta}+t't^{3\theta}$ by Lemma~\ref{uni} \rm(iii), this is equivalent to
	 \begin{equation*}
	 	-t(t')^{3\theta}+t't^{3\theta}=t^{3\theta+1}(\frac{t'}{t}-(\frac{t'}{t})^{3\theta})=0.
	 \end{equation*}
	Note that $t\in \mathbb{F}_q^{\times},$ it follows that $\frac{t'}{t}=(\frac{t'}{t})^{3\theta},$ yielding $t'\in \{0,t,-t\}.$
	Lemma~\ref{uni} \rm(iii) also implies 
	$v-t'u+tu'+t(t')^{3\theta+1}-t't^{3\theta+1}-t^2(t')^{3\theta}+(t')^2t^{3\theta}=v.$
	Thus, for each for each $t'\in \{0,t,-t\},$ there is a unique $u'\in \mathbb{F}_q$ such that $x^{X_S(t',u',v')}=x.$
	Therefore, $|C_{G_0}(x)|=|C_Q(x)|\leqslant 3q$.
	On the other hand, $\la x \ra Z(Q)\leqslant C_{G_0}(x)$ and $|\la x \ra Z(Q)|=3q$ by Lemma~\ref{uni} \rm(ii), this implies $C_{G_0}(x)=\la x \ra Z(Q).$
	This completes the proof of \rm(iii).
	\end{proof}

    \begin{lem}\label{conj9}
     Let $x=X_S(t,u,v)\in Q\backslash Q',$ then $x$ is conjugate to $x^{-1}$ in $G_0$ if and only if there exists $t'\in \mathbb{F}_q$ such that
     $u-t^{3\theta+1}=-t(t')^{3\theta}+t't^{3\theta}.$  
    \end{lem}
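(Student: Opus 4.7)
Write $x=X_S(t,u,v)$ with $t\in\mathbb{F}_q^\times$; by Lemma~\ref{uni}(i), $x^{-1}=X_S(-t,-u-t^{3\theta+1},-v-tu+t^{3\theta+2})$. My plan is first to show that any $g\in G_0$ with $x^g=x^{-1}$ must lie in the Borel subgroup $B$, and then to extract the stated condition via a direct coordinate computation inside $B$. For the reduction, observe that $g$ must normalize $C_{G_0}(x)=C_{G_0}(x^{-1})=\langle x\rangle Z(Q)$ (Lemma~\ref{centuni}(iii)), which is an abelian $3$-group of order $3q$ of exponent $9$. Its subgroup of elements of order dividing $3$ is characteristic, elementary abelian of order $q$, and contains $Z(Q)$; by order it coincides with $Z(Q)$. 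Hence $g$ normalizes $Z(Q)$. Because $Z(Q)$ is characteristic in $Q$, we have $B=N_{G_0}(Q)\leqs N_{G_0}(Z(Q))$, and by maximality of $B$ in $G_0$ (Lemma~\ref{Borel} and \cite{Levchuk85}) combined with simplicity of $G_0$ we conclude $N_{G_0}(Z(Q))=B$, so $g\in B$.

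Next write $g=X_S(t',u',v')h(s)$ with $s\in\mathbb{F}_q^\times$ (the unique decomposition coming from $B=Q\rtimes H$). Computing $x^g$ by applying Lemma~\ref{uni}(iii) and then the rule $X_S(a,b,c)^{h(s)}=X_S(s^{3\theta-2}a,\,s^{1-3\theta}b,\,s^{-1}c)$ from Lemma~\ref{Borel}, the first coordinate of $x^g$ equals $s^{3\theta-2}t$. Matching with $-t$ forces $s^{3\theta-2}=-1$; since a short Euclidean computation gives $\gcd(3\theta-2,q-1)=1$, the map $s\mapsto s^{3\theta-2}$ is a bijection on $\mathbb{F}_q^\times$, and $(-1)^{3\theta-2}=-1$ (as $3\theta-2$ is odd), so necessarily $s=-1$. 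Noting that $(-1)^{1-3\theta}=1$ (since $1-3\theta$ is even), the second coordinate of $x^g$ simplifies to $u-t(t')^{3\theta}+t't^{3\theta}$; setting this equal to the second coordinate $-u-t^{3\theta+1}$ of $x^{-1}$ and using $-2\equiv 1\pmod 3$ yields exactly
$$-t(t')^{3\theta}+t't^{3\theta}=u-t^{3\theta+1},$$
which is the condition of the lemma.

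Conversely, suppose $t'\in\mathbb{F}_q$ satisfies the displayed equation. The first two coordinates of $x^{X_S(t',u',v')h(-1)}$ then match those of $x^{-1}$ by the calculation above, independently of $u',v'$. The third-coordinate equation is linear in $u'$ with nonzero leading coefficient $-t$, hence determines $u'\in\mathbb{F}_q$ uniquely, while $v'$ remains free; the resulting element conjugates $x$ to $x^{-1}$. I expect the principal difficulty to be the reduction step: recognizing that $Z(Q)$ is precisely the characteristic subgroup $\Omega_1(C_{G_0}(x))$ and then using the maximality of $B$ to force $g\in B$. Once that structural fact is in place, the verification inside $B$ is a routine application of the multiplication rules already established in Lemmas~\ref{Borel} and~\ref{uni}.
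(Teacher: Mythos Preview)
Your proof is correct and follows essentially the same line as the paper: reduce the conjugating element to the Borel subgroup $B$, then extract the condition by a coordinate computation using Lemmas~\ref{Borel} and~\ref{uni}. The only real difference is in the reduction step: the paper simply quotes \cite[Lemma~1]{Levchuk85} to get $g\in N_{G_0}(\langle x\rangle)\leqslant N_{G_0}(Q)=B$, whereas you supply a self-contained argument by showing $Z(Q)=\Omega_1(C_{G_0}(x))$ is characteristic in $C_{G_0}(x)=\langle x\rangle Z(Q)$ and then invoking maximality of $B$ together with simplicity of $G_0$ to identify $N_{G_0}(Z(Q))=B$. Your route has the merit of being independent of the external reference; the paper's is shorter. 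The subsequent calculation inside $B$ (forcing $s=-1$ via the bijectivity of $s\mapsto s^{3\theta-2}$, then reading off the second-coordinate equation and solving the linear third-coordinate equation for $u'$) is organized slightly differently but is substantively identical to the paper's.
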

    \begin{proof}
     Assume there exists $g\in G_0$ such that $x^g=x^{-1}.$
     Then $g\in N_{G_0}(\la x \ra)\leqslant N_{G_0}(Q)=B$ by \cite[Lemma 1]{Levchuk85}, and hence $g=X_S(t',u',v')h(s^{-1})$ for some $t',u',v' \in \mathbb{F}_q$ and $s\in \mathbb{F}_q^{\times}.$
     Thus,
     \begin{equation*}
     	\begin{aligned}
     	x^{X_S(t',u',v')}&=X_S(t,u-t(t')^{3\theta}+(t')t^{3\theta},v-t'u+tu'+t(t')^{3\theta+1}-t't^{3\theta+1}-t^2(t')^{3\theta}+(t')^2t^{3\theta})\\
     	&=(x^{-1})^{h(s^{-1})}=X_S(-s^{2-3\theta}t,-s^{3\theta-1}(u+t^{3\theta+1}),s(-(v+tu)+t^{3\theta+2}))
     	\end{aligned}
     \end{equation*} 
    by Lemma~\ref{uni} \rm(iii) and Lemma~\ref{Borel}.
    Thus, $t=-s^{2-3\theta}t$ and $s^{2-3\theta}=-1,$ and hence $s=-1$ by noting that $2-3\theta$ is coprime to $|\mathbb{F}_q^{\times}|.$
    Therefore, $u-t^{3\theta+1}=-t(t')^{3\theta}+t't^{3\theta}.$
    
    On the other hand, if there exists $t'\in \mathbb{F}_q$ such that $u-t^{3\theta+1}=-t(t')^{3\theta}+t't^{3\theta}.$
    Then we take $u'\in \mathbb{F}_q$ such that 
    \begin{equation*}
    	v-t'u+tu'+t(t')^{3\theta+1}-t't^{3\theta+1}-t^2(t')^{3\theta}+(t')^2t^{3\theta}=v+tu-t^{3\theta+2},
    \end{equation*}
   we remark that $u'$ exists since $t\in \mathbb{F}_q^{\times}.$
   Therefore, $x^{X_S(t',u',v')}=(x^{-1})^{h(-1)},$ this completes the proof.
    \end{proof}

   We are now ready to determine the $G_0$-classes of unipotent elements.
   Recall the trace map from $\mathbb{F}_q$ to one of its subfield $\mathbb{F}_{q_0},$ where $q=q_0^r$
   $$\mathrm{Tr}_{\mathbb{F}_q/\mathbb{F}_{q_0}}(x)=x+x^{q_0}+\dots+x^{q_0^{r-1}}.$$
   We also recall the additive form of Hilbert 90, which says for any $s$ coprime to $r,$ there exists $\mu \in \mathbb{F}_q$ such that $\lambda=\mu^{q_0^s}-\mu$ if and only if $\mathrm{Tr}_{\mathbb{F}_q/\mathbb{F}_{q_0}}(\lambda)=0.$  
   
   \begin{lem}\label{conj_uni}
   	Let $u\in \mathbb{F}_q$ such that $\mathrm{Tr}_{\mathbb{F}_q/\mathbb{F}_3}(u-1)\neq 0,$ and let $x_1=X_S(1,u,0)\in Q.$
   	Let $x_2=X_S(1,1,0),$ $y=X_S(0,1,0)$ and $z=X_S(0,0,1)$ be three elements in $Q.$
   	Then 
   	\begin{equation*}
   		x_1^{G_0}, \, (x_1^{-1})^{G_0},\, x_2^{G_0},\, y^{G_0},\, (y^{-1})^{G_0},\, z^{G_0}
   	\end{equation*}
    are all ${G_0}$-classes of unipotent elements.
   \end{lem}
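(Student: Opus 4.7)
The plan is to verify two things: (a) the six listed elements represent pairwise distinct $G_0$-conjugacy classes, and (b) every non-identity unipotent element of $G_0$ is $G_0$-conjugate to one of them.

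For (a), Lemma~\ref{centuni} already separates the three strata $Z(Q)\setminus\{1\}$, $Q'\setminus Z(Q)$, $Q\setminus Q'$ via their centralizer orders $q^3$, $2q^2$, $3q$. So it remains only to distinguish $y$ from $y^{-1}$, and the candidates $x_1,x_1^{-1},x_2$ in $Q\setminus Q'$ from one another. A key preliminary is the TI property of Sylow $3$-subgroups of $G_0$, standard for rank~$1$ Lie type groups and in any case forced by the count carried out in step~(b). Once TI is in hand, $G_0$-conjugacy of non-identity elements inside $Q$ coincides with $B$-conjugacy, since $y_1^g=y_2$ with $y_1,y_2\in Q\setminus\{1\}$ forces $1\neq y_1\in Q\cap Q^{g^{-1}}$, hence $g\in N_{G_0}(Q)=B$.

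For $y$ against $y^{-1}$, I will compute the $B$-orbit of $y$ from Lemma~\ref{Borel} and Lemma~\ref{uni}(iii): conjugation by $X_S(t',u',v')$ sends $y$ to $X_S(0,1,-t')$, and then $h(s)$ sends this to $X_S(0,s^{1-3\theta},-s^{-1}t')$. Hence $y^{-1}=X_S(0,-1,0)\in y^B$ iff $-1\in\{s^{1-3\theta}:s\in\mathbb{F}_q^\times\}$. The standard identity gives $\gcd(3\theta-1,q-1)=\gcd(3^{n+1}-1,3^{2n+1}-1)=3^{\gcd(n+1,2n+1)}-1=2$, so this image is exactly the squares in $\mathbb{F}_q^\times$. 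Since $q=3^{2n+1}\equiv 3\pmod 4$, $-1$ is a non-square in $\mathbb{F}_q$, so $y\not\sim_{G_0}y^{-1}$.

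For $x_1,x_1^{-1},x_2$, Lemma~\ref{conj9} reduces conjugacy of $X_S(1,u^*,0)$ to its inverse to solvability of $u^*-1=t'-(t')^{3\theta}$. Since $\gcd(n+1,2n+1)=1$, the additive Hilbert~90 recalled earlier identifies the image of $t'\mapsto t'-(t')^{3\theta}$ with $\ker\mathrm{Tr}_{\mathbb{F}_q/\mathbb{F}_3}$, so $x_2\sim x_2^{-1}$ (the trace is $0$) while $x_1\not\sim x_1^{-1}$ (the trace is non-zero by hypothesis). To rule out $x_1\sim x_2$, I compute $B$-orbits on the slice $\{X_S(1,*,*)\}$: a short B\'ezout calculation using $2(3^{n+1}-2)-3(2\cdot 3^n-1)=-1$ yields $\gcd(3\theta-2,q-1)=1$, so only $h(1)$ preserves the slice, and $B$-conjugacy inside it reduces to $Q$-conjugacy, which by Lemma~\ref{uni}(iii) shifts the second coordinate precisely by $\ker\mathrm{Tr}$. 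Hence $x_1\sim_B x_2$ would give $\mathrm{Tr}_{\mathbb{F}_q/\mathbb{F}_3}(u-1)=0$, a contradiction. The same analysis applied to the canonical form $X_S(1,-u-1,u-1)$ for $x_1^{-1}$, obtained from Lemma~\ref{uni}(i) followed by conjugation by $h(-1)$, disposes of $x_1^{-1}\sim x_2$.

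Finally for (b), summing the class sizes gives $1+(q-1)(q^3+1)(1+q+q^2)=1+(q^3-1)(q^3+1)=q^6$, which by TI equals the total number of unipotent elements in $G_0$, since each of the $q^3+1$ Sylow $3$-subgroups contributes $q^3-1$ non-identity unipotents and these contributions are pairwise disjoint. Hence every unipotent element lies in one of the six listed classes. The main technical obstacle I foresee is the $B$-orbit bookkeeping in $Q\setminus Q'$, where the interplay between the exponents $1-3\theta$ and $3\theta-2$ and the trace-kernel image of $t'\mapsto t'-(t')^{3\theta}$ drives the separation of the three classes.
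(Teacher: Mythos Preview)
Your proof is correct, but it takes a genuinely different route from the paper's. The paper establishes only that $x_1\not\sim x_1^{-1}$ (via Lemma~\ref{conj9}) and $y\not\sim y^{-1}$ (via the same $s^{1-3\theta}\ne-1$ computation you give), uses Lemma~\ref{centuni} to separate the three strata, and then simply cites \cite[Table~22.2.7]{LS_uninil} to conclude that every unipotent element lies in one of these six classes; distinctness of $x_2$ from $x_1,x_1^{-1}$ is never argued directly but follows from that external class count. By contrast, you avoid the citation entirely: you separate $x_2$ from $x_1,x_1^{-1}$ by an explicit $B$-orbit computation on the slice $\{X_S(1,*,*)\}$ (using $\gcd(3\theta-2,q-1)=1$ and the trace-kernel description of $Q$-conjugacy shifts), and you establish completeness by summing the six class sizes to $q^6-1$ and matching this against the count of non-identity unipotents via TI. Your approach is longer but fully self-contained; the paper's is shorter but leans on the Liebeck--Seitz tables.

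One small caution: your parenthetical remark that TI is ``in any case forced by the count carried out in step~(b)'' is circular, since step~(b) itself uses TI to compute the total number of unipotents, and step~(a) uses TI (or equivalently \cite[Lemma~1]{Levchuk85}, as the paper cites) to reduce $G_0$-conjugacy to $B$-conjugacy. Your primary justification---that TI is standard for rank~1 groups---is fine and matches the paper's reliance on \cite{Levchuk85}, so this does not affect correctness, but you should drop the circular aside.
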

   \begin{proof}
   	Note that there does not exist $t'\in \mathbb{F}_q$ such that $u-1=t'-(t')^{3\theta},$ $x_1$ is not conjugate to $(x_1)^{-1}$ by Lemma~\ref{conj9}.
   	
   	We claim that $y$ is not conjugate to $y^{-1}.$ 
   	Assume the contrary there exists $g\in {G_0}$ such that $y^g=y^{-1}.$
   	Then $g\in N_{G_0}(\la y \ra)\leqslant N_{G_0}(Q)=B$ by \cite[Lemma 1]{Levchuk85}.
   	Thus, $g=X_S(t,u,v)h(s)$ for some $t,u,v \in \mathbb{F}_q$ and $s\in \mathbb{F}_q^\times$, and hence 
   	\begin{equation*}
   		X_S(0,1,0)^{X_S(t,u,v)h(s)}=X_S(0,s^{1-3\theta},s^{-1}(v-t'))=X_S(0,-1,0).
   	\end{equation*} 
   However, $s^{1-3\theta}\neq -1$ by noting that $-1$ is a non-square elements in $\mathbb{F}_q^\times,$ a contradiction. This proves the claim.
   
   Thus, $x_1, x_1^{-1}, x_2, y, y^{-1}$ and $z$ are pairwisely not conjugate to each other in $G_0$ by Lemma~\ref{centuni}. 
   Moreover, each unipotent element in $G_0$ is conjugate to one of $x_1, x_1^{-1}, x_2, y, y^{-1}$ and $z$ by \cite[Table 22.2.7]{LS_uninil}. This completes the proof.
   	\end{proof}

\subsection*{Subgroups}
We collect some known results of the subgroups of $G_0$ here.

\begin{lem}\cite[Page 5]{Levchuk85} \label{Hall}
	The group $G_0\cong {}^2G_2(q)$ has the following four cyclic Hall-subgroups
	\begin{itemize}
		\item [\rm(i)] $A_0\cong \CC_{\frac{q-1}{2}},$ and $N_{G_0}(A_0)\cong D_{2(q-1)}$;
		\item [\rm(ii)] $A_1\cong \frac{q+1}{4},$ and $N_{G_0}(A_1)=T{:}(A_1{:}\la t \ra),$ where $T\cong \CC_2^2,$ and $t$ is of order $6$;
		\item[\rm(iii)] $A_2\cong \CC_{q+1-\sqrt{3q}},$ and $N_{G_0}(A_2)\cong \CC_{q+1-\sqrt{3q}}{:}\CC_6$;
		\item[\rm(iv)] $A_3 \cong \CC_{q+1+\sqrt{3q}},$ and $N_{G_0}(A_3)\cong \CC_{q+1+\sqrt{3q}}{:}\CC_6$.
		\end{itemize}
\end{lem}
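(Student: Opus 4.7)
Since the lemma is cited verbatim from \cite{Levchuk85}, my plan is to sketch a structural proof rather than reproduce Levchuk's calculations. The four cyclic groups correspond to the four maximal tori of $G_0 = {}^2G_2(q)$, which arise from the four $F$-conjugacy classes of the Weyl group $W(G_2)$ under the Ree twist $F$ of Section~\ref{sec:ree}.

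First, I would factor $|G_0| = q^3(q-1)(q+1)(q^2-q+1)$. When $q = 3^{2n+1}$ the identity $q^2 - q + 1 = (q+1-\sqrt{3q})(q+1+\sqrt{3q})$ holds, and both factors are integers because $\sqrt{3q} = 3^{n+1}$; this gives the four torus orders $q-1$, $q+1$, $q+1-\sqrt{3q}$, $q+1+\sqrt{3q}$. The split torus $H$ of Lemma~\ref{Borel} already realizes the first. Using $q \equiv 3 \imod{4}$ one checks that $(q-1)/2$ and $(q+1)/4$ are odd, and that the four orders $(q-1)/2$, $(q+1)/4$, $q+1-\sqrt{3q}$, $q+1+\sqrt{3q}$ are pairwise coprime and each coprime to its index in $G_0$, so each $A_i$ is a Hall subgroup; its cyclic structure is inherited from the ambient maximal torus.

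Second, I would compute the four normalizers by identifying the Weyl component of each. For $A_0$, the Weyl reflection inverts the split torus, giving $N_{G_0}(A_0) \cong D_{2(q-1)}$ at once. For $A_2$ and $A_3$, the $F$-conjugacy classes indexing these anisotropic tori correspond to Weyl elements of order $6$, producing $\CC_6$ as the full Weyl quotient of the normalizer. For $A_1$, extra care is needed: the Sylow $2$-subgroup of $G_0$ is elementary abelian of order $8$ and is contained in the centralizer of the unique involution of $A_1$; the claimed structure $T{:}(A_1{:}\la t \ra)$ with $T \cong \CC_2^2$ and $|t| = 6$ follows from the fact that the involution centralizer in $G_0$ is isomorphic to $2 \times \PSL_2(q)$, inside which $N_{G_0}(A_1)$ sits as a dihedral-type normalizer of a cyclic Hall subgroup of $\PSL_2(q)$.

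The main obstacle would be the precise structure of $N_{G_0}(A_1)$: recovering the wreath-like extension requires both the involution-centralizer fact and the location of $A_1$ inside a dihedral subgroup of the $\PSL_2(q)$-factor, so that the $\CC_2^2$ and the order-$6$ complement $t$ can be pinned down explicitly. For this step I would either invoke Kleidman's classification of the maximal subgroups of ${}^2G_2(q)$ to read off $N_{G_0}(A_1)$ or cite Levchuk's own calculation directly.
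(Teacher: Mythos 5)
The paper offers no proof of this lemma at all: it is quoted directly from \cite[Page 5]{Levchuk85} (and is also visible in \cite[Table 8.43]{low-dim}), so your plan to cite Levchuk's calculation is consistent with what the paper actually does, and your order bookkeeping ($q^2-q+1=(q+1-\sqrt{3q})(q+1+\sqrt{3q})$ with $\sqrt{3q}=3^{n+1}$, $(q-1)/2$ and $(q+1)/4$ odd, pairwise coprimality, four classes of maximal tori) is correct as far as it goes.

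However, the one step you single out as the main obstacle is argued incorrectly. The group $N_{G_0}(A_1)$ does \emph{not} sit inside the involution centralizer $C_{G_0}(\eta)\cong \CC_2\times \PSL_2(q)$: inside that centralizer the normalizer of $A_1$ is only of ``$\CC_2\times D_{q+1}$'' type, of order $2(q+1)$, whereas $|N_{G_0}(A_1)|=|T|\cdot|A_1|\cdot 6=6(q+1)$. Indeed, by Lemma~\ref{max_subgps} both $C_{G_0}(\eta)$ and $N_{G_0}(A_1)$ are maximal subgroups of $G_0$, so neither can properly contain the other; the order-$6$ element $t$ induces an action of order $3$ on $T\cong \CC_2^2$, cyclically permuting its three involutions, so $N_{G_0}(A_1)$ centralizes no involution of $T$ and cannot be located inside a single involution centralizer containing $T$. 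The extra factor $3$ in the normalizer comes from the fusion in $G_0$ of the three involutions of $T$ (equivalently from the torus normalizer computation in \cite{Levchuk85} or Kleidman's list), not from a dihedral normalizer inside the $\PSL_2(q)$ factor. A related minor imprecision: the ambient maximal torus of order $q+1$ is $\CC_2^2\times \CC_{\frac{q+1}{4}}$, not cyclic (it cannot be, since Sylow $2$-subgroups of $G_0$ are elementary abelian by Lemma~\ref{normsyl2}), so the cyclicity of $A_1$ is not ``inherited'' from the torus; rather $A_1$ is its odd Hall part. With part (ii) repaired along these lines, or simply cited as the paper does, the statement stands.
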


\begin{lem}\cite[Table 8.43]{low-dim} \label{max_subgps}
	The maximal subgroups of $G_0$ are conjugate to one of the following subgroups:
	\begin{itemize}
		\item [\rm(i)] The Borel subgroup defined in \ref{Borel}.
		\item [\rm(ii)] $G_{G_0}(\eta)\cong \CC_2\times \PSL_2(q),$ where $\eta$ is an involution;
		\item [\rm(iii)] $C_{G_0}(\phi^r)\cong {}^2G_2(q_0),$ where $q=q_0^r$;
		\item [\rm(iv)] $N_G(A_i),$ where $1\leqslant i \leqslant 3$, and $A_i$ are the cyclic Hall subgroups defined in Lemma~\ref{Hall},
	\end{itemize}
and the maximal subgroups of $\Aut(G_0)$ are extension of the maximal subgroups of $G_0$ by field automorphisms.
\end{lem}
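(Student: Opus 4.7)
The plan is to prove the classification in two stages: first confirm each family (i)--(iv) is maximal in $G_0$, then show every maximal subgroup of $G_0$ is $G_0$-conjugate to one of them. For maximality of the Borel (i), self-normalization $B = N_{G_0}(Q)$ holds by Lemma~\ref{Borel}, and since $G_0$ acts doubly transitively on the coset space of $B$ of degree $q^3 + 1$, any proper overgroup of $B$ would be transitive on the remaining $q^3$ cosets, forcing equality with $G_0$. The involution centralizer (ii) is identified via the $7$-dimensional representation of Lemma~\ref{7dimrep}: an involution $\eta$ in $\rho(G_0)$ decomposes $\mathbb{F}_q^7$ into $\pm 1$-eigenspaces of shape $3+4$, and a direct computation inside $\GL_7(q)$ exhibits its centralizer as $\langle \eta \rangle \times L$ with $L \cong \PSL_2(q)$; maximality follows since any proper overgroup $M$ with $C_{G_0}(\eta) < M < G_0$ would over-count the $G_0$-class of $\eta$.

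For the subfield case (iii), ${}^2G_2(q_0) = C_{G_0}(\phi^r)$ where $q = q_0^r$ arises directly from the field-automorphism construction of $\phi$ in~\eqref{e:tau}; a proper overgroup strictly containing $C_{G_0}(\phi^r)$ must acquire unipotent elements realising all $G_0$-classes of Lemma~\ref{conj_uni}, forcing it to equal $G_0$ by a Bruhat-decomposition generation argument. For (iv), maximality of $N_{G_0}(A_i)$ follows from Lemma~\ref{Hall}: the Hall subgroup orders $|A_0|, |A_1|, |A_2|, |A_3|$ divide the pairwise coprime factors $(q-1)$, $(q+1)/2$ (up to the $2$-part), $q+1-\sqrt{3q}$, $q+1+\sqrt{3q}$ of $|G_0|$, so an overgroup $M > N_{G_0}(A_i)$ must contain either elements of order $3$ (placing $M$ in a Borel, incompatible with containing $A_i$ for $i \geq 1$) or elements from another Hall class (contradicting normalisation of $A_i$), forcing $M = G_0$.

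To show (i)--(iv) exhaust all maximals, take a maximal $M < G_0$ and split on whether $3$ divides $|M|$. If $3 \mid |M|$, then $M$ contains a non-trivial unipotent element, and by Lemma~\ref{centuni} a Sylow $3$-subgroup $P$ of $M$ is contained in a unique $G_0$-conjugate of $Q$; a Frattini-type argument places $M$ inside $N_{G_0}(Q) = B$, giving (i), unless $M$ is a subfield subgroup generated by unipotents defined over $\mathbb{F}_{q_0}$, giving (iii). If $3 \nmid |M|$, then $|M|$ divides $(q-1)(q+1)(q^2-q+1)$, and any prime $\ell > 2$ dividing $|M|$ locates $M$ inside the normaliser of the unique cyclic Hall subgroup $A_i$ (given by Lemma~\ref{Hall}) containing the element of order $\ell$; if $M$ has even order and does not lie in such a normaliser, the involution-centraliser analysis places $M$ in (ii). Finally, the extension to $\Aut(G_0) = G_0 \langle \phi \rangle$ follows by taking normalisers of each family in the cyclic extension and using that field automorphisms permute $G_0$-conjugacy classes of each type.

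The main obstacle will be the recognition step separating (i) from (iii) in the branch $3 \mid |M|$: one must show that a subgroup of order $|{}^2G_2(q_0)|$ containing unipotent elements not all lying in a single Borel is actually $G_0$-conjugate to $C_{G_0}(\phi^r)$ rather than some ``novelty'' configuration. The standard approach combines Steinberg's generation theorem for twisted rank-one groups with the action on the $7$-dimensional module of Lemma~\ref{7dimrep} to reconstruct the root-subgroup data and match it against a subfield embedding; a secondary technical point is verifying that no small-$q$ exceptions appear beyond those already reflected in the statement, which was carried out in Kleidman's original classification underlying \cite[Table~8.43]{low-dim}.
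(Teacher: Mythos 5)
The paper does not prove this lemma at all: it is quoted verbatim from \cite[Table 8.43]{low-dim}, which in turn rests on the Levchuk--Nuzhin/Kleidman classification of subgroups of ${}^2G_2(q)$ cited as \cite{Levchuk85}. So you are attempting to reprove a deep external result that the paper only invokes; that is already a mismatch in scope, and your sketch does not come close to a complete argument.

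More importantly, the exhaustiveness step contains a fatal structural error. Your dichotomy is: if $3\mid |M|$ then $M$ lands in a Borel subgroup or a subfield subgroup, and if $3\nmid|M|$ then $M$ lands in $N_{G_0}(A_i)$ or an involution centralizer. But \emph{every} subgroup in the list (i)--(iv) has order divisible by $3$: the involution centralizer $\CC_2\times\PSL_2(q)$ has order $q(q^2-1)$ with $q=3^{2n+1}$, and by Lemma~\ref{Hall} each $N_{G_0}(A_i)$ for $i\geqslant 1$ contains an element of order $6$, hence of order $3$. So cases (ii) and (iv) fall into your ``$3\mid|M|$'' branch, which you claim yields only (i) or (iii); the branch ``$3\nmid|M|$'' is essentially vacuous. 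The underlying false premise is that containing a nontrivial unipotent element forces containment in a unique conjugate of $Q$: by Lemma~\ref{centuni}, an element of $Q'\setminus Z(Q)$ has centralizer of order $2q^2$ and a subgroup of order $3$ generated by such an element lies in many Borel subgroups, so no Frattini-type argument of the kind you describe can work. Separately, several of your maximality claims are placeholders rather than proofs (``over-count the $G_0$-class of $\eta$'' for (ii), ``Bruhat-decomposition generation argument'' for (iii)), your maximality argument for $N_{G_0}(A_i)$ is self-contradictory for the same reason as above (these normalizers already contain elements of order $3$ yet are not in a Borel), and you do not impose the condition that $r$ be prime in (iii), without which $C_{G_0}(\phi^r)$ is not maximal. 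The honest course here is to cite the classification, as the paper does, rather than sketch a reproof.
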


\begin{lem}\cite[Page 5]{Levchuk85}\label{normsyl2}
	A Sylow $2$-subgroup of $G_0$ is elementary abelian of order $8$, and its index in normalizer is $21$.
\end{lem}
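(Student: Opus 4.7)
}
The plan is first to locate a Sylow $2$-subgroup inside an involution centralizer, and then to compute its normalizer by a transitive action on its seven involutions. Since $q = 3 \cdot 9^n$ and $9 \equiv 1 \imod{8}$, we have $q \equiv 3 \imod{8}$, so $(q-1)_2 = 2$ and $(q+1)_2 = 4$, while $q^3$ and $q^2-q+1$ are odd. From $|G_0| = q^3(q-1)(q+1)(q^2-q+1)$ this gives $|G_0|_2 = 8$. For the structure, pick an involution $\eta \in G_0$; by Lemma~\ref{max_subgps}(ii), $C_{G_0}(\eta) \cong C_2 \times \PSL_2(q)$. For $q \equiv 3 \imod{8}$ the Sylow $2$-subgroup of $\PSL_2(q)$ is dihedral of order $4$, i.e.\ a Klein four group, so a Sylow $2$-subgroup of $C_{G_0}(\eta)$ has shape $C_2 \times C_2^2 \cong C_2^3$. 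By Step~1 this is also a Sylow $2$-subgroup $P$ of $G_0$, proving the first assertion.

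For the normalizer, recall that the Klein four subgroup of $\PSL_2(q)$ is self-centralizing (its normalizer in $\PSL_2(q)$ is $A_4$, inside which the four group is its own centralizer). Consequently $C_{G_0}(P) = P$, so $N_{G_0}(P)/P$ embeds into $\Aut(P) \cong \GL_3(2)$. The key observation is that $G_0$ has a single conjugacy class of involutions; this forces $N_{G_0}(P)$ to act transitively on the seven involutions of $P$. The stabilizer of the chosen $\eta$ in this action equals $N_{G_0}(P) \cap C_{G_0}(\eta) = N_{C_{G_0}(\eta)}(P)$, and inside $C_2 \times \PSL_2(q)$ this normalizer is $C_2 \times A_4$, of order $24$; modulo $P$ this contributes a factor of $3$. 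The orbit-stabilizer formula then yields $[N_{G_0}(P) : P] = 7 \cdot 3 = 21$, as claimed.

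The main obstacle is justifying the uniqueness of the $G_0$-conjugacy class of involutions, since the lemma excerpt only records the shape of one involution centralizer. I would handle this by the following counting argument: from $|C_{G_0}(\eta)| = 2|\PSL_2(q)|$, any $G_0$-class of involutions has size $q^2(q^2-q+1)$, which when matched against $|G_0|$ and the fact (from Lemma~\ref{max_subgps}) that every involution centralizer is maximal of type $C_2 \times \PSL_2(q)$ forces a single class; equivalently, a second class would produce involution centralizers not in the maximal subgroup list. With that in hand, all seven involutions of $P$ are $G_0$-conjugate, the transitivity in Step~3 follows, and the proof is complete.
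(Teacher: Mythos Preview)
The paper does not prove this lemma; it simply records it as a citation to Levchuk--Nuzhin. Your argument therefore goes well beyond what the paper does, and the overall strategy (locate $P$ inside $C_{G_0}(\eta)\cong C_2\times\PSL_2(q)$, show $C_{G_0}(P)=P$, then get $[N_{G_0}(P):P]=7\cdot 3$ via the action on the seven involutions) is sound and efficient.

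Two points deserve tightening. First, the passage from ``all involutions of $P$ are $G_0$-conjugate'' to ``$N_{G_0}(P)$ is transitive on them'' is not automatic; you are implicitly invoking Burnside's fusion theorem (fusion in an \emph{abelian} Sylow subgroup is controlled by its normalizer), and it would be worth saying so explicitly.

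Second, your justification that $G_0$ has a single class of involutions does not quite work as written. Knowing that each involution centralizer has order $q(q^2-1)$ only tells you every class has the same size; the phrase ``matched against $|G_0|$'' yields no contradiction, and ``a second class would produce involution centralizers not in the maximal subgroup list'' is false --- a second class could perfectly well have centralizer isomorphic to $C_2\times\PSL_2(q)$ again. The clean fix is this: grant (from Lemma~\ref{max_subgps} or directly from Ward \cite{W_ree}) that every involution centralizer is isomorphic to $C_2\times\PSL_2(q)$, and observe that for $q>3$ the center of $C_2\times\PSL_2(q)$ has order $2$, so an involution is \emph{uniquely determined} by its centralizer. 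Since Lemma~\ref{max_subgps} gives a single $G_0$-conjugacy class of maximal subgroups of this shape, the involutions themselves form a single class. Alternatively, simply cite the single class of involutions as a known structural fact about ${}^2G_2(q)$ (it is used elsewhere in the paper, e.g.\ in the proof of Lemma~\ref{f:nq+1/4}). With either repair, your proof is complete.
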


\begin{lem}\cite[Lemma 2]{Levchuk85}\label{solgps}
 A soluble subgroup of $G_0$ is conjugate to a subgroup of the following groups
 \begin{equation*}
 	N_{G_0}(A_i), \, 0\leqslant i \leqslant 4, \quad N_{G_0}(S_p), \, p\in \{2,3\},
 \end{equation*} 
where $A_i, 0\leqslant i \leqslant 4$ are the cyclic Hall subgroups defined in Lemma~\ref{Hall}, and $S_p$ is a Sylow $p$-subgroups of $G_0.$
\end{lem}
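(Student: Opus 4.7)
The plan is to argue by induction on $q=3^{2n+1}$, combined with the classification of maximal subgroups of $G_0$ in Lemma~\ref{max_subgps}. Let $K\leqslant G_0$ be soluble; after replacing $K$ by a maximal soluble overgroup, we may assume $K$ is contained in some maximal subgroup $M$ of $G_0$. If $M$ itself is soluble, Lemma~\ref{max_subgps} forces $M$ to be the Borel subgroup $B=N_{G_0}(Q)=N_{G_0}(S_3)$ or $N_{G_0}(A_i)$ for some $i\in\{1,2,3\}$, and the conclusion is immediate. If $M\cong{}^2G_2(q_0)$ is a subfield maximal subgroup with $q=q_0^r$, the inductive hypothesis places $K$ inside a Hall-torus normalizer or Sylow-normalizer of $M$; the divisibility relations between the four Hall-torus orders of ${}^2G_2(q_0)$ and those of ${}^2G_2(q)$ coming from Lemma~\ref{Hall}, together with the containment of Sylow $2$- and $3$-subgroups of $M$ in those of $G_0$, yield the required inclusion after a suitable conjugation.

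The principal case is $M=C_{G_0}(\eta)\cong C_2\times \PSL_2(q)$ for an involution $\eta\in G_0$. The image of $K$ under the projection $M\to \PSL_2(q)$ is soluble, hence by Dickson's classification lies either in the Borel $[q]{:}C_{(q-1)/2}$, in a dihedral subgroup of order $q-1$ or $q+1$, or in $A_4$ or $S_4$. In each case the corresponding preimage in $\langle\eta\rangle\times \PSL_2(q)$ fits inside one of the listed subgroups of $G_0$. The Borel preimage lies in $N_{G_0}(S_3)$, since $\eta$ can be placed inside the Cartan $H$ of the Borel of $G_0$. The preimage of the dihedral of order $q-1$ has order $2(q-1)$, matching $|N_{G_0}(A_0)|$ exactly by Lemma~\ref{Hall}(i). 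The preimage of the dihedral of order $q+1$ is absorbed by $N_{G_0}(A_1)$, whose structure from Lemma~\ref{Hall}(ii) already carries a Klein four subgroup, a cyclic factor of order $(q+1)/4$, and an inverting element. Finally, $A_4$ and $S_4$ have orders dividing $24$, hence embed into $N_{G_0}(S_2)$ of order $168$ by Lemma~\ref{normsyl2}.

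The main obstacle is matching the maximal tori of $\PSL_2(q)$ with the Ree tori $A_i$ so that the case analysis above is unambiguous. This is handled by comparing orders of semisimple elements via Lemma~\ref{Hall}, using that each $N_{G_0}(A_i)$ contains a conjugate of $\eta$ (as the central involution of its $C_2$ or dihedral factor), and by noting that the centralizer of $\eta$ inside the normalizer of each Ree torus determines the matching uniquely; this reduces the bookkeeping to verifying that the split-torus image of the $\PSL_2(q)$-centralizer of $\eta$ corresponds to $A_0$ and the non-split-torus image corresponds to $A_1$.
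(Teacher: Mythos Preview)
The paper does not give its own proof of this lemma; it is simply quoted from \cite[Lemma~2]{Levchuk85}. So there is no in-paper argument to compare your proposal against.

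As a standalone sketch your strategy is reasonable---reduce to a maximal subgroup via Lemma~\ref{max_subgps}, handle the soluble maximal cases trivially, use induction for subfield subgroups, and use Dickson's list for the $C_2\times\PSL_2(q)$ case---but several steps are not yet proofs. First, $S_4$ never occurs: since $q=3^{2n+1}\equiv 3\pmod 8$, Dickson's theorem excludes $S_4\leqslant\PSL_2(q)$, which is fortunate because $N_{G_0}(S_2)\cong 2^3{:}7{:}3$ has elementary abelian Sylow $2$-subgroup and cannot contain $S_4$. Second, your ``matching orders'' arguments for the dihedral preimages do not by themselves give conjugacy in $G_0$; you still need to show, for instance, that the cyclic subgroup of order $(q-1)/2$ inside your $C_2\times D_{q-1}$ is $G_0$-conjugate into $A_0$, and likewise for the $(q+1)/4$ part into $A_1$. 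Your final paragraph acknowledges this is the crux but the resolution you offer (``the centralizer of $\eta$ inside the normalizer of each Ree torus determines the matching uniquely'') is an assertion, not an argument. Third, in the subfield case you assert that each Hall-torus order for ${}^2G_2(q_0)$ divides the appropriate one for ${}^2G_2(q)$; for $A_0$ and $A_1$ this is clear from $q_0\mp 1\mid q\mp 1$ when $r$ is odd, but the divisibilities for $q_0+1\pm\sqrt{3q_0}$ into the corresponding factors of $q^3+1$ depend on $r$ and need to be written out. Finally, the induction needs a base: for $q_0=3$ the group ${}^2G_2(3)\cong\PGammaL_2(8)$ is not covered by the statement, so the base case $q=27$ must be handled separately (or the subfield branch closed off differently).
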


\begin{lem}\cite[Lemma 6]{Levchuk85} \label{nonsol}
	An insoluble subgroup of $G_0$ is isomorphic to one of the following

\begin{equation*}
	\PSL_2(8), \PSL_2(q'), \eta\times \PSL_2(q'), {}^2G_2(q'),
\end{equation*}
where $\eta$ is an involution and $q=(q')^k$ for some integer $k.$
\end{lem}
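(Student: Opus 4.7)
The plan is to pick a maximal overgroup $M$ of the insoluble subgroup $H \leqslant G_0$ and combine the classification of maximal subgroups in Lemma~\ref{max_subgps} with Dickson's theorem on subgroups of $\PSL_2$ and induction on $|G_0|$.

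First, among the four families of maximal subgroups in Lemma~\ref{max_subgps}, both the Borel subgroup and the normalizers $N_{G_0}(A_i)$ are soluble: the Borel is $QH$ with $Q$ a $3$-group and $H$ cyclic, while each $N_{G_0}(A_i)$ is cyclic-by-small by Lemma~\ref{Hall}. Hence the maximal overgroup $M$ of the insoluble $H$ must be either an involution centralizer $C_{G_0}(\eta) \cong \CC_2 \times \PSL_2(q)$ or a subfield subgroup ${}^2G_2(q_0)$ with $q = q_0^r$ for some prime $r$ (necessarily odd, since $q=3^{2n+1}$).

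In the first case, let $\pi$ denote the projection $\CC_2 \times \PSL_2(q) \to \PSL_2(q)$. The kernel of $\pi|_H$ sits inside $\CC_2$, so $\pi(H)$ is an insoluble subgroup of $\PSL_2(q)$. Applying Dickson's theorem to $\PSL_2(3^{2n+1})$, the only insoluble candidates are the subfield subgroups $\PSL_2(3^m)$ for $m \mid 2n+1$: the group $A_5$ is excluded because $5 \nmid |G_0|$ (one checks $q \equiv \pm 2 \pmod 5$ and so $5 \nmid q^3(q-1)(q^3+1)$), and $\PGL_2(3^m)$ does not embed in $\PSL_2(q)$ since the index $[\mathbb{F}_q:\mathbb{F}_{3^m}] = (2n+1)/m$ is odd. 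Using the fact that $\PSL_2(3^m)$ is self-normalising in $\PSL_2(q)$, one then analyses $H \leqslant \CC_2 \times \PSL_2(q)$ according to whether $[H:H\cap(1\times\PSL_2(q))]$ equals $1$ or $2$; in the latter case a coset representative $(t,g)$ must satisfy $g \in N_{\PSL_2(q)}(H_0) = H_0$, which forces $(t,1) \in H$ and hence $H = \CC_2 \times H_0$. Thus $H \cong \PSL_2(q')$ or $\langle\eta\rangle \times \PSL_2(q')$ with $q' = 3^m$.

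For the second case, $H \leqslant {}^2G_2(q_0)$, I would argue by induction on $|G_0|$. When $q_0 > 3$, the induction hypothesis applied inside ${}^2G_2(q_0)$ gives that $H$ is of the listed form, and each type transports unchanged to ${}^2G_2(q)$ since $q = q_0^r$. For the base case $q_0 = 3$, one uses that ${}^2G_2(3) \cong \PSL_2(8){:}3$: any insoluble $H \leqslant {}^2G_2(3)$ has $H \cap \PSL_2(8)$ insoluble, and since $\PSL_2(8)$ has no proper insoluble subgroup (the only Dickson candidate $A_5$ is again ruled out as $5 \nmid 504$), necessarily $H \cap \PSL_2(8) = \PSL_2(8)$ and so $H \in \{\PSL_2(8),\, {}^2G_2(3)\}$. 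This is exactly what produces the separate $\PSL_2(8)$ entry in the statement.

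The main obstacle is the bookkeeping in the $\CC_2 \times \PSL_2(q)$ case, where one must verify that no unexpected $\CC_2$-extension of $\PSL_2(q')$ lies in the direct product, and the explicit handling of the base $q_0 = 3$ via $\PSL_2(8){:}3$. Both are essentially finite computations once the framework above is in place.
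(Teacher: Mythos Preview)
The paper does not prove this lemma at all; it simply quotes it from \cite[Lemma~6]{Levchuk85}. So there is no ``paper's proof'' to compare against, and your proposal supplies an argument where the paper gives none.

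Your argument is essentially correct. The reduction via Lemma~\ref{max_subgps} to the involution-centralizer and subfield cases is sound, as is the use of Dickson's theorem inside $\PSL_2(3^{2n+1})$: the exclusion of $A_5$ by $5\nmid|G_0|$ and of $\PGL_2(3^m)$ by odd index are both valid. In the direct-product analysis your notation slips slightly (you write $N_{\PSL_2(q)}(H_0)=H_0$ where you mean $N_{\PSL_2(q)}(\pi(H_0))=\pi(H_0)$), and the self-normalising fact is not strictly needed: since $\PSL_2(q')$ is simple it has no index-$2$ subgroup, so $\pi(H_0)=\pi(H)$ automatically, and then $(t,g)(1,g)^{-1}=(t,1)\in H$ follows directly. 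The inductive treatment of the subfield case, with the base case ${}^2G_2(3)\cong\PSL_2(8){:}3$ handled by noting $\PSL_2(8)$ has no proper insoluble subgroup, is fine and correctly accounts for the separate $\PSL_2(8)$ entry. One small caveat: Lemma~\ref{max_subgps} is itself quoted from the literature, and historically the maximal-subgroup and insoluble-subgroup classifications for ${}^2G_2(q)$ were developed together, so if you intend this as a genuinely independent proof you should check that the source you rely on for Lemma~\ref{max_subgps} does not in turn invoke \cite[Lemma~6]{Levchuk85}.
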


\section{Some property of finite field $\mathbb{F}_q$}\label{p:field}
In this section, we record some properties of finite field $\mathbb{F}_q.$
In particular, we study a family of additive subgroups of $\mathbb{F}_q$ and their intersection.
We also state some more properties of trace maps of finite fields.

Let $A$ be the set $\{\alpha-\alpha^{3\theta}\mid \alpha\in \mathbb{F}_q\},$ then $A$ is the kernel of the trace map from $\mathbb{F}_q$ to $\mathbb{F}_3$ by additive form of Hilbert 90.
Let $tA=\{ta\mid a\in A\},$ then $tA$ is an $\mathbb{F}_3$-subspace of $\mathbb{F}_q.$
Assume $T$ is a subset of $\mathbb{F}_q^\times.$
We study the intersection $\bigcap\limits_{t\in T}tA.$
This will be useful when we study a subgroup of $Q$ that intersects non-trivially with a unique $G$-class of elements of order $9$ in Section~\ref{sec:proof}.   

We first make an observation in the following lemma.
\begin{lem}\label{lem:intA}
	Let $T^{-1}=\{t^{-1}\mid t\in T\},$ and let $\mathrm{span}_{\mathbb{F}_3}(T^{-1})$ be the $\mathbb{F}_3$-subspace of $\mathbb{F}_q$ generated by all elements in $T^{-1}.$
	Then 
	\begin{equation}\label{e:intA}
	\bigcap_{t\in T}tA=\bigcap_{t\in \mathrm{span}_{\mathbb{F}_3}(T^{-1})}t^{-1}A.
	\end{equation}
\end{lem}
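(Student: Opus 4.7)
The plan is to observe that $A$, being the kernel of the $\mathbb{F}_3$-linear trace map $\Tr_{\mathbb{F}_q/\mathbb{F}_3}$, is an $\mathbb{F}_3$-subspace of $\mathbb{F}_q$, and then to translate membership in either side of \eqref{e:intA} into a single $\mathbb{F}_3$-linear condition on $x$ that can be read off the $\mathbb{F}_3$-span of $T^{-1}$.

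First I would fix $x \in \mathbb{F}_q$ and introduce the auxiliary set
\[
B_x := \{u \in \mathbb{F}_q : ux \in A\}.
\]
Since $A$ is an $\mathbb{F}_3$-subspace and multiplication by $x$ is $\mathbb{F}_3$-linear, $B_x$ is itself an $\mathbb{F}_3$-subspace of $\mathbb{F}_q$. The membership $x \in \bigcap_{t \in T} tA$ rewrites, via the equivalence $x \in tA \Leftrightarrow t^{-1}x \in A$, as the containment $T^{-1} \subseteq B_x$. Interpreting the right-hand intersection as ranging over the nonzero elements of $\mathrm{span}_{\mathbb{F}_3}(T^{-1})$ (the expression $t^{-1}A$ being undefined at $t=0$), the membership of $x$ in the right-hand side becomes $\mathrm{span}_{\mathbb{F}_3}(T^{-1}) \setminus \{0\} \subseteq B_x$, which is the same as $\mathrm{span}_{\mathbb{F}_3}(T^{-1}) \subseteq B_x$ because $0 \in B_x$ automatically.

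Second, I would invoke the universal property of $\mathbb{F}_3$-spans: a subset of $\mathbb{F}_q$ lies in an $\mathbb{F}_3$-subspace if and only if its $\mathbb{F}_3$-span does. Applied to the subspace $B_x$, this yields $T^{-1} \subseteq B_x \Longleftrightarrow \mathrm{span}_{\mathbb{F}_3}(T^{-1}) \subseteq B_x$. Combining with the previous step, both sides of \eqref{e:intA} are defined by the same condition on $x$, which gives the equality at once.

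There is no real obstacle; the only care needed is to read the right-hand intersection over the nonzero part of $\mathrm{span}_{\mathbb{F}_3}(T^{-1})$ (since $0$ has no inverse) and to keep the inversion between $T$ and $T^{-1}$ straight when applying linearity.
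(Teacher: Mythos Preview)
Your argument is correct and considerably cleaner than the paper's. You exploit directly that $A=\ker\Tr_{\mathbb{F}_q/\mathbb{F}_3}$ is an $\mathbb{F}_3$-subspace, so that for each fixed $x$ the set $B_x=\{u:ux\in A\}$ is an $\mathbb{F}_3$-subspace; the two intersections then read as $T^{-1}\subseteq B_x$ and $\mathrm{span}_{\mathbb{F}_3}(T^{-1})\subseteq B_x$, which are equivalent. The paper instead proceeds by induction on $|T|$: for $|T|=2$ it computes $tA\cap t_1A$ explicitly, obtaining the parametrisation
\[
tA\cap t_1A=\Bigl\{t(\alpha-\alpha^{3\theta})\;\Big|\;\alpha^{3\theta}\in \tfrac{1}{(t/t_1)^{3\theta}-t/t_1}\,A\Bigr\},
\]
and then inducts. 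Your route avoids all of this and handles the zero-element issue in the span cleanly.

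One caveat worth flagging: the explicit formula above (equation~\eqref{e:int2A} in the paper) is not just scaffolding for Lemma~\ref{lem:intA}; it is quoted verbatim in the proof of the next result, Lemma~\ref{lem:dimintA}, where the dimension of $\bigcap_{t\in T}tA$ is computed. So if you replace the paper's proof by yours, you will also need to supply that description separately (or rework the inductive step of Lemma~\ref{lem:dimintA}), since your argument, while proving the equality of the two intersections, does not produce the parametrisation the paper later relies on.
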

\begin{proof}
	It is clear that $\bigcap\limits_{t\in \mathrm{span}_{\mathbb{F}_3}(T^{-1})}t^{-1}A \subseteq \bigcap\limits_{t\in T}tA,$ and hence it suffices to show $\bigcap\limits_{t\in T}tA \subseteq \bigcap\limits_{t\in \mathrm{span}_{\mathbb{F}_3}(T^{-1})}t^{-1}A.$ 
	We prove it by induction on $|T|.$
	
	If $T$ has a unique element, then (\ref{e:intA}) holds immediately.
	
	Assume $T=\{t,t_1\}.$ If $t=-t_1$, then (\ref{e:intA}) holds immediately.
	We thus assume $t_1\notin \{\pm t\}.$
	Then 
	\begin{equation}\label{e:int2A}
		\begin{aligned}
		tA\cap t_1A &=\{t(\alpha-\alpha^{3\theta})\mid \alpha\in \mathbb{F}_q\}\cap t_1A\\
		&=\{t_1(\frac{t}{t_1}\alpha-\frac{t}{t_1}\alpha^{3\theta})\mid \alpha\in \mathbb{F}_q\}\cap t_1A\\
		&=\{t_1(\frac{t}{t_1}\alpha-\frac{t^{3\theta}}{t_1^{3\theta}}\alpha^{3\theta}+\alpha^{3\theta}(\frac{t^{3\theta}}{t_1^{3\theta}}-\frac{t}{t_1}))\mid \alpha \in \mathbb{F}_q\}\cap t_1A\\
		&=\{t(\alpha-\alpha^{3\theta})\mid \alpha^{3\theta}\in \frac{1}{(\frac{t}{t_1})^{3\theta}-\frac{t}{t_1}}A \},
		\end{aligned}
	\end{equation} 
    the last equality holds by noting that $t_1(\frac{t}{t_1}\alpha-\frac{t^{3\theta}}{t_1^{3\theta}}\alpha^{3\theta})\in t_1A.$
    On the other hand, by (\ref{e:int2A}), for any $a,b\in \mathbb{F}_3^\times=\{\pm 1\}$ we have
    \begin{equation*}
    	\begin{aligned}
    	tA\cap \frac{1}{\frac{a}{t_1}+\frac{b}{t}}A&=\{t(\alpha-\alpha^{3\theta})\mid \alpha^{3\theta}\in \frac{1}{t^{3\theta}(\frac{a}{t_1}+\frac{b}{t})^{3\theta}-t(\frac{a}{t_1}+\frac{b}{t})}A \}\\
    	&=\{t(\alpha-\alpha^{3\theta})\mid \alpha^{3\theta}\in \frac{1}{(\frac{t}{t_1})^{3\theta}-\frac{t}{t_1}}A \}=tA\cap t_1A.
    \end{aligned}
    \end{equation*}
    Therefore, $tA\cap t_1A\subseteq \frac{1}{\frac{a}{t_1}+\frac{b}{t}}A$, and (\ref{e:intA}) holds.
    
    Suppose (\ref{e:intA}) holds when $|T|\leqslant k-1,$ where $k\geqslant 3,$ we next prove it for $|T|=k.$ 
    We thus assume $T=\{t_1,t_2,\dots, t_k\}.$ 
    Let $\lambda$ be an element in $\mathrm{span}_{\mathbb{F}_3}(T^{-1}),$ we need to show $\bigcap\limits_{t\in T}tA\subseteq \frac{1}{\lambda}A.$
    Indeed, if $\lambda\in \mathrm{span}_{\mathbb{F}_3}\{t_1^{-1},t_2^{-1},\dots, t_{k-1}^{-1}\},$ then $$\bigcap_{t\in T}tA\subseteq \bigcap_{i=1}^{k-1}t_iA \subseteq \frac{1}{\lambda}A $$ 
    by induction. 
    We thus may assume $\lambda\notin \mathrm{span}_{\mathbb{F}_3}\{t_1^{-1}, t_2^{-1}, \dots, t_{k-1}^{-1}\},$ then there exists $1 \leqslant i_1, i_2, \dots, i_j \leqslant k-1$ and $a_1, a_2, \dots, a_{j+1}\subseteq \mathbb{F}_3^\times,$ such that $\lambda=a_1t_{i_1}^{-1}+\dots +a_jt_{i_j}^{-1}+a_{j+1}t_k^{-1}.$
    Thus,
    \begin{equation*}
    	\begin{aligned}
    	\bigcap_{t\in T}tA &\subseteq (\bigcap_{i=1}^{j}t_{i_j}A) \cap t_kA\\
    	&\subseteq \frac{1}{a_1t_{i_1}^{-1}+\dots +a_jt_{i_j}^{-1}}A \cap t_kA \subseteq \frac{1}{\lambda}A, 
    	\end{aligned}
    \end{equation*}
 by induction. 
 Therefore, $\bigcap\limits_{t\in T}tA \subseteq \bigcap\limits_{t\in \mathrm{span}_{\mathbb{F}_3}(T^{-1})}t^{-1}A$ since $\lambda$ is taken arbitrarily in $\mathrm{span}_{\mathbb{F}_3}(T^{-1}).$
 This completes the proof.
	\end{proof}

Recall that $q=3^{2n+1}$ and $\mathbb{F}_q$ is a vector space over $\mathbb{F}_3$ of dimension $2n+1,$ and $\bigcap\limits_{t\in T}tA$ is an $\mathbb{F}_3$-subspace of $\mathbb{F}_q.$  The following lemma describes the size of $\bigcap\limits_{t\in T}tA.$
\begin{lem}\label{lem:dimintA}
	\begin{equation}\label{dimintA}
		\mathrm{dim}_{\mathbb{F}_3}(\bigcap_{t\in T}tA)+\mathrm{dim}_{\mathbb{F}_3}(\mathrm{span}_{\mathbb{F}_3}(T^{-1}))=2n+1
	\end{equation}
\end{lem}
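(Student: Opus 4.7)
The plan is to reinterpret the intersection $\bigcap_{t\in T} tA$ as the orthogonal complement of an $\mathbb{F}_3$-subspace under the trace bilinear form on $\mathbb{F}_q$, and then apply the standard dimension formula for orthogonal complements with respect to a non-degenerate form. This sidesteps the need to track cancellations carefully and makes Lemma~\ref{lem:intA} play the role of the (classical) fact that perpendicular complements depend only on the span.

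First, by the additive form of Hilbert~90 recalled before Lemma~\ref{conj_uni}, $A$ is exactly the kernel of the trace map $\Tr := \Tr_{\mathbb{F}_q/\mathbb{F}_3}$. Hence for each $t\in \mathbb{F}_q^\times$, an element $y\in \mathbb{F}_q$ lies in $tA$ if and only if $\Tr(t^{-1}y)=0$. Introducing the symmetric $\mathbb{F}_3$-bilinear form
\[
\langle x,y\rangle := \Tr(xy),
\]
this says $tA=\{y\in\mathbb{F}_q : \langle t^{-1},y\rangle = 0\}$, and therefore
\[
\bigcap_{t\in T} tA = \{y\in \mathbb{F}_q : \langle s,y\rangle = 0 \text{ for all } s\in T^{-1}\} = V^{\perp},
\]
where $V:=\mathrm{span}_{\mathbb{F}_3}(T^{-1})$ and the last equality uses $\mathbb{F}_3$-linearity of $\langle\cdot,\cdot\rangle$ in the first slot.

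Second, I would invoke non-degeneracy of $\langle\cdot,\cdot\rangle$: since $\Tr\colon \mathbb{F}_q\to \mathbb{F}_3$ is a surjective $\mathbb{F}_3$-linear functional and the extension $\mathbb{F}_q/\mathbb{F}_3$ is separable, the form has trivial radical. The standard dimension formula for a non-degenerate bilinear form on a finite-dimensional vector space then gives
\[
\dim_{\mathbb{F}_3}(V^{\perp}) + \dim_{\mathbb{F}_3}(V) \;=\; \dim_{\mathbb{F}_3}(\mathbb{F}_q) \;=\; 2n+1,
\]
which, combined with the previous paragraph, yields the claim.

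There is no real obstacle here: once the trace pairing is in play the statement reduces to standard linear algebra. If one wishes to avoid appealing to non-degeneracy as a black box, one can proceed constructively using Lemma~\ref{lem:intA}: choose an $\mathbb{F}_3$-basis $\{s_1,\dots,s_k\}$ of $V$ with $k = \dim_{\mathbb{F}_3}V$, and show inductively that each successive hyperplane $\ker(y\mapsto \Tr(s_{j+1}y))$ cuts $\bigcap_{i\le j}\ker(y\mapsto \Tr(s_i y))$ strictly, because the linear functionals $y\mapsto \Tr(s_iy)$ are $\mathbb{F}_3$-linearly independent (being the images of a basis of $V$ under the injective map $s\mapsto \Tr(s\,\cdot\,)$ from $\mathbb{F}_q$ to its $\mathbb{F}_3$-dual). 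Either route delivers the identity \eqref{dimintA}.
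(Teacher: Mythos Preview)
Your proof is correct and considerably more streamlined than the paper's. The key identification $tA=\{y:\Tr(t^{-1}y)=0\}$ is valid since $A=\ker\Tr$, and non-degeneracy of the trace form is standard, so the dimension formula drops out immediately.

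The paper proceeds quite differently. It argues by induction on $k=\dim_{\mathbb{F}_3}(\mathrm{span}_{\mathbb{F}_3}(T^{-1}))$, normalising so that $t_1=1\in T$, and then uses the explicit computation from the proof of Lemma~\ref{lem:intA} (namely $A\cap t_iA=\{\alpha-\alpha^{3\theta}:\alpha^{3\theta}\in ((1/t_i)^{3\theta}-1/t_i)^{-1}A\}$) to rewrite $\bigcap_{i=1}^k t_iA$ as the image, under $\alpha\mapsto\alpha-\alpha^{3\theta}$, of an intersection of $k-1$ translates of $A$. One checks that the corresponding inverses span a space of dimension $k-1$, applies the inductive hypothesis, and accounts for the kernel $\mathbb{F}_3$ of $\alpha\mapsto\alpha-\alpha^{3\theta}$ sitting inside the preimage. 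This is correct but noticeably more computational.

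What each route buys: your argument reveals that both Lemma~\ref{lem:intA} and Lemma~\ref{lem:dimintA} are instances of the single fact that $\bigcap_{t\in T}tA=(\mathrm{span}_{\mathbb{F}_3}(T^{-1}))^{\perp}$ under the trace pairing, and indeed your proof does not need Lemma~\ref{lem:intA} at all. The paper's approach, by contrast, works directly with the defining expression $A=\{\alpha-\alpha^{3\theta}\}$ rather than invoking non-degeneracy of the trace form as an external input; this keeps the argument closer to the explicit calculations used later in Section~\ref{sec:proof}, at the cost of a longer induction.
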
    
\begin{proof}
	We prove (\ref{dimintA}) by induction on the dimension of $\mathrm{span}_{\mathbb{F}_3}(T^{-1})$ over $\mathbb{F}_3.$
	If $\mathrm{span}_{\mathbb{F}_3}(T^{-1})$ is of dimension $1,$ then $T$ is contained in a one-dimensional $\mathbb{F}_3$ subspace, and hence (\ref{dimintA}) holds immediately.
	
	We now assume (\ref{dimintA}) holds when $\mathrm{dim}_{\mathbb{F}_3}(\mathrm{span}_{\mathbb{F}_3}(T^{-1}))\leqslant k-1,$ and prove it when $\mathrm{dim}_{\mathbb{F}_3}(\mathrm{span}_{\mathbb{F}_3}(T^{-1}))=k.$
	We thus assume $\{t_1^{-1}, t_2^{-1}, \dots, t_k^{-1}\}$ is a basis for $T^{-1},$ where $t_1, t_2, \dots t_k$ are $k$ distinct elements in $T.$
	Note that $$\mathrm{dim}_{\mathbb{F}_3}(\bigcap\limits_{t\in T}tA)=\mathrm{dim}_{\mathbb{F}_3}(\bigcap\limits_{t\in t_0T}tA)$$
	for any $t_0\in \mathbb{F}_q$, we may replace $T$ with $t_0T$ for a suitable $t_0\in \mathbb{F}_q,$ and assume further $t_1=1\in T.$
	Then 
	\begin{equation*}
		\bigcap_{t\in T}tA=\bigcap_{i=1}^{k}t_iA=\bigcap_{i=2}^{k}(A\cap t_iA),
	\end{equation*}
   by Lemma~\ref{lem:intA}.
   Note that
   \begin{equation*}
   	\begin{aligned}
   	\bigcap_{i=2}^{k}(A\cap t_iA)&=\bigcap_{i=2}^{k}\{\alpha-\alpha^{3\theta}\mid \alpha^{3\theta}\in \frac{1}{(\frac{1}{t_i})^{3\theta}-\frac{1}{t_i}}A\}\\
   	 &=\{\alpha-\alpha^{3\theta}\mid \alpha^{3\theta}\in \bigcap_{i=2}^k\frac{1}{(\frac{1}{t_i})^{3\theta}-\frac{1}{t_i}}A \}
   	 \end{aligned}
   \end{equation*}
   by (3) in the proof of Lemma~\ref{lem:intA}.
   We also note that $1\notin \mathrm{span}_{\mathbb{F}_3}\{t_2^{-1},\dots, t_k^{-1}\},$ thus the linear map which sends each $\lambda\in \mathbb{F}_q$ to $\lambda-\lambda^{3\theta}$ with kernel $\mathbb{F}_3$ is injective on $\mathrm{span}_{\mathbb{F}_3}\{t_2^{-1},\dots, t_k^{-1}\},$ yielding 
   \begin{equation*}
   	\mathrm{dim}_{\mathbb{F}_3}(\mathrm{span}_{\mathbb{F}_3}\{(\frac{1}{t_2})^{3\theta}-\frac{1}{t_2},(\frac{1}{t_3})^{3\theta}-\frac{1}{t_3},\dots, (\frac{1}{t_k})^{3\theta}-\frac{1}{t_k}\})=k-1.
   \end{equation*}
   On the other hand, $(\frac{1}{t_i})^{3\theta}-\frac{1}{t_i}\in A$ for each $2\leqslant i \leqslant k,$ thus $$\{\alpha\mid \alpha-\alpha^{3\theta}=0 \}=\mathbb{F}_3\subseteq \bigcap\limits_{i=2}^{k}\frac{1}{(\frac{1}{t_i})^{3\theta}-\frac{1}{t_i}}A.$$
   Therefore, 
   \begin{equation*}
   	\mathrm{dim}_{\mathbb{F}_3}(\bigcap_{i=1}^{k}t_iA)=\mathrm{dim}_{\mathbb{F}_3}(\bigcap\limits_{i=2}^{k}\frac{1}{(\frac{1}{t_i})^{3\theta}-\frac{1}{t_i}}A)-1=2n+1-k
   \end{equation*} 
   by induction. This completes the proof.
	\end{proof}

The following lemma will be useful in Section~\ref{sec:proof} when we prove Theorem~\ref{thm:main}.
\begin{lem}\label{trqq1}
	Let $q=q_0^r$ with $r\neq 3$ an odd prime, then for any subfield $\mathbb{F}_{q_1}$ of $\mathbb{F}_q$, there exists $\lambda \in \mathbb{F}_{q_0}$ such that  $\mathrm{Tr}_{\mathbb{F}_q/\mathbb{F}_{q_1}}(\lambda)\in \mathbb{F}_{q_1}^\times.$
	In particular, there exists $\lambda_1\in \mathbb{F}_{q_0}$ such that $\mathrm{Tr}_{\mathbb{F}_q/\mathbb{F}_{q_1}}(\lambda_1)=1.$
\end{lem}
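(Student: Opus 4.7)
The plan is to apply transitivity of the trace across the subfield lattice to reduce $\mathrm{Tr}_{\mathbb{F}_q/\mathbb{F}_{q_1}}$ restricted to $\mathbb{F}_{q_0}$ to a non-zero scalar multiple of $\mathrm{Tr}_{\mathbb{F}_{q_0}/\mathbb{F}_{q_2}}$, where $\mathbb{F}_{q_2}:=\mathbb{F}_{q_0}\cap\mathbb{F}_{q_1}$. Because the latter trace is surjective (a standard property of traces of separable extensions), the restricted map will surject onto $\mathbb{F}_{q_2}\subseteq\mathbb{F}_{q_1}$, and the ``in particular'' statement will then follow by rescaling with an element of $\mathbb{F}_{q_2}^\times\subseteq\mathbb{F}_{q_0}^\times$.

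Concretely, let $L:=\mathbb{F}_{q_0}\mathbb{F}_{q_1}$ be the compositum inside $\mathbb{F}_q$. Since $\mathrm{Gal}(\mathbb{F}_q/\mathbb{F}_3)$ is cyclic, subfields of $\mathbb{F}_q$ are determined by divisibility of degrees, giving
\[
[\mathbb{F}_q:L] \;=\; \gcd\bigl([\mathbb{F}_q:\mathbb{F}_{q_0}],[\mathbb{F}_q:\mathbb{F}_{q_1}]\bigr) \;=\; \gcd\bigl(r,[\mathbb{F}_q:\mathbb{F}_{q_1}]\bigr),
\]
which lies in $\{1,r\}$ as $r$ is prime. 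Transitivity of the trace then yields, for every $\lambda\in\mathbb{F}_{q_0}\subseteq L$,
\[
\mathrm{Tr}_{\mathbb{F}_q/\mathbb{F}_{q_1}}(\lambda) \;=\; \mathrm{Tr}_{L/\mathbb{F}_{q_1}}\!\bigl(\mathrm{Tr}_{\mathbb{F}_q/L}(\lambda)\bigr) \;=\; [\mathbb{F}_q:L]\cdot\mathrm{Tr}_{L/\mathbb{F}_{q_1}}(\lambda),
\]
since $\mathrm{Tr}_{\mathbb{F}_q/L}$ acts on $L$ as multiplication by $[\mathbb{F}_q:L]$. The restriction map $\mathrm{Gal}(L/\mathbb{F}_{q_1})\to\mathrm{Gal}(\mathbb{F}_{q_0}/\mathbb{F}_{q_2})$ is an isomorphism (the standard Galois fact for a compositum, immediate here from generation by Frobenius on the cyclic extension $\mathbb{F}_q/\mathbb{F}_3$), so $\mathrm{Tr}_{L/\mathbb{F}_{q_1}}(\lambda)=\mathrm{Tr}_{\mathbb{F}_{q_0}/\mathbb{F}_{q_2}}(\lambda)$ for $\lambda\in\mathbb{F}_{q_0}$.

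Combining the two identities, on $\mathbb{F}_{q_0}$ one has $\mathrm{Tr}_{\mathbb{F}_q/\mathbb{F}_{q_1}}(\lambda)=[\mathbb{F}_q:L]\cdot\mathrm{Tr}_{\mathbb{F}_{q_0}/\mathbb{F}_{q_2}}(\lambda)$, and this is precisely the step that needs the hypothesis $r\neq 3$: the scalar $[\mathbb{F}_q:L]\in\{1,r\}$ is non-zero modulo $3$, hence invertible in $\mathbb{F}_3\subseteq\mathbb{F}_{q_2}$, so the restricted map surjects onto $\mathbb{F}_{q_2}$. Choosing $\lambda\in\mathbb{F}_{q_0}$ with image $\mu\in\mathbb{F}_{q_2}^\times$ and setting $\lambda_1:=\mu^{-1}\lambda$ (which remains in $\mathbb{F}_{q_0}$ since $\mu^{-1}\in\mathbb{F}_{q_2}\subseteq\mathbb{F}_{q_0}$) yields $\mathrm{Tr}_{\mathbb{F}_q/\mathbb{F}_{q_1}}(\lambda_1)=1$. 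No serious obstacle is anticipated; the only point deserving care is the compositum-Galois identification, but for finite-field extensions this is completely governed by Frobenius and reduces to elementary gcd/lcm arithmetic on the degrees.
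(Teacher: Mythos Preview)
Your proof is correct and rests on the same core ideas as the paper's: transitivity of the trace together with the fact that the scalar factor coming from $[\mathbb{F}_q:\mathbb{F}_{q_0}]=r$ is invertible in characteristic~$3$ precisely because $r\neq 3$, followed by a rescaling inside $\mathbb{F}_{q_0}\cap\mathbb{F}_{q_1}$ for the ``in particular'' part.

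The organizational difference is worth noting. The paper argues by a case split on whether $\mathbb{F}_{q_1}\subseteq\mathbb{F}_{q_0}$: when it is, the trace is factored through $\mathbb{F}_{q_0}$ to produce the factor $r$; when it is not, so that $\mathbb{F}_q=\mathbb{F}_{q_0}\mathbb{F}_{q_1}$, the paper uses $\mathbb{F}_{q_1}$-linearity to argue that vanishing on $\mathbb{F}_{q_0}$ would force vanishing on all of $\mathbb{F}_q$. Your compositum-and-intersection approach unifies these two cases into the single identity
\[
\mathrm{Tr}_{\mathbb{F}_q/\mathbb{F}_{q_1}}\big|_{\mathbb{F}_{q_0}}=[\mathbb{F}_q:L]\cdot\mathrm{Tr}_{\mathbb{F}_{q_0}/\mathbb{F}_{q_2}},\qquad L=\mathbb{F}_{q_0}\mathbb{F}_{q_1},\ \mathbb{F}_{q_2}=\mathbb{F}_{q_0}\cap\mathbb{F}_{q_1},
\]
via the restriction isomorphism $\mathrm{Gal}(L/\mathbb{F}_{q_1})\cong\mathrm{Gal}(\mathbb{F}_{q_0}/\mathbb{F}_{q_2})$. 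This is slightly more conceptual and in fact more informative than the paper's second case, since it identifies the image of the restricted trace as exactly $\mathbb{F}_{q_2}$ rather than merely showing it is nonzero.
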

\begin{proof}
	Let $\mathbb{F}_{q_1}$ be a subfield of $\mathbb{F}_q.$
	We first claim that there exists 
	$\lambda\in \mathbb{F}_q$ such that $\mathrm{Tr}_{\mathbb{F}_q/\mathbb{F}_{q_1}}(\lambda)\neq 0.$
	Assume $\mathbb{F}_{q_1}\subseteq \mathbb{F}_{q_0}.$
	Then we take $\lambda\in \mathbb{F}_{q_0}$ such that $\mathrm{Tr}_{\mathbb{F}_{q_0}/\mathbb{F}_{q_1}}(\lambda)\neq 0,$ and hence 
	\begin{equation*}
		\mathrm{Tr}_{\mathbb{F}_q/\mathbb{F}_{q_1}}(\lambda)=\mathrm{Tr}_{\mathbb{F}_{q_0}/\mathbb{F}_{q_1}}(\mathrm{Tr}_{\mathbb{F}_q/\mathbb{F}_{q_0}}(\lambda))=\mathrm{Tr}_{\mathbb{F}_{q_0}/\mathbb{F}_{q_1}}(r\lambda)=r\mathrm{Tr}_{\mathbb{F}_{q_0}/\mathbb{F}_{q_1}}(\lambda)\neq 0.
	\end{equation*}
It remains to consider the case $\mathbb{F}_{q_1}$ is not a subfield of $\mathbb{F}_{q_0}.$
Then $\mathbb{F}_q=\mathbb{F}_{q_1}\mathbb{F}_{q_0}.$
Assume the contrary that $\mathrm{Tr}_{\mathbb{F}_q/\mathbb{F}_{q_1}}(\lambda)=0$ for all $\lambda\in \mathbb{F}_{q_0}.$
Then  $\mathrm{Tr}_{\mathbb{F}_q/\mathbb{F}_{q_1}}(\mu)=0$ for any $\mu\in \mathbb{F}_{q}$, a contradiction. 
This proves the claim.

By the previous paragraph, there exists $\lambda\in \mathbb{F}_{q_0}$ such that $\mathrm{Tr}_{\mathbb{F}_q/\mathbb{F}_{q_1}}(\lambda)\in \mathbb{F}_{q_1}^\times.$
Note that $\mathrm{Tr}_{\mathbb{F}_q/\mathbb{F}_{q_1}}(\lambda)\in \mathbb{F}_{q_0},$ so $\mathrm{Tr}_{\mathbb{F}_q/\mathbb{F}_{q_1}}(\lambda)^{-1}\lambda\in \mathbb{F}_{q_0},$ and
\begin{equation*}
	\mathrm{Tr}_{\mathbb{F}_q/\mathbb{F}_{q_1}}(\mathrm{Tr}_{\mathbb{F}_q/\mathbb{F}_{q_1}}(\lambda)^{-1}\lambda)=\mathrm{Tr}_{\mathbb{F}_q/\mathbb{F}_{q_1}}(\lambda)^{-1}\mathrm{Tr}_{\mathbb{F}_q/\mathbb{F}_{q_1}}(\lambda)=1.
\end{equation*} 
This completes the proof.
	\end{proof}
\section{A reduction}\label{sec:red}

In this section, we first examine all the subgroups of $G_0={}^2G_2(q)$ and use Lemma~\ref{lem:redu} to reduce the possibility of $G_\omega$ and fixer $K$ with $|K|\geqslant |G_\omega|$ to two cases. 
In particular, we prove the following lemma.
\begin{lem}\label{lem:reds}
	Assume $G$ is an almost simple group with socle $G_0\cong {}^2G_2(q),$ and $K\leqslant G$ is a fixer with $|K|\geqslant |G_\omega|.$
	Then, one of the following holds
	\begin{itemize}
		\item [\rm(i)] $G_\omega$ is a maximal subfield subgroup with $(G_0)_\omega\cong {}^2G_2(q_0),$ where $q=q_0^r$ and $r$ is an odd prime, and $K$ is contained in a normalizer of a Sylow $3$-subgroup;
		\item [\rm(ii)] $q=27,$ $G_\omega$ is the normalizer of a cyclic Sylow $7$-subgroup of order $7,$ and $K$ is the normalizer of a Sylow $2$-subgroup of $G,$ further in this case if $G=G_0,$ then $K$ is indeed a fixer with $|K|=|G_\omega|$. 
	\end{itemize} 
\end{lem}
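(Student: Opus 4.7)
The plan is to invoke Lemma~\ref{lem:redu} to replace $K$ by $K_0 := K \cap G_0$, a fixer of $G_0$ satisfying $|K_0| \geqs |(G_0)_\omega|$, and then to run through each possible isomorphism type of $(G_0)_\omega$ supplied by Lemma~\ref{max_subgps}: the Borel subgroup $B$ (order $q^3(q-1)$); the involution centralizer $\CC_2 \times \PSL_2(q)$ (order $q(q^2-1)$); the subfield subgroup ${}^2G_2(q_0)$ with $q = q_0^r$ for some prime $r$; and the cyclic-Hall normalizers $N_{G_0}(A_i)$ for $i \in \{0,1,2,3\}$. For each case, the plan is to combine three tools: Lagrange's theorem, which places $K_0$ inside some maximal subgroup $M$ of $G_0$; the spectrum inclusion $\mathrm{Spec}(K_0) \subseteq \mathrm{Spec}((G_0)_\omega)$ from Lemma~\ref{lem:spec}; and the subgroup classifications from Lemmas~\ref{solgps} and~\ref{nonsol}.

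The non-subfield cases should be dispatched quickly. When $(G_0)_\omega = B$, a direct order comparison using Lemma~\ref{max_subgps} shows that the only subgroup of $G_0$ of order at least $|B|$ is a conjugate of $B$ itself, so $K_0$ is stable. When $(G_0)_\omega \cong \CC_2 \times \PSL_2(q)$, its spectrum misses $9$ because $\PSL_2(q)$ in characteristic $3$ has elementary abelian Sylow $3$-subgroups; Lemma~\ref{lem:spec} therefore forces $K_0$ to avoid every element of order $9$ in $G_0$, but any candidate overgroup of $K_0$ with order at least $q(q^2-1)$ different from a conjugate of $(G_0)_\omega$ meets the order-$9$ class, again yielding only the stable fixer. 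For the four normalizer types $(G_0)_\omega = N_{G_0}(A_i)$, every such subgroup has order at most $6(q+1+\sqrt{3q})$, and Lemmas~\ref{solgps} and~\ref{nonsol} supply a short list of candidate $K_0$ of comparable size; the spectrum inclusion eliminates each option except in the exceptional situation $q = 27$, where the coincidence $|N_{G_0}(S_2)| = 168 = |N_{G_0}(A_1)|$ produces case (ii), and a direct check using Lemma~\ref{p:fixers}\,(ii) (that each element of $N_{G_0}(S_2)$ is $G_0$-conjugate into $N_{G_0}(A_1)$) verifies that the Sylow $2$-normalizer is a genuine fixer in ${}^2G_2(27)$.

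The substantive case is $(G_0)_\omega \cong {}^2G_2(q_0)$, giving case (i). If $K_0$ is insoluble, Lemma~\ref{nonsol} forces it to be one of $\PSL_2(8)$, $\PSL_2(q')$, $\CC_2 \times \PSL_2(q')$ or ${}^2G_2(q')$, and the bound $|K_0| \geqs q_0^3(q_0-1)(q_0^3+1)$ combined with the divisibility $q' \mid q$ leaves only $K_0 \cong {}^2G_2(q_0)$, i.e.\ a conjugate of $(G_0)_\omega$. Hence $K_0$ is soluble, and Lemma~\ref{solgps} places $K_0$ inside some $N_{G_0}(A_j)$ or $N_{G_0}(S_p)$; the same lower bound on $|K_0|$ rules out every option except $N_{G_0}(S_3) = B$, the Borel. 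Since the Sylow $3$-subgroup $Q$ of $B$ is the unique Sylow $3$-subgroup of $B$ (hence characteristic), a Frattini-type argument---using that $[G:G_0]$ is coprime to $3$ so that Sylow $3$-subgroups of $K_0$ remain Sylow in $K$---extends $K_0 \leqs N_{G_0}(Q)$ to $K \leqs N_G(Q)$, as required by (i); the primality of $r$ is inherited from the maximality of $(G_0)_\omega$, and $r$ is odd because $q = 3^{2n+1}$ has no subfield of even index. The main obstacle I anticipate is the order-arithmetic bookkeeping for the insoluble alternatives $\PSL_2(q')$ and $\CC_2 \times \PSL_2(q')$ when $q_0$ is small, and the delicate spectrum verification at $q = 27$ that distinguishes the Sylow $2$-normalizer as a fixer from a mere conjugate of the Sylow $7$-normalizer.
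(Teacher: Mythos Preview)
Your plan mirrors the paper's case analysis through Lemma~\ref{max_subgps}, but two steps need repair. In the subfield case $(G_0)_\omega\cong{}^2G_2(q_0)$, the order bound together with the divisibility $q'\mid q$ does \emph{not} exclude the insoluble candidates $\PSL_2(q')$, $\CC_2\times\PSL_2(q')$, ${}^2G_2(q')$ for $q'>q_0$: already $|\PSL_2(q)|=q(q^2-1)/2$ far exceeds $|{}^2G_2(q_0)|=q_0^3(q_0^3+1)(q_0-1)$ whenever $r\geqslant 3$, so this is not a small-$q_0$ bookkeeping issue. The paper's Lemma~\ref{f:subfi} dispatches these via the spectrum inclusion you already list as a tool---each such group contains a cyclic subgroup of order $(q'-1)/2$, and no element of that order lies in ${}^2G_2(q_0)$ once $q'>q_0$---so redirect that tool here rather than relying on order arithmetic.

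Second, your Frattini step asserts that $[G:G_0]$ is coprime to $3$, but $|G/G_0|$ divides $2n+1$, which can certainly be a multiple of $3$ (already at $q=27$). The argument is salvaged by a different fact: if $P$ is a Sylow $3$-subgroup of $K_0$, then Frattini gives $K=K_0\,N_K(P)$ regardless of whether $P$ is Sylow in $K$, and $N_G(P)\leqslant N_G(Q)$ because every nontrivial $3$-subgroup of $G_0$ lies in a \emph{unique} Sylow $3$-subgroup of $G_0$ (this is \cite[Lemma~1]{Levchuk85}, invoked repeatedly in the paper), so anything normalizing $P$ must normalize the unique $Q$ containing it. A minor slip: $N_{G_0}(A_0)\cong D_{2(q-1)}$ is not maximal in $G_0$ (Lemma~\ref{max_subgps} lists only $i\in\{1,2,3\}$), so drop $i=0$ from your list of possible point stabilizers.
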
 

\begin{lem}\label{f:borel}
	If $G_\omega$ is a normalizer of some Sylow $3$-subgroup of $G_0$, then $G$ does not have a non-stable fixer $K$ with $|K|\geqslant |G_\omega|.$ 
\end{lem}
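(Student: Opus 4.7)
The plan is to first reduce to the socle via Lemma~\ref{lem:redu}, then use the subgroup structure of $G_0$ to force $K_0:=K\cap G_0$ to be a Borel subgroup, and finally lift to $K$. Let $B=QH$ denote the Borel subgroup of $G_0$ from Lemma~\ref{Borel}, so $|B|=q^3(q-1)$, and $(G_0)_\omega$ is a $G_0$-conjugate of $B$. By Lemma~\ref{lem:redu}, $K_0$ is a fixer of $G_0$ with $|K_0|\geqslant|B|$.

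I claim $K_0$ is a $G_0$-conjugate of $B$. First suppose $K_0$ is insoluble. By Lemma~\ref{nonsol}, $K_0$ is isomorphic to $\PSL_2(8)$, $\PSL_2(q')$, $2\times\PSL_2(q')$, or ${}^2G_2(q')$ with $q=(q')^k$. Routine order comparisons eliminate the first three families (their orders are bounded by $q(q^2-1)$, which is smaller than $|B|$), and also eliminate ${}^2G_2(q')$ when $k\geqslant 2$ by evaluating the ratio $|B|/|{}^2G_2(q')|=q'^{3(k-1)}(q'^k-1)/[(q'-1)(q'^3+1)]$. The borderline case $K_0\cong{}^2G_2(q)=G_0$ would force $G_0$ itself to be a fixer of its primitive action on the coset space $[G_0:B]$; but by Lemma~\ref{Hall}, the cyclic Hall subgroup $A_3\cong\CC_{q+1+\sqrt{3q}}$ has order coprime to $|B|$, so any generator of $A_3$ lies in no conjugate of $B$ and is therefore a derangement in $G_0$, a contradiction.

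Thus $K_0$ is soluble, and by Lemma~\ref{solgps} it is contained in some $N_{G_0}(A_i)$ or in $N_{G_0}(S_p)$ with $p\in\{2,3\}$. The orders listed in Lemma~\ref{Hall} and Lemma~\ref{normsyl2} show that all these normalizers have order strictly less than $|B|$ except $N_{G_0}(S_3)=B$ itself (where $S_3=Q$). Combined with $|K_0|\geqslant|B|$, this forces $K_0=B^g$ for some $g\in G_0$. Since $K_0\trianglelefteq K$, we obtain $K\leqslant N_G(K_0)=N_G(B^g)=N_G(B)^g$. Because $Q$ is the unique Sylow $3$-subgroup of $B$, we have $N_G(B)=N_G(Q)$, which is a $G$-conjugate of $G_\omega$ by the hypothesis on $G_\omega$. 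Therefore $K$ is contained in a conjugate of $G_\omega$, i.e.\ $K$ is stable, as required.

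The main obstacle is the exclusion of $K_0=G_0$, which cannot be dispatched by an order inequality; instead, one exhibits a specific derangement in $G_0$, namely any element of order $q+1+\sqrt{3q}$ from the Hall torus $A_3$, whose order is coprime to $|B|$.
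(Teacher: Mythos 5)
Your proposal is correct, but it follows a genuinely different route from the paper. The paper's proof is a one-liner: the coset action of $G$ on the normalizer of a Sylow $3$-subgroup is the $2$-transitive action on the $q^3+1$ points of the Ree unital, and the main theorem of \cite{MST_2-trans} (the EKR property for $2$-transitive groups) is then invoked to rule out large non-stable fixers. You instead argue structurally: reduce to $K_0=K\cap G_0$ via Lemma~\ref{lem:redu}, run through the insoluble subgroups of Lemma~\ref{nonsol} (eliminated by order comparisons, with the borderline case $K_0=G_0$ killed by exhibiting a derangement from the Hall torus $A_3$, whose order $q+1+\sqrt{3q}$ divides $q^2-q+1$ and is indeed coprime to $|B|=q^3(q-1)$) and the soluble subgroups of Lemma~\ref{solgps} (all normalizers other than $B$ have order far below $q^3(q-1)$), forcing $K_0=B^g$; then $K\leqslant N_G(B^g)\leqslant N_G(Q^g)$, which is a point stabilizer since $Q$ is characteristic in $B$, so $K$ is stable. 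Your approach is longer but self-contained within the paper's toolkit (Lemmas~\ref{Hall}--\ref{nonsol} plus Lemma~\ref{lem:redu}), and it delivers the sharper conclusion that \emph{every} fixer of order at least $|G_\omega|$ lies in a point stabilizer; in particular it handles the boundary case $|K|=|G_\omega|$ directly, whereas the cited EKR theorem by itself only bounds the size of an intersecting set, so the paper's appeal to it for the non-stable fixer of order exactly $|G_\omega|$ implicitly needs a characterization of maximum intersecting sets that your argument does not require. The paper's approach, in exchange, is far shorter and avoids all case analysis.
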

\begin{proof}
	Since $G$ acts $2$-transitively on $\Omega,$ this action has the EKR property by the main theorem of \cite{MST_2-trans}, and hence $G$ does not have a non-stable fixer $K$ with $|K|\geqslant |G_\omega|.$
	This completes the proof. 
	\end{proof}

\begin{lem}\label{f:2cent}
	If $G_\omega$ is a centralizer of an involution, then $G$ does not have a non-stable fixer $K$ with $|K|\geqslant |G_\omega|.$
\end{lem}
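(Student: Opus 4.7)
The plan is to reduce to the socle via Lemma~\ref{lem:redu}, classify the possibilities for $K_0:=K\cap G_0$ using the spectrum constraint together with the subgroup structure of $G_0$ recorded in Section~\ref{sec:ree}, and then upgrade ``$K_0$ is a point-stabiliser of $G_0$'' to ``$K$ is stable''. By Lemma~\ref{lem:redu}, $K_0$ is a fixer of $G_0$ with $|K_0|\geqs|(G_0)_\omega|=|\CC_2\times\PSL_2(q)|=q(q^2-1)$. The critical input from Lemma~\ref{lem:spec} is that $\mathrm{Spec}(K_0)\subseteq\mathrm{Spec}((G_0)_\omega)$; since the element orders of $\PSL_2(q)$ for $q=3^{2n+1}$ divide $3$, $(q-1)/2$ or $(q+1)/2$, the spectrum of $\CC_2\times\PSL_2(q)$ contains no element of order $9$, so $K_0$ has no element of order $9$.

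Next I would split on the solvability of $K_0$. In the soluble case, Lemma~\ref{solgps} forces $K_0\leqs N_{G_0}(A_i)$ for some $i\in\{0,1,2,3\}$ or $K_0\leqs N_{G_0}(S_p)$ for $p\in\{2,3\}$. Using Lemmas~\ref{Hall} and \ref{normsyl2}, every such normaliser apart from $N_{G_0}(S_3)=B=QH$ has order $O(q)$, and is therefore too small. So $K_0\leqs B$; but by Lemma~\ref{uni}(ii) every element of $Q\setminus Q'$ has order $9$, so the spectrum constraint forces $K_0\cap Q\leqs Q'$, whence $|K_0|\leqs|Q'|\cdot|H|=q^2(q-1)<q(q^2-1)$, a contradiction. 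In the insoluble case, Lemma~\ref{nonsol} restricts $K_0$ to one of $\PSL_2(8),\,\PSL_2(q'),\,\CC_2\times\PSL_2(q'),\,{}^2G_2(q')$ with $q=(q')^k$. Order comparison together with the no-order-$9$ condition (which rules out $\PSL_2(8)$ and every ${}^2G_2(q')$ with $q'\geqs 27$, leaving only the tiny ${}^2G_2(3)$) eliminates every option except $K_0\cong\CC_2\times\PSL_2(q)$, corresponding to $q'=q$.

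Finally, since $G_0$ has a unique class of involutions, the central involution $\eta_0$ of $K_0\cong\CC_2\times\PSL_2(q)$ satisfies $K_0\leqs C_{G_0}(\eta_0)$, and equality of orders yields $K_0=C_{G_0}(\eta_0)=(G_0)_{\omega'}$ for some $\omega'\in\Omega$. Since $(G_0)_\omega$ is maximal and non-normal in $G_0$ (Lemma~\ref{max_subgps}), it is self-normalising in $G_0$; a short argument using transitivity of $G_0$ on $\Omega$ then promotes this to $N_G((G_0)_{\omega'})=G_{\omega'}$. As $K$ normalises $K_0=K\cap G_0$, we conclude $K\leqs G_{\omega'}$, so $K$ is stable, as required. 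The main subtlety I expect to write out carefully is this final upgrade step; the only degenerate case is $q=3$, where $G_0={}^2G_2(3)$ is not simple, but there $K_0={}^2G_2(3)$ would force $K$ to be transitive, contradicting the fact that a fixer is intransitive, so no new trouble arises.
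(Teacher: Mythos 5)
Your proof is correct, and its core contradiction is the same as the paper's: once $K_0=K\cap G_0$ lies in a Borel subgroup, the absence of elements of order $9$ in $\mathrm{Spec}((G_0)_\omega)$ forces $K_0\cap Q\leqslant Q'$ via Lemma~\ref{uni}\,(ii), giving $|K_0|\leqslant q^2(q-1)<q(q^2-1)=|(G_0)_\omega|$. Where you diverge is the confinement step: the paper simply compares $|K_0|\geqslant q(q^2-1)$ with the orders of the maximal subgroups in Lemma~\ref{max_subgps} and concludes $K_0$ sits in a Borel subgroup, whereas you run the soluble/insoluble dichotomy of Lemmas~\ref{solgps} and \ref{nonsol} (the machinery the paper reserves for the other actions in Lemmas~\ref{f:nq+1/4}--\ref{f:subfi}). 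Your longer route buys something real: the paper's phrase ``thus $K_0$ is a subgroup of a Borel subgroup'' tacitly discards the other large possibility, namely $K_0$ equal to a conjugate of $C_{G_0}(\eta)$, which is exactly the stable configuration the lemma is meant to exclude; you treat it explicitly, showing $K_0=C_{G_0}(\eta_0)=(G_0)_{\omega'}$ and then upgrading to $K\leqslant N_G((G_0)_{\omega'})=G_{\omega'}$ via self-normalization of the maximal non-normal subgroup $(G_0)_{\omega'}$ in the simple group $G_0$ together with transitivity, so $K$ is stable. That final promotion argument is sound (distinct points have distinct $G_0$-stabilizers here, so $N_G((G_0)_{\omega'})=G_{\omega'}$), and your side remarks (elements of order $9$ in $\mathrm{PSL}_2(8)$ and in ${}^2G_2(q')$ for $q'\geqslant 27$, the size bound killing ${}^2G_2(3)$ and all proper $q'<q$) all check out; the $q=3$ caveat is moot since $G_0$ is assumed simple. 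In short: same decisive inequality, a more careful reduction on your part, a terser one in the paper.
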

\begin{proof}
	Assume $K$ is a fixer of $G$ and $|K|\geqslant |G_\omega|.$
	Then $K_0$ is a fixer of $G_0$
	and $|K_0|\geqslant |(G_0)_\omega|$ by Lemma~\ref{lem:redu}. 
	Thus, $K_0$ is a subgroup of a Borel subgroup by computing the order of all maximal subgroups of $G_0$ in Lemma~\ref{max_subgps}.
	In view of Lemma~\ref{p:fixers} \rm(iv), we may replace $K$ with $K^g$ for some $g\in G_0$ and assume $K_0$ is a subgroup of the Borel subgroup $B$ of $G$ defined in Lemma~\ref{Borel}.
	
	Note that $(G_0)_\omega$ does not contain any element of order $9$, $K_0\cap Q\leqslant Q'$ by combining Lemma~\ref{lem:spec} and Lemma~\ref{uni} \rm(ii).
	Therefore, 
	\begin{equation*}
		|K_0|\leqslant q^2(q-1) <q(q^2-1) = |(G_0)_\omega|,
	\end{equation*}
   a contradiction to $|K_0|\geqslant |(G_0)_\omega|.$
   This completes the proof.
	\end{proof}

\begin{lem}\label{f:nq+1/4}
	If $G_\omega=N_{G}(A_1)$ and $K$ is a non-stable fixer of $G$ with $|K|\geqslant |G_\omega|,$ then $q=27$ and $K$ is the normalizer of some Sylow $2$-subgroup of $G.$
	Further, if $q=27$ and $G=G_0,$ then the normalizer of any Sylow $2$-subgroup is a fixer of $G.$  
\end{lem}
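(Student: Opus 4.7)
The plan splits into two parts. Part A shows that a non-stable fixer $K$ of $G$ with $|K|\geqs|G_\omega|$ forces $q=27$ and $K=N_G(S_2)$, while Part B confirms that for $q=27$ and $G=G_0$ the Sylow $2$-normaliser is indeed a fixer of $G_0$.

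Part A. I would first apply Lemma~\ref{lem:redu} to pass to $K_0:=K\cap G_0$, a fixer of $G_0$ with $|K_0|\geqs|(G_0)_\omega|=6(q+1)$. Because $\gcd(q+1,3)=1$, the Sylow $3$-subgroup of $(G_0)_\omega=T{:}(A_1{:}\langle t\rangle)$ is $\langle t^2\rangle$ of order $3$, so $\mathrm{Spec}((G_0)_\omega)$ omits $9$, and by Lemma~\ref{lem:spec} the same holds for $K_0$. I would then run through the candidates for $K_0$ from Lemmas~\ref{nonsol} and~\ref{solgps}. On the insoluble side: $\PSL_2(8)$ and ${}^2G_2(q')$ contain elements of order $9$ (for the Ree case use the element $X_S(1,1,0)$ from Lemma~\ref{conj_uni}), while $\PSL_2(q')$ or $2\times\PSL_2(q')$ with $q'=3^m$ contribute cyclic subgroups of orders $(q'\pm 1)/2$ whose prime divisors fail to appear in $6(q+1)$ once $q'\geqs 9$, and $q'=3$ gives order at most $24<6(q+1)$. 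On the soluble side: $N_{G_0}(A_0)$ and $N_{G_0}(A_2)$ are excluded by order; for $N_{G_0}(A_1)$ the order forces $K_0=(G_0)_\omega$, whence $K\leqs N_G((G_0)_\omega)=N_G(A_1)=G_\omega$ contradicts non-stability; and for $N_{G_0}(A_3)$, $|A_3|=q+1+\sqrt{3q}$ is odd and coprime to $6(q+1)$ (from $\gcd(q^2-q+1,q+1)=\gcd(3,q+1)=1$), giving $|K_0|\leqs 6$.

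The central obstacle is the Borel case $N_{G_0}(S_3)=B=Q{:}H$. The absence of order $9$ together with Lemma~\ref{uni}(ii) forces $K_0\cap Q\leqs Q'$, while $\gcd(q-1,12)=2$ forces $|K_0/(K_0\cap Q)|\leqs 2$, so $|K_0|\leqs 2q^2$. This crude bound is too weak and must be sharpened by a class analysis: using the centraliser data in Lemma~\ref{centuni} and the inversion criterion in Lemma~\ref{conj9}, I would identify the $G_0$-class of the non-identity elements of $\langle t^2\rangle\leqs(G_0)_\omega$ and show that this class meets $Q'$ only in a single $G_0$-conjugate of $Z(Q)$. Since every non-identity element of the elementary abelian group $K_0\cap Q\leqs Q'$ has order $3$ and must be $G_0$-conjugate into $(G_0)_\omega$, it follows that $K_0\cap Q$ lies in a single $G_0$-conjugate of $Z(Q)$, giving $|K_0\cap Q|\leqs q$ and $|K_0|\leqs 2q<6(q+1)$, a contradiction. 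The only surviving candidate is $K_0\leqs N_{G_0}(S_2)$ of order $168$ by Lemma~\ref{normsyl2}, so $168\geqs 6(q+1)$ forces $q=27$ and $K_0=N_{G_0}(S_2)$. Since $N_{G_0}(N_{G_0}(S_2))=N_{G_0}(S_2)$ and $|G_\omega|=168\cdot|G:G_0|$, the inclusion $K\leqs N_G(K_0)$ combined with $|K|\geqs|G_\omega|$ gives $K=N_G(S_2)$, which is the normaliser of a Sylow $2$-subgroup of $G$ because $|G:G_0|$ is odd.

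Part B. For $q=27$ and $G=G_0$, let $M:=N_{G_0}(S_2)\cong 2^3{:}(7{:}3)$, of order $168=|(G_0)_\omega|$. Direct inspection of $M$ gives $\mathrm{Spec}(M)=\{1,2,3,6,7\}=\mathrm{Spec}((G_0)_\omega)$. All involutions of $G_0$ are $G_0$-conjugate and all Sylow $7$-subgroups are $G_0$-conjugate, so elements of $M$ of orders $2$ and $7$ are $G_0$-conjugate into $(G_0)_\omega$; the class identification from Part~A places the order-$3$ elements of $M$ (arising from the $C_3$ acting on the Sylow $7$) and of $(G_0)_\omega$ (arising from $\langle t^2\rangle$) in the same $G_0$-class, and the order-$6$ elements are handled by pairing each involution of $M$ with its commuting order-$3$ partner. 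Hence $M\subseteq\bigcup_{g\in G_0}((G_0)_\omega)^g$, so Lemma~\ref{p:fixers} shows $M$ is a fixer. The hardest single step is the class identification in the Borel case of Part~A: extracting from the centraliser and inversion data exactly which $G_0$-class of order-$3$ elements meets $(G_0)_\omega$, as this is what converts the weak bound $|K_0|\leqs 2q^2$ into the sharp bound $|K_0|\leqs 2q$.
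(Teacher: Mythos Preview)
Your overall architecture matches the paper's, and most of the case eliminations are fine. However, the Borel case in Part~A contains a genuine error in the class identification, and this is precisely the step you flag as the hardest.

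The order-$3$ elements of $(G_0)_\omega$ do \emph{not} lie in the $Z(Q)$-class. The element $t^2$ commutes with the involution $t^3$, so by Lemma~\ref{centuni} its $G_0$-centraliser has even order; but elements of $Z(Q)$ have centraliser $Q$ of odd order $q^3$. Hence $t^2$ lies in the class of $Q'\setminus Z(Q)$ (the classes $y^{G_0}$, $(y^{-1})^{G_0}$ of Lemma~\ref{conj_uni}), not in $z^{G_0}$. Consequently your assertion that ``this class meets $Q'$ only in a single $G_0$-conjugate of $Z(Q)$'' is false: every element of $Q'\setminus Z(Q)$ has centraliser of order $2q^2$, so none of them lies in any conjugate of $Z(Q)$. (Incidentally, Lemma~\ref{conj9} concerns elements of $Q\setminus Q'$, i.e.\ order $9$, so it is not the relevant tool here.)

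The correct deduction runs the other way. Since the $Z(Q)$-class $z^{G_0}$ is \emph{absent} from $(G_0)_\omega$, the fixer condition forces $K_0\cap Z(Q)=1$. As $K_0\cap Q\leqslant Q'$ (no order $9$) and $Q'$ is elementary abelian, the quotient map gives
\[
K_0\cap Q \;\cong\; (K_0\cap Q)/(K_0\cap Z(Q)) \;\hookrightarrow\; Q'/Z(Q),
\]
whence $|K_0\cap Q|\leqslant q$. Combined with your observation that $|K_0/(K_0\cap Q)|\leqslant 2$, this yields $|K_0|\leqslant 2q<6(q+1)$, the required contradiction. This is exactly the paper's argument; once you reverse the class identification, the rest of your outline goes through. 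In Part~B, the same correction applies to your order-$3$ and order-$6$ claims: the common class is that of $Q'\setminus Z(Q)$, and for order $6$ one should argue (as the paper does) that every cyclic subgroup of order $6$ in $G_0$ is conjugate to $\langle X_S(0,1,0)h(-1)\rangle$, rather than matching the factors separately.
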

\begin{proof}
	Let $K_0$ be a fixer of $G_0$ such that $|K_0|\geqslant |(G_0)_\omega|.$
	
	Assume first $K_0$ is insoluble. 
	Note that $(G_0)_\omega$ does not contain any element of order $9,$ so $K_0$ does not contain any element of order $9$ by Lemma~\ref{lem:spec}.
	Thus, $K_0$ is isomorphic to one of $\PSL_2(q')$ and $K_0\cong \CC_2 \times \PSL_2(q')$ by Lemma~\ref{nonsol}.
	But then $K_0$ contains a cyclic subgroup of order $\frac{q'-1}{2}$ and $(G_0)_\omega$ does not contain any element of such order, a contradiction to Lemma~\ref{lem:spec}. 
	
	Thus, $K_0$ is soluble.
	Then $K_0$ is conjugate to a subgroup of one of the five soluble groups that are not $(G_0)_\omega=N_{G_0}(A_1)$ listed in Lemma~\ref{solgps}, ($K_0$ is not conjugate to $N_{G_0}(A_1)$ since $K$ is non-stable).
	We next deal with these five cases.
	Assume first $K_0$ is contained in a Borel subgroup of $G_0.$
	Then we may replace $K_0$ with $K_0^g$ for a suitable $g\in G_0$ and assume further $K_0$ is contained in the Borel subgroup $B=N_{G_0}(Q)$ defined in Lemma~\ref{Borel} by Lemma~\ref{p:fixers} \rm(iv). 
	Note that $(G_0)_\omega$ does not contain any element of order $9,$ $K_0$ does not contain any element of order $9$ by Lemma~\ref{lem:spec}, and hence $K_0\cap Q \leqslant Q'.$
	Since $|(G_0)_\omega|_3=3$ and $(G_0)_\omega$ contains an element of order $6,$ all elements of order $3$ in $(G_0)_\omega$ are conjugate to some elements in $Q'\backslash Z(Q)$ by Lemma~\ref{centuni}, and hence $K_0 \cap Z(Q)=1.$
	Thus, 
	$$K_0\cap Q \cong K_0\cap Q/K_0 \cap Z(Q)\leqslant Q'/Z(Q),$$
	and hence $|K_0\cap Q|<q.$
	On the other hand, $|(G_0)_\omega|$ is coprime to $\frac{q-1}{2},$ yielding $K_0$ does not contain any element of order dividing $\frac{q-1}{2}.$
	Therefore, 
	\begin{equation*}
		|K_0|\leqslant 2q< |(G_0)_\omega|,
	\end{equation*} 
	a contradiction.
	
	Assume next $K_0$ is conjugate to a subgroup of $N_{G_0}(A_2).$
	Note that $|(G_0)_\omega|$ is coprime to $|A_2|,$ $K_0$ does not contain any element of order dividing $|A_2|$ by Lemma~\ref{lem:spec}, yielding 
	\begin{equation*}
		|K_0|\leqslant 6 <|(G_0)_\omega|,
	\end{equation*} 
   a contradiction.
   A similar argument yields $K_0$ is not conjugate to a subgroup of $N_{G_0}(A_0)$ or $N_{G_0}(A_3).$
   
   It remains to consider $K_0$ is contained in a Sylow $2$-normalizer.
   Since 
   $$168\geqslant |K_0|> |(G_0)_\omega|=168,$$ it follows that $q=27$ and $K_0$ is the normalizer of a Sylow $2$-subgroup.
   
   Finally, assume $q=27$ and let $K_0$ be the normalizer of a Sylow $2$-subgroup, we prove that $K_0$ is a fixer of $G_0.$
   Note that $K_0\cong \mathrm{A\Gamma L}_1(8)$ by Lemma~\ref{normsyl2}.
   Let $x\in K_0,$ then the order of $x$ is one of $1,2,3,6,7$. 
   If $x$ is an involution, then $x$ is conjugate to an involution in $(G_0)_\omega,$ by noting that all involutions are conjugate in $G_0.$
   Assume $|x|=6.$ Then there exists $g\in G_0$ such that $x^g\in \la X_S(0,1,0)h(-1) \ra$ combing Lemma~\ref{Borel} and Lemma~\ref{centuni}.
   Similarly, every cyclic subgroup of order $6$ is conjugate to $\la X_S(0,1,0)h(-1) \ra.$
   Thus, $x$ is conjugate to an element in $G_\omega$ in $G_0.$
   By a similar argument, each element of order $3$ in $K_0$ is conjugate to an element of $(G_0)_\omega.$
   If $|x|=7,$ then $x$ is conjugate to some element in $(G_0)_\omega,$ by noting that the Sylow $7$-subgroup of $G_0$ is cyclic of order $7.$
   This completes the proof.   
	 
\end{proof}

\begin{lem}\label{f:a2a3}
	If $G_\omega=N_{G}(A_i),$ where $i\in \{2,3\},$ then $G$ does not have a non-stable fixer $K$ with $|K|\geqslant |G_\omega|.$
\end{lem}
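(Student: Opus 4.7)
The plan is to adapt the strategy of Lemma~\ref{f:nq+1/4}. Applying Lemma~\ref{lem:redu}, set $K_0 := K \cap G_0$; then $K_0$ is a fixer of $G_0$ with $|K_0| \geqslant |(G_0)_\omega|$. By Lemmas~\ref{nonsol} and~\ref{solgps}, $K_0$ is either isomorphic to one of $\PSL_2(8)$, $\PSL_2(q')$, $\CC_2 \times \PSL_2(q')$, ${}^2G_2(q')$ (with $q = (q')^{k'}$), or conjugate to a subgroup of $N_{G_0}(A_j)$ for some $0 \leqslant j \leqslant 3$, or of $N_{G_0}(S_p)$ for some $p \in \{2,3\}$. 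The non-stability of $K$ combined with $|K_0| \geqslant |(G_0)_\omega|$ rules out $K_0$ being conjugate to $(G_0)_\omega = N_{G_0}(A_i)$ itself.

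For the insoluble possibilities, Lemma~\ref{lem:spec} yields $\mathrm{Spec}(K_0) \subseteq \mathrm{Spec}((G_0)_\omega)$. A direct analysis of $A_i{:}\CC_6$ (using that $|A_i| = q+1\mp\sqrt{3q}$ is odd and coprime to $3$, that $\CC_6$ acts fixed-point-freely on $A_i$, and the norm formula for orders in semidirect products) gives $\mathrm{Spec}((G_0)_\omega) = \{d : d \mid |A_i|\} \cup \{1,2,3,6\}$. This immediately excludes $\PSL_2(8)$ and ${}^2G_2(q')$, both of which contain elements of order $9$. For $\PSL_2(q')$ or $\CC_2 \times \PSL_2(q')$ with $q = (q')^{k'}$: the oddness of $2n+1$ forbids $k' = 2$; the case $k' \geqslant 3$ gives $|K_0| \leqslant q < |(G_0)_\omega|$; the case $k' = 1$ (so $q' = q$) produces an element of order $(q+1)/2 > 6$ which, since $\gcd((q+1)/2, |A_i|)$ divides~$2$, does not divide $|A_i|$ and hence lies outside $\mathrm{Spec}((G_0)_\omega)$.

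For soluble $K_0$, three subcases remain. If $K_0 \leqslant N_{G_0}(A_j)$ with $j \neq i$, pairwise coprimality of $|A_0|, |A_1|, |A_2|, |A_3|$ and their coprimality with $6$ forces $K_0 \cap A_j = 1$, giving $|K_0| \leqslant 6 < |(G_0)_\omega|$. If $K_0 \leqslant N_{G_0}(S_2)$, then $|K_0| \leqslant 168$ by Lemma~\ref{normsyl2}, which is less than $|(G_0)_\omega|$ except when $q=27$ and $i=2$, in which case $|(G_0)_\omega|=114$ and the only divisor of $168$ in $[114,168]$ is $168$; hence $K_0 = N_{G_0}(S_2) \cong 2^3{:}(7{:}3)$, which contains elements of order $7$ outside $\mathrm{Spec}((G_0)_\omega) = \mathrm{Spec}(\CC_{19}{:}\CC_6)$.

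The main obstacle will be the final subcase $K_0 \leqslant B = N_{G_0}(S_3)$. Here I would follow Lemma~\ref{f:nq+1/4} closely. Since $9 \notin \mathrm{Spec}((G_0)_\omega)$, Lemma~\ref{uni}(ii) gives $K_0 \cap Q \leqslant Q'$. Moreover, $(G_0)_\omega$ contains a generator of $\CC_6$, so every order-$3$ element of $(G_0)_\omega$ commutes with an involution; by Lemma~\ref{centuni} its $G_0$-centralizer has order $2q^2$ (not $q^3$), placing it in $Q' \setminus Z(Q)$. Consequently no element of $Z(Q) \setminus \{1\}$ is conjugate into $(G_0)_\omega$, so $K_0 \cap Z(Q) = 1$ and $|K_0 \cap Q| \leqslant |Q'/Z(Q)| = q$. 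The quotient $K_0/(K_0 \cap Q)$ embeds into $B/Q \cong \CC_{q-1}$, and using $\gcd(q-1, 6|A_i|) = 2$ (from $\gcd(q-1,6)=2$ and $\gcd(q-1, |A_i|)=1$), a brief analysis of the possible orders of lifts to $K_0$ shows $|K_0/(K_0 \cap Q)| \leqslant 2$. Therefore $|K_0| \leqslant 2q$, and $2q < 6(q+1-\sqrt{3q}) \leqslant |(G_0)_\omega|$ for $q \geqslant 27$ delivers the final contradiction.
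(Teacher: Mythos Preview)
Your proof is correct and follows essentially the same strategy as the paper: reduce to $K_0$ via Lemma~\ref{lem:redu}, rule out the insoluble possibilities from Lemma~\ref{nonsol} by spectrum considerations, and then eliminate each soluble overgroup from Lemma~\ref{solgps} by combining the spectrum constraint with order bounds. Your treatment of the Borel case is identical in spirit to the paper's (the key facts being $K_0\cap Q\leqslant Q'$, $K_0\cap Z(Q)=1$, and $|K_0/(K_0\cap Q)|\leqslant 2$).

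One small slip: when $K_0\leqslant N_{G_0}(A_j)$ with $j\neq i$, your conclusion ``$K_0\cap A_j=1$, giving $|K_0|\leqslant 6$'' is only correct for $j\in\{2,3\}$. For $j=1$ the quotient $N_{G_0}(A_1)/A_1$ has order $24$, so you only get $|K_0|\leqslant 24$ (and for $j=0$ you get $|K_0|\leqslant 4$). This does not affect the outcome, since $24<6(q+1-\sqrt{3q})\leqslant |(G_0)_\omega|$ for $q\geqslant 27$, but the stated bound should be corrected. Incidentally, in the Sylow-$2$-normalizer case you correctly identify the exceptional possibility as $q=27$, $i=2$ (with $|(G_0)_\omega|=114$); the paper writes $i=3$ there, which appears to be a typo.
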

\begin{proof}
	Assume $K$ is a non-stable fixer of $G$ and $|K|\geqslant |G_\omega|.$
	We first claim $K$ is soluble. 
	Assume the contrary $K$ is insoluble. 
	Then $K_0$ is isomorphic to one of the subgroup listed in Lemma~\ref{nonsol}.
	Thus, either $K_0$ contains an element of order $9,$ or $K_0$ contains a cyclic subgroup of order $\frac{q'-1}{2}.$
	On the other hand, $(G_0)_\omega$ does not contain an element of such order, a contradiction to Lemma~\ref{lem:spec}.  
	This completes the claim.
	
	Thus, $K_0$ is conjugate to a subgroup of the six soluble groups listed in Lemma~\ref{solgps}.
	We next deal with these six cases.
	Assume first $K_0$ a subgroup of Borel subgroup.
	In view of Lemma~\ref{p:fixers} \rm(iv), we may replace $K$ with $K^g$ for a suitable $g\in G_0$ and assume $K_0\leqslant B,$ where $B$ is the Borel subgroup defined in Lemma~\ref{Borel}.
	Note that $|(G_0)_\omega|=3,$ and $(G_0)_\omega$ contains an element of order $6,$ each element of order $3$ in $(G_0)_\omega$ is conjugate to an element in $Q'\backslash Z(Q)$ by Lemma~\ref{centuni}.
	Thus, $K_0\cap Q\leqslant Q'$ and $K_0\cap Z(Q)=1,$ yielding 
	\begin{equation*}
		|K_0\cap Q|=|K_0\cap Q'/K_0\cap Z(Q)|\leqslant |Q'/Z(Q)|=q.
	\end{equation*}
    On the other hand, since $|(G_0)_\omega|$ is coprime to $\frac{q-1}{2},$ it follows that $K_0$ does not contain any element of order dividing $\frac{q-1}{2}.$
    Thus, 
    \begin{equation*}
    	|K_0|\leqslant 2q \leqslant |(G_0)_\omega|,
    \end{equation*}
   a contradiction.
   
   If $K_0$ is conjugate to a subgroup of $N_{G_0}(A_0),$ then $K_0$ does not contain any element of order dividing $\frac{q-1}{2}$ by the above argument, and hence $|K_0|\leqslant 4<|(G_0)_\omega|,$ a contradiction.
   
   We next assume $K_0$ is conjugate to a subgroup of $N_{G_0}(A_1).$
   Note that $(G_0)_\omega$ does not contain any element of order dividing $|A_1|.$
   It follows that $|K_0|\leqslant 24<|(G_0)_\omega|,$ a contradiction.
   
   By a similar argument of the previous paragraph, $K_0$ is not conjugate to a subgroup of $N_{G_0}(A_j)$ for $j\in \{2,3\} \backslash i.$
   
   Finally, assume $K_0$ is contained in a normalizer of a Sylow $2$-subgroup of $G_0.$ 
   Then $|(G_0)_\omega| \leqslant |K_0|\leqslant 168 ,$ and hence $i=3$ and $q=27.$
   Thus, $(G_0)_\omega$ does not contain any element of order $7,$ and hence $K_0$ does not contain any element of order $7$ by Lemma~\ref{lem:spec}, yielding 
   $$|K_0|\leqslant 24 < |(G_0)_\omega|,$$ 
    a contradiction.
    
   This completes the proof.
	\end{proof}

\begin{lem}\label{f:subfi}
	Assume $G$ is a maximal subfield subgroup such that $G_0\cong {}^2G_2(q_0),$ and $K$ is a non-stable fixer with $|K|\geqslant |G_\omega|.$
	Then $K$ is a subgroup of the normalizer of some Sylow $3$-subgroup of $G.$
\end{lem}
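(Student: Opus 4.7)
The plan is to reduce to the socle and prove that $K_0 := K \cap G_0$ is contained in a Borel subgroup of $G_0$, from which the statement about $K$ will follow by a normalizer argument. By Lemma~\ref{lem:redu}, $|K_0| \geqslant |(G_0)_\omega| = |{}^2G_2(q_0)|$, and by Lemma~\ref{lem:spec}, $\mathrm{Spec}(K_0) \subseteq \mathrm{Spec}({}^2G_2(q_0))$. Recall that $q = q_0^r$ with $r$ an odd prime distinct from $3$, so $r \geqslant 5$; by Lemma~\ref{Hall}, every cyclic subgroup of ${}^2G_2(q_0)$ has order at most $q_0 + 1 + \sqrt{3q_0}$, while $|{}^2G_2(q_0)| = q_0^3(q_0^3+1)(q_0-1)$.

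Using Lemma~\ref{nonsol}, I would first rule out insoluble $K_0$. The case $K_0 \cong \PSL_2(8)$ fails by order, since $|{}^2G_2(q_0)| \geqslant 1512 > 504$. For $K_0 \cong \PSL_2(q')$ or $\CC_2 \times \PSL_2(q')$ with $q = (q')^k$ and $q'$ a power of $3$, the spectrum bound forces $(q' \pm 1)/2 \leqslant q_0 + 1 + \sqrt{3q_0}$, whereas $|K_0| \geqslant |{}^2G_2(q_0)|$ forces $(q')^3$ to exceed a fixed constant multiple of $q_0^7$; these two inequalities are incompatible for every admissible $q_0$. For $K_0 \cong {}^2G_2(q')$ with $q = (q')^k$, primality of $r$ leaves only $q' \in \{q_0, q\}$: the case $q' = q$ gives $K_0 = G_0$, which is impossible; and when $q' = q_0$, $K_0$ is a subfield subgroup which, by the $G_0$-conjugacy of such subgroups, satisfies $K_0 = (G_0)_{\omega^g}$ for some $g \in G_0$. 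Since the maximal subgroup $(G_0)_\omega$ is self-normalizing in $G_0$, a direct calculation gives $N_G(K_0) = G_{\omega^g}$, so $K \leqslant G_{\omega^g}$, contradicting the non-stability of $K$.

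Hence $K_0$ is soluble, and by Lemma~\ref{solgps}, $K_0$ is contained in some $N_{G_0}(A_i)$ with $0 \leqslant i \leqslant 3$, or in $N_{G_0}(S_p)$ for $p \in \{2,3\}$. The case $N_{G_0}(S_3)$ is a Borel subgroup and yields the desired conclusion. The other five cases are eliminated by order: each has $|N_{G_0}(A_i)| \leqslant 12(q+1)$ or $|N_{G_0}(S_2)| = 168$, and the spectrum constraint on the cyclic intersection $K_0 \cap A_i$ (whose order must lie in $\mathrm{Spec}({}^2G_2(q_0))$) further bounds $|K_0 \cap A_i|$ by a quantity depending only on $q_0$, so $|K_0|$ falls well short of $|{}^2G_2(q_0)|$ as soon as $r \geqslant 5$. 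Finally, once $K_0 \leqslant B = Q{:}H$ for a Borel subgroup $B$ of $G_0$, the condition $|(G_0)_\omega|_3 = q_0^3 > 1$ forces $K_0 \cap Q \neq 1$, and since $|H|$ is coprime to $3$, $K_0 \cap Q$ is the unique Sylow $3$-subgroup of $K_0$, hence characteristic in $K_0$ and normalized by $K$. A Borel--Tits-type argument (using that $G_0$ has Lie rank $1$, so the normalizer of any non-trivial $3$-subgroup lies in a unique Borel) then gives $N_G(K_0 \cap Q) \leqslant N_G(Q)$, so $K \leqslant N_G(Q)$, which is the normalizer of a Sylow $3$-subgroup of $G$. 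The main obstacle I expect is the ${}^2G_2(q_0)$-subfield subcase, where one must combine $G_0$-conjugacy of subfield subgroups with the self-normalization of $(G_0)_\omega$ to force the stability of $K$.
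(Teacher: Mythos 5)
Your overall route is the same as the paper's: pass to $K_0=K\cap G_0$ via Lemmas~\ref{lem:redu} and \ref{lem:spec}, run through Levchuk's lists of insoluble and soluble subgroups (Lemmas~\ref{nonsol} and \ref{solgps}) comparing orders and spectra with ${}^2G_2(q_0)$, and conclude that $K_0$ lies in a Borel subgroup. Your treatment of the subcase $K_0\cong{}^2G_2(q_0)$ (conjugacy of subfield subgroups, self-normalization of the maximal subgroup, then non-stability of $K$) is in fact more explicit than the paper's one-line spectrum claim, which does not literally apply when $q'=q_0$ since $(q_0-1)/2$ does lie in $\mathrm{Spec}({}^2G_2(q_0))$; and your final step from $K_0\leqslant B$ to $K\leqslant N_G(Q)$ is something the paper defers to Lemma~\ref{lem:subfield2}.

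There are, however, two genuine gaps. First, you assume at the outset that $r\neq 3$, ``so $r\geqslant 5$''. That restriction is not in the hypothesis of this lemma: it is only proved later, in Lemma~\ref{subfield}, and that proof invokes the present lemma with $r=3$, so your assumption is circular and your argument as written omits a case that is needed downstream. (The repair is easy — for $r=3$ all of $N_{G_0}(A_i)$, $0\leqslant i\leqslant 3$, and $N_{G_0}(S_2)$ have order $O(q)=O(q_0^3)<|{}^2G_2(q_0)|$, so order alone suffices — but it must be said.) Second, the claim that ``primality of $r$ leaves only $q'\in\{q_0,q\}$'' in the ${}^2G_2(q')$ subcase is false: for instance with $q=3^{15}$, $r=5$, $q_0=3^3$, the subgroup ${}^2G_2(3^5)$ meets the order bound and is neither ${}^2G_2(q_0)$ nor $G_0$. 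Such intermediate $q'$ must be excluded by the same spectrum-versus-order comparison you use for $\PSL_2(q')$ (once $q'>q_0$, elements of order $(q'-1)/2$ exceed every element order of ${}^2G_2(q_0)$), so the fix is routine, but the case is missing as written. A smaller point: ``$|(G_0)_\omega|_3=q_0^3>1$ forces $K_0\cap Q\neq 1$'' is not a valid inference, since a lower bound on $|K_0|$ does not force $3\mid |K_0|$; moreover $q-1$ can exceed $|{}^2G_2(q_0)|$ when $r\geqslant 7$, so if $K_0\cap Q=1$ you need the spectrum/gcd bound on the cyclic image of $K_0$ in $B/Q\cong \CC_{q-1}$ (which caps $|K_0|$ at roughly $2(q_0-1)$), not a bare order count.
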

\begin{proof}
	We first claim that $K$ is soluble.
	Assume the contrary that $K$ is insoluble.
	Then $K_0$ is isomorphic to one of the four subgroups listed in Lemma~\ref{nonsol}.
	Note that $K_0$ is a fixer of $G_0$ with $|K_0|\geqslant |(G_0)_\omega|$ by Lemma~\ref{lem:redu}.
	It follows that $K_0$ is isomorphic to one of $\PSL_2(q'), \CC_2 \times \PSL_2(q')$ and ${}^2G_2(q')$ with $q'\geqslant q_0.$
	Thus, $K_0$ contains a cyclic subgroup of order $\frac{q'-1}{2},$ but $(G_0)_\omega$ does not contain any element of such order, a contradiction.
	This completes the claim.
	
	Thus, $K_0$ is conjugate to a subgroup of the six groups listed in Lemma~\ref{solgps}.
	We next deal with these six cases, and conclude $K_0$ is conjugate to a subgroup of a maximal Borel subgroup.
	Assume first $K_0$ is conjugate to a subgroup of $N_{G_0}(A_0).$
	Note that $\gcd(|(G_0)_\omega|, |A_0|)=\frac{q_0-1}{2},$ $K_0$ is conjugate to a subgroup of $D_{q_0-1}$ by Lemma~\ref{lem:spec}, and hence $|K_0|< |(G_0)_\omega|$, a contradiction.
	
	Assume next $K_0$ is conjugate to a subgroup of $N_{G_0}(A_1).$
	If $q=q_0^3,$ then 
	\begin{equation*}
		|K_0|=2^33\times \frac{q+1}{4}< 2^3q_0^3\frac{q_0-1}{2}\frac{q+1}{4}=|(G_0)_\omega|,
	\end{equation*}
	a contradiction.
	Thus, $q\neq q_0^3,$ and hence $\gcd(|A_1|,|(G_0)_\omega|)=\frac{q_0+1}{4}.$
	Therefore,
	\begin{equation*}
		|K_0|\leqslant 6(q_0+1) < |(G_0)_\omega|,
	\end{equation*}
	a contradiction.
	
	Assume $K_0$ is conjugate to a subgroup of $N_G(A_i),$ $i\in \{2,3\}.$
	Then $\gcd(|A_i|,|(G_0)_\omega|)$ is one of $q_0+\sqrt{3q_0}+1$ and  $q_0-\sqrt{3q_0}+1,$ and hence
	\begin{equation*}
		|K_0|\leqslant 6(q_0+\sqrt{3q_0}+1)<|(G_0)_\omega|.
	\end{equation*} 
	
	If $K_0$ is conjugate to a subgroup of the normalizer of a Sylow $2$-subgroup, then $|K_0|=168<|(G_0)_\omega|,$ a contradiction.
	
	Therefore, $K_0$ is conjugate to a subgroup of a Borel subgroup, this completes the proof.
\end{proof}

\subsection*{Proof of Lemma~\ref{lem:reds}}
Lemma~\ref{lem:reds} holds by Lemma~\ref{f:borel}$\sim$\ref{f:subfi}.

\section{Proof of Theorem~\ref{thm:main}}\label{sec:proof}
In this section, we further deal with the two cases in Lemma~\ref{lem:reds}.
We first analyze the $G_0$-classes of elements of order $9$ in $(G_0)_\omega \cong {}^2G_2(q_0)$ in Lemma~\ref{subfie_uni}, and eliminate the case $r=3.$
Then we involve in the field automorphisms of $G_0\cong {}^2G_2(q)$ and prove Theorem~\ref{thm:main}.

\begin{lem}\label{subfie_uni}
	Let $q=q_0^r,$ where $r$ is an odd prime, and let $(G_0)_\omega=C_{G_0}(\phi^r).$
	If $3$ is coprime to $[\mathbb{F}_q : \mathbb{F}_{q_0}],$ then all elements of order $9$ is $G_0$-conjugate to some element in $(G_0)_\omega$, if $3$ divides $[\mathbb{F}_q : \mathbb{F}_{q_0}],$ then $(G_0)_\omega$ intersects non-trivially with a unique $G_0$-class of elements of order $9$. 
	In particular, if $3$ divides $[\mathbb{F}_q:\mathbb{F}_{q_0}],$ then each element of order $9$ in $G_0$ is conjugate to its inverse in $G_0$.
\end{lem}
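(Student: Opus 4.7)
The plan is to analyze the three $G_0$-classes of order-$9$ elements of $G_0$ and to determine which of them intersect $(G_0)_\omega$. By Lemma~\ref{conj_uni}, these classes are $x_1^{G_0}$, $(x_1^{-1})^{G_0}$ and $x_2^{G_0}$, where $x_1 = X_S(1,u,0)$ with $\mathrm{Tr}_{\mathbb{F}_q/\mathbb{F}_3}(u-1)\ne 0$ and $x_2 = X_S(1,1,0)$. To make Lemma~\ref{conj9} more tractable, I would substitute $t' = ts$ to rewrite its criterion in the form
\[
u/t^{3\theta+1} - 1 = s - s^{3\theta} \quad \text{for some } s\in \mathbb{F}_q.
\]
Combining this with the identification $A = \ker \mathrm{Tr}_{\mathbb{F}_q/\mathbb{F}_3}$ from Section~\ref{p:field}, an element $X_S(t,u,v)$ with $t\ne 0$ lies in $x_2^{G_0}$ precisely when $\mathrm{Tr}_{\mathbb{F}_q/\mathbb{F}_3}(u/t^{3\theta+1}-1) = 0$, and in $x_1^{G_0}\cup (x_1^{-1})^{G_0}$ otherwise.

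I would then observe that each order-$9$ element of $(G_0)_\omega \cong {}^2G_2(q_0)$ is conjugate inside $(G_0)_\omega$, and hence inside $G_0$, to some $X_S(t,u,v)$ with $t,u,v\in \mathbb{F}_{q_0}$ and $t\ne 0$, since these triples form the unipotent radical of a Borel subgroup of $(G_0)_\omega$. For such an element $u/t^{3\theta+1}-1\in \mathbb{F}_{q_0}$, and transitivity of the trace gives
\[
\mathrm{Tr}_{\mathbb{F}_q/\mathbb{F}_3}(\lambda) = \mathrm{Tr}_{\mathbb{F}_{q_0}/\mathbb{F}_3}\bigl(\mathrm{Tr}_{\mathbb{F}_q/\mathbb{F}_{q_0}}(\lambda)\bigr) = r\,\mathrm{Tr}_{\mathbb{F}_{q_0}/\mathbb{F}_3}(\lambda) \quad \text{for all } \lambda\in \mathbb{F}_{q_0}.
\]

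With this in place, the two cases split cleanly. If $3\nmid r$, then $r$ is a unit modulo $3$, so the vanishing of $\mathrm{Tr}_{\mathbb{F}_q/\mathbb{F}_3}$ on $\mathbb{F}_{q_0}$ coincides with that of $\mathrm{Tr}_{\mathbb{F}_{q_0}/\mathbb{F}_3}$. Choosing $u_0 \in \mathbb{F}_{q_0}$ with $\mathrm{Tr}_{\mathbb{F}_{q_0}/\mathbb{F}_3}(u_0-1)\ne 0$, which exists by surjectivity of the trace, the elements $X_S(1,1,0)$, $X_S(1,u_0,0)$ and $X_S(1,u_0,0)^{-1}$ all lie in $(G_0)_\omega$; they represent $x_2^{G_0}$, one of $\{x_1^{G_0},(x_1^{-1})^{G_0}\}$, and the other, respectively. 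Hence every order-$9$ element of $G_0$ has a $G_0$-conjugate in $(G_0)_\omega$. If instead $3\mid r$, then $r=3$ since $r$ is prime, and the displayed identity forces $\mathrm{Tr}_{\mathbb{F}_q/\mathbb{F}_3}(\lambda)=0$ for every $\lambda\in \mathbb{F}_{q_0}$; so every order-$9$ element of $(G_0)_\omega$ satisfies the trace criterion and lies in $x_2^{G_0}$, which is therefore the unique $G_0$-class meeting $(G_0)_\omega$. Since $x_2 = X_S(1,1,0)$ itself satisfies $u/t^{3\theta+1}-1 = 0 \in A$, the criterion gives $x_2\sim x_2^{-1}$ in $G_0$, and hence every order-$9$ element of $(G_0)_\omega$ is $G_0$-conjugate to its inverse.

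The only genuinely technical step is the repackaging of Lemma~\ref{conj9} as a vanishing-trace condition on $u/t^{3\theta+1}-1$; once that is done, transitivity of the trace, together with the dichotomy between $3\nmid r$ and $3\mid r$ in the factor $r$, gives both assertions without further computation.
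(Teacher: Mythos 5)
Your proposal is correct and follows essentially the same route as the paper: both rest on Lemma~\ref{conj9} together with the additive Hilbert~90 identification of $A=\ker\mathrm{Tr}_{\mathbb{F}_q/\mathbb{F}_3}$, transitivity of the trace with the factor $r$, and the representatives $X_S(1,1,0)$, $X_S(1,u_0,0)$, $X_S(1,u_0,0)^{-1}$ from Lemma~\ref{conj_uni}, with the same dichotomy $3\nmid r$ versus $r=3$. Your packaging of Lemma~\ref{conj9} as the vanishing of $\mathrm{Tr}_{\mathbb{F}_q/\mathbb{F}_3}(u/t^{3\theta+1}-1)$ is exactly the substitution $t'=ts$ that the paper performs inside its proof, and like the paper you prove the final ``conjugate to its inverse'' claim for order-$9$ elements of $(G_0)_\omega$, which is the intended reading of the statement.
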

\begin{proof}
	Assume first $3$ is coprime to $[\mathbb{F}_q :\mathbb{F}_{q_0}]$.
	We take $\lambda\in \mathbb{F}_{q_0}$ such that $\mathrm{Tr}_{\mathbb{F}_{q_0}/\mathbb{F}_3}(\lambda)\neq 0,$ it follows that 
	\begin{equation*}
		\mathrm{Tr}_{\mathbb{F}_q/\mathbb{F}_3}(\lambda)=\mathrm{Tr}_{\mathbb{F}_{q_0}/\mathbb{F}_3}(\mathrm{Tr}_{\mathbb{F}_q/\mathbb{F}_{q_0}}(\lambda))=[\mathbb{F}_q : \mathbb{F}_{q_0}]\mathrm{Tr}_{\mathbb{F}_{q_0}/\mathbb{F}_3}(\lambda)\neq 0.
	\end{equation*}	
	Thus, $\lambda\neq t'-(t')^{3\theta}$ for any $t'\in \mathbb{F}_q$ by additive form of Hilbert 90, and hence $x_1=X_S(1, \lambda+1, 0)$ is not conjugate to $x_1^{-1}$ in $G_0={}^2G_2(q)$ by Lemma~\ref{conj9}.
	Therefore, $x_1,x_1^{-1}$ and $x_2=X_S(1,1,0)$ are representatives for three $G$-classes of elements of order $9$ by Lemma~\ref{conj9}, and they are all in $(G_0)_\omega.$
	
	Assume next $3$ divides $[\mathbb{F}_q:\mathbb{F}_{q_0}].$
	Let $x$ be an element of order $9$ in $(G_0)_\omega,$ we claim that $x$ is conjugate to $x^{-1}$ in $G.$
	To see this, note that $x=X_S(t,u,v)$ for some $t\in \mathbb{F}_{q_0}^\times$ and $u,v\in \mathbb{F}_{q_0}.$
	Also note that for any $\lambda\in \mathbb{F}_{q_0},$ 
	\begin{equation*}
		\mathrm{Tr}_{\mathbb{F}_q/\mathbb{F}_3}(\lambda)=\mathrm{Tr}_{\mathbb{F}_{q_0}/\mathbb{F}_3}(\mathrm{Tr}_{\mathbb{F}_q/\mathbb{F}_{q_0}}(\lambda))=\mathrm{Tr}_{\mathbb{F}_q/\mathbb{F}_{q_0}}(0)=0.
	\end{equation*}
	Thus, $\mathrm{Tr}_{\mathbb{F}_q/\mathbb{F}_3}(\frac{u}{t^{1+\theta}}-1)=0,$ and hence there exists $t'\in \mathbb{F}_{q_0}$ such that $\frac{u}{t^{3\theta+1}}-1=\frac{t'}{t}-(\frac{t'}{t})^{3\theta}.$
	Therefore, $u-t^{3\theta+1}=t^{3\theta}t'-(t')^{3\theta}t,$ and $x$ is conjugate to $x^{-1}$ by Lemma~\ref{conj9}.
	This completes the proof.
\end{proof}
\begin{lem}\label{subfield}
	Assume $G_\omega$ is a maximal subfield subgroup of $G$ such that $(G_0)_\omega \cong {}^2G_2(q_0)$, where $q=q_0^r$ for some prime $r$.
    Let $E_0=C_{Z(Q)}(\phi^r),$ where $Q$ is the unipotent subgroup of $G_0$ defined in Lemma~\ref{Borel}.
	Then $r\neq 3$, and $K_0$ is conjugate to a subgroup of $N_{G_0}(E_0)=Q{:}H(q_0),$ where  
	\begin{equation*}
		H(q_0)=\{h(s),s\in \mathbb{F}_{q_0}^\times\},
	\end{equation*}
   and $h(s)$ is the element defined in Lemma~\ref{Borel}.
	If further $G=G_0,$ then $N_{G_0}(E_0)$ is a fixer of $G_0.$ 
\end{lem}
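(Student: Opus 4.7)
The plan is to combine the reduction in Lemma~\ref{f:subfi} with the spectrum constraint from Lemma~\ref{lem:spec} to locate $K_0$ inside $N_{G_0}(E_0)$, and then to rule out $r=3$ via the information on $G_0$-classes of elements of order $9$ from Lemma~\ref{subfie_uni} together with the dimensional bound of Lemma~\ref{lem:dimintA}.

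First, by Lemma~\ref{f:subfi} and after conjugation (Lemma~\ref{p:fixers}\,(iv)) we may assume $K_0 \leqslant B = QH$. The image $\overline{K_0} = K_0Q/Q$ is cyclic in $H \cong C_{q-1}$ of some order $e$, and a lift of a generator has order $e\cdot 3^a$ with $a\in\{0,1,2\}$. Lemma~\ref{lem:spec} forces this order into $\mathrm{Spec}((G_0)_\omega)$. Using $\gcd(3,q-1)=1$ together with the observation that the only "exotic" cyclic Hall orders in $\mathrm{Spec}({}^2G_2(q_0))$ (those coming from $A_1,A_2,A_3$) fail to divide $q-1$, one deduces $e\mid q_0-1$. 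Since $H$ is cyclic, $\overline{K_0}\leqslant H(q_0)$, whence $K_0 \leqslant QH(q_0)$. A routine check that $N_{G_0}(E_0)=QH(q_0)$, using that $h(s)$ acts on $Z(Q)\cong\mathbb{F}_q$ by multiplication by $s^{-1}$ and so stabilizes $E_0\cong\mathbb{F}_{q_0}$ iff $s\in\mathbb{F}_{q_0}^\times$, identifies this containment as $K_0\leqslant N_{G_0}(E_0)$.

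To eliminate $r=3$, assume for contradiction $r=3$. By Lemma~\ref{subfie_uni}, every element of order $9$ in $(G_0)_\omega$ lies in the single (self-inverse) $G_0$-class $x_2^{G_0}$, so by Lemma~\ref{p:fixers}\,(iii) and Lemma~\ref{conj9}, every $X_S(t,u,v)\in K_0\cap Q$ with $t\neq 0$ satisfies $u\in t^{3\theta+1}(1+A)$, where $A=\ker\mathrm{Tr}_{\mathbb{F}_q/\mathbb{F}_3}$. Let $T,U_0,V_0$ denote the $\mathbb{F}_3$-images of $K_0\cap Q$ under the $t$-, $u$-, $v$-coordinate projections, so $|K_0\cap Q|=|T|\,|U_0|\,|V_0|$, and the fibering forces $U_0\subseteq \bigcap_{t\in T\setminus\{0\}} t^{3\theta+1}A$. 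Applied to $\{t^{3\theta+1}:t\in T\setminus\{0\}\}$, Lemma~\ref{lem:dimintA} yields
\[
\dim_{\mathbb{F}_3} U_0 + \dim_{\mathbb{F}_3}\mathrm{span}_{\mathbb{F}_3}\bigl\{t^{-(3\theta+1)}:t\in T\setminus\{0\}\bigr\}\leqslant 2n+1.
\]
A short Euclidean computation gives $\gcd(3\theta+1,q-1)=2$, so $t\mapsto t^{3\theta+1}$ is exactly $2$-to-$1$ on $\mathbb{F}_q^\times$; since $T=-T$, the image $\{t^{3\theta+1}:t\in T\setminus\{0\}\}$ has $(|T|-1)/2$ elements, and this set, sitting inside an $\mathbb{F}_3$-subspace, has $\mathbb{F}_3$-span of cardinality $\geqslant|T|$ (immediate for $|T|=3$ and from $(3^s-1)/2>3^{s-1}$ for $s\geqslant 2$). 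Hence $|U_0|\leqslant q/|T|$ and $|K_0\cap Q|\leqslant q\cdot|V_0|\leqslant q^2$; the case $T=\{0\}$ forces $K_0\cap Q\leqslant Q'$ and so gives the same bound trivially. Combining with $|K_0|\leqslant (q_0-1)|K_0\cap Q|$, we obtain $|K_0|\leqslant (q_0-1)q^2 < (q_0-1)q(q+1)=|(G_0)_\omega|$, contradicting $|K_0|\geqslant|(G_0)_\omega|$. Therefore $r\neq 3$.

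Finally, for the fixer claim when $G=G_0$, each element of $N_{G_0}(E_0)=QH(q_0)$ has the form $qh(s)$ with $s\in\mathbb{F}_{q_0}^\times$. When $s=1$, Lemma~\ref{subfie_uni} (now with $r\neq 3$) together with the analogous statement for order $3$ shows every unipotent of $G_0$ is $G_0$-conjugate to an element of $(G_0)_\omega$. When $s\neq 1$, the identity $\gcd(3\theta-2,q-1)=1$ forces the $h(s)$-action on $Q/Q'$ to be fixed-point-free away from $0$; a direct $Q$-conjugation calculation, supplemented by an $H$-twist in the delicate case $s=-1$ (where the $Q$-invariant $u-t^{3\theta+1}$ must be moved into $\mathbb{F}_{q_0}$ using that $\{s^{1-3\theta}:s\in\mathbb{F}_q^\times\}$ has index $2$ in $\mathbb{F}_q^\times$), shows $qh(s)$ is $G_0$-conjugate to an element of $Q(q_0)H(q_0)\leqslant (G_0)_\omega$, completing the proof.

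The hardest step is ruling out $r=3$: the dimensional count must be sharp enough to force $|K_0\cap Q|\leqslant q^2$, which in turn rests on the arithmetic identity $\gcd(3\theta+1,q-1)=2$ that underlies $|\mathrm{span}_{\mathbb{F}_3}\{t^{-(3\theta+1)}\}|\geqslant|T|$.
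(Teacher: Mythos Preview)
Your argument for ruling out $r=3$ matches the paper's almost exactly: both reduce to the constraint $u\in t^{3\theta+1}A$ for order-$9$ elements of $K_0\cap Q$, invoke Lemma~\ref{lem:dimintA} to get $\dim T+\dim U_0\leqslant 2n+1$, and finish with the counting $|\{t^{-(3\theta+1)}:t\in T\setminus\{0\}\}|>\tfrac12|T|$ (you make the $\gcd(3\theta+1,q-1)=2$ computation explicit, the paper leaves it implicit). This part is fine.

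Where you genuinely diverge is in the first step, the containment $K_0\leqslant Q{:}H(q_0)$. The paper uses the $7$-dimensional representation of Lemma~\ref{7dimrep}: since $(G_0)_\omega\leqslant\GL_7(q_0)$, any element of $K_0\leqslant B$ is $\GL_7(q)$-conjugate to a matrix over $\mathbb{F}_{q_0}$, and for an upper-triangular matrix this forces the diagonal (hence the $H$-part) into $\mathbb{F}_{q_0}$. Your spectrum route is more elementary but the assertion that the ``exotic'' Hall orders $|A_1|,|A_2|,|A_3|$ of ${}^2G_2(q_0)$ have no common factor with $q-1$ beyond $q_0-1$ is stated without proof; it is true (one checks $\gcd(q_0+1,q_0^r-1)=2$ for $r$ odd, and $\gcd(q_0^2-q_0+1,q_0^r-1)\mid q_0-1$ via $\gcd(q_0^6-1,q_0^r-1)=q_0^{\gcd(6,r)}-1$), but this arithmetic should be supplied. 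The paper's representation-theoretic argument sidesteps all of this in one line.

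For the fixer claim when $r\neq3$, the paper argues that every element of $Q{:}H(q_0)$ is either unipotent (and then Lemmas~\ref{conj_uni} and~\ref{subfie_uni} place every unipotent class in $(G_0)_\omega$) or is $G_0$-conjugate into $H(q_0)\leqslant(G_0)_\omega$ by Hall's theorem; your direct $Q$-conjugation followed by an $H$-twist in the $s=-1$ case reaches the same conclusion by a different path. Your phrase ``the $Q$-invariant $u-t^{3\theta+1}$'' is slightly misplaced here (after killing the $t$-coordinate one is left with $X_S(0,u',0)h(-1)$, and the relevant invariant is the square class of $u'$), but the idea---that conjugation by $h(s')$ rescales $u'$ by $(s')^{1-3\theta}$, whose image is the index-$2$ subgroup of squares, and both square classes meet $\mathbb{F}_{q_0}^\times$ since $[\mathbb{F}_q:\mathbb{F}_{q_0}]$ is odd---is correct.
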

\begin{proof}
	By Lemma~\ref{f:subfi}, we may assume $K_0$ is conjugate to a subgroup of $B=Q{:}H$ defined in \ref{Borel}.
	In view of Lemma~\ref{p:fixers} \rm(iv), we may replace $K$ with $K^g$ and $G_\omega$ with $G_\omega^{g'}$ for some suitable $g, g'\in G,$ and assume $K_0\leqslant B=Q{:}H,$ and $G_\omega=C_G(\phi^r).$ 
	We note that under the $7$-dimensional representation $\rho$ over $\mathbb{F}_q,$ the elements in $\rho(B)$ that are conjugate to some matrices with each entries in $\mathbb{F}_{q_0}$ in $\GL_7(q)$ coincides with those upper-triangular matrices with diagonal entries in $\mathbb{F}_{q_0},$ which is $Q{:}H(q_0)=\GL_7(q_0)\cap \rho(B).$
	Thus, $K_0\leqslant Q{:}H(q_0)=N_{G_0}(E_0),$ by noting that $(G_0)_\omega\leqslant \GL_7(q_0).$
	
	Assume $3\neq [\mathbb{F}_q:\mathbb{F}_{q_0}]$. Then combining Lemma~\ref{conj_uni} and \ref{subfie_uni}, $(G_0)_\omega$ contains all $G_0$-classes of unipotent elements.
	On the other hand, each element in $N_{G_0}(E_0)=Q{:}H(q_0)$ is either conjugate to $Q$ in $G_0$ or conjugate to $H(q_0)\leqslant (G_0)_\omega$ in $G_0$ by Hall's theorem.
	Thus, each element in $N_{G_0}(E_0)$ is conjugate to an element in $(G_0)_\omega,$ and hence $N_{G_0}(E_0)$ is indeed a fixer of $G_0.$ 
	
	Finally, we consider the case $3=[\mathbb{F}_q:\mathbb{F}_{q_0}].$
	Since each element of $K_0$ is conjugate to some element in $(G_0)_\omega,$ it follows that  each element of order $9$ in $(G_0)_\omega$ is conjugate to its inverse by Lemma~\ref{subfie_uni}.
	We next prove $|K\cap Q|\leqslant q^2.$ 
	To see this, let $x=X_S(t,u,v)\in K\cap Q,$ we first claim that $u\in t^{3\theta+1}A,$ where $A$ is defined in Section~\ref{p:field}. 
	Note that $x$ is conjugate to $x^{-1},$ and hence there exists $t'\in \mathbb{F}_q$ such that $u-t^{3\theta+1}=-t(t')^{3\theta}+t't^{3\theta}$ by Lemma~\ref{conj9}.
	Since
	\begin{equation*}
		\{-t(t')^{3\theta}+t't^{3\theta} \mid t'\in \mathbb{F}_q\}=\{t^{3\theta+1}(-(\frac{t'}{t})^{3\theta}+\frac{t'}{t})\mid t'\in \mathbb{F}_q\}=t^{3\theta+1}A,
	\end{equation*}
	it follows that $u-t^{3\theta+1}\in t^{3\theta+1}A.$
	Thus, it remains to show that $t^{3\theta+1}\in t^{3\theta+1}A,$ which is equivalent to $1\in A.$
	To see this, note that $$\mathrm{Tr}_{\mathbb{F}_q/\mathbb{F}_3}(1)=2n+1=0,$$
	applying additive form of Hilbert 90 there exists $t'\in \mathbb{F}_q$ such that $1=-(t')^{3\theta}+t',$ yielding $1\in A.$ 
	Therefore, both $t^{3\theta+1}$ and $u-t^{3\theta+1}$ are in $t^{3\theta+1}A,$ and hence $u\in t^{3\theta+1}A,$ this proves the claim.
	Note that the image of $X_S(t,u,v)Q'$ in $Q/Q'$ is determined by $t$,
    and hence $K\cap Q/(K\cap Q')$ can be identified with the following additive subgroup of $\mathbb{F}_q$ 
    \begin{equation*}
    \{t\in \mathbb{F}_q \mid \text{ there exists some $u,v \in \mathbb{F}_q$ such that $X_S(t,u,v)\in K$}\}.    
    \end{equation*}
	Similarly, $K\cap Q'/(K\cap Z(Q))$ can be identified with the additive subgroup 
    $$\{u\in \mathbb{F}_q\mid \text{there exists $v\in \mathbb{F}_q$ such that $X_S(0,u,v)\in K$} \}.$$

	We next prove that 
	\begin{equation}\label{e:dim}
		\mathrm{dim}_{\mathbb{F}_3}(K\cap Q/(K\cap Q'))+\mathrm{dim}_{\mathbb{F}_3}(K\cap Q'/K\cap Z(Q))\leqslant 2n+1.
	\end{equation}
	To see this, note that
	for each $u\in K\cap Q'/(K\cap Z(Q))$ and each $t\in K\cap Q/(K\cap Q'),$ there exists some $u_1,v_1$ and $v_2$ in $\mathbb{F}_q$ such that $X_S(t,u_1,v_1)\in K$ and $X_S(0,u,v_2)\in K.$
	Thus,  
	\begin{equation*}
		X_S(t,u_1,v_1)X_S(0,u,v_2)^{-1}=X_S(t,u_1-u,v_1-v_2)\in K\cap Q,
	\end{equation*}
	and hence both $u_1$ and $u_1-u$ are in $t^{3\theta+1}A,$ yielding $u\in t^{3\theta+1}A.$
	As a result, 
	\begin{equation*}
		u\in \bigcap_{t\in K\cap Q/K\cap Q'} t^{3\theta+1}A,
	\end{equation*}
	yielding $$\mathrm{dim}_{\mathbb{F}_3}(K\cap Q'/(K\cap Z(Q)))\leqslant \mathrm{dim}_{\mathbb{F}_3}(\bigcap_{t\in K\cap Q/K\cap Q'} t^{3\theta+1}A).$$
	On the other hand, Lemma~\ref{lem:dimintA} implies that 
	$$\mathrm{dim}_{\mathbb{F}_3}(\bigcap\limits_{t\in K\cap Q/K\cap Q'} t^{3\theta+1}A)=2n+1-\mathrm{dim}_{\mathbb{F}_3}(\mathrm{span}\{t^{-(3\theta+1)}\mid t\in K\cap Q/(K\cap Q')\}).$$
	As a result, to verify (\ref{e:dim}), it suffices to show that
	\begin{equation*}
		\mathrm{dim}_{\mathbb{F}_3}(K\cap Q/(K\cap Q'))\leqslant \mathrm{dim}_{\mathbb{F}_3}(\mathrm{span}\{t^{-(3\theta+1)}\mid t\in K\cap Q/(K\cap Q')\}),
	\end{equation*}
	which is simply by noting that 
	\begin{equation*}
		\begin{aligned}
			|\mathrm{span}_{\mathbb{F}_3}\{t^{-(3\theta+1)}\mid t\in K\cap Q/(K\cap Q')\}|&\geqslant |\{t^{-(3\theta+1)}\mid t\in K\cap Q/(K\cap Q')\}|\\
			&>\frac{1}{2}|K\cap Q/(K\cap Q')|.
		\end{aligned}
	\end{equation*} 
	Therefore, (\ref{e:dim}) holds, and hence $|K\cap Q|\leqslant q^2.$
	Thus, $|K_0|\leqslant q^2q_0<|(G_0)_\omega|,$ which contradicts to Lemma~\ref{lem:redu}.
	
	This completes the proof.
\end{proof}

Now we consider the case $G_0\trianglelefteqslant G \trianglelefteqslant \Aut(G_0),$ and $G\neq G_0.$
Let $\phi\in \Aut(G_0)$ be the Frobenius automorphism which sends each Chevalley element $x_r(t)$ to $x_r(t^3),$ where $r\in \Phi$ and $t\in \mathbb{F}_q.$ 
Then $\Aut(G_0)=G_0{:}\la \phi \ra,$ and $G=G_0{:}\la \psi \ra$ for some $\psi \in \la \phi \ra.$

We next deal with Case \rm(i) of Lemma~\ref{lem:reds} by Lemma~\ref{lem:subfield1} and \ref{lem:subfield2}, we deal with Case \rm(ii) of Lemma~\ref{lem:reds} by Lemma~\ref{lem:q27}, then we finish the proof of Theorem~\ref{thm:main}.
\begin{lem}\label{lem:subfield1}
	Assume $G_\omega$ is a maximal subfield subgroup of $G,$ such that $(G_0)_\omega\cong {}^2G_2(q_0),$ where $q=q_0^r$ and $r\neq 3$ is an odd prime.
	Then $N_{G}(E_0)=(Q{:}H(q_0)){:}\la \psi \ra$ is a fixer of $G,$ where $Q$ is a unipotent subgroup of $G_0$ defined in Lemma~\ref{Borel}, $E_0=C_{Z(Q)}(\phi^r),$ and 
	\begin{equation*}
		H(q_0)=\{h(s)\mid s\in \mathbb{F}_{q_0}^\times\}
	\end{equation*} 
is a subgroup of $H$ defined in Lemma~\ref{Borel}.
\end{lem}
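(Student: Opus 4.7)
My plan is to verify Lemma~\ref{lem:subfield1} by showing directly that every element of $N_G(E_0)$ is $G$-conjugate to an element of $G_\omega$, which yields the fixer property by Lemma~\ref{p:fixers}(iii). The first step is to pin down the structure $N_G(E_0) = (Q{:}H(q_0)){:}\langle\psi\rangle$: recalling $E_0 = C_{Z(Q)}(\phi^r)$ from Lemma~\ref{subfield} and $(G_0)_\omega = C_{G_0}(\phi^r)$ from Lemma~\ref{max_subgps}(iii), I would observe that $E_0$ is preserved by $\psi$, since $Z(Q)$ is $\langle\phi\rangle$-invariant and $\psi$ commutes with $\phi^r$ inside the abelian group $\langle\phi\rangle$. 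Combined with $N_{G_0}(E_0) = Q{:}H(q_0)$ from Lemma~\ref{subfield}, this yields the asserted semi-direct structure.

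Second, I would identify the $3'$-complement $K := H(q_0){:}\langle\psi\rangle$ as a subgroup of $G_\omega$: the elements $h(s)$ with $s\in\mathbb{F}_{q_0}^\times$ are fixed by $\phi^r$, so $H(q_0)\leq (G_0)_\omega$; and $\psi\in C_G(\phi^r) = G_\omega$ by commutativity inside $\langle\phi\rangle$. Hence $K \leq G_\omega$. For $x\in N_{G_0}(E_0) = N_G(E_0)\cap G_0$, Lemma~\ref{subfield} (whose conclusion applies since $r\neq 3$) already provides a $G_0$-conjugate in $(G_0)_\omega\leq G_\omega$, so only the outer-coset elements remain to be handled.

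For $x = y\psi^i$ with $y\in N_{G_0}(E_0)$ and $i\neq 0$, a direct computation using $\psi^i g \psi^{-i} = {}^{\psi^i}g$ gives $(y\psi^i)^g = (g^{-1}y\cdot{}^{\psi^i}g)\,\psi^i$ for $g\in G_0$, so the problem reduces to finding $g\in G_0$ with $g^{-1}y\cdot{}^{\psi^i}g \in (G_0)_\omega$. To attack this, I would first invoke Hall's theorem inside $N_{G_0}(E_0)$ to replace $y$ by an $N_{G_0}(E_0)$-conjugate of the form $us$ with $u\in Q$, $s\in H(q_0)$ and $[u,s]=1$; then $s\psi^i\in K\leq G_\omega$ plays the role of the ``semisimple'' part of $x$, and the remaining task becomes: locate $g$ in a suitable piece of the centralizer $C_G(s\psi^i)$ which carries $u\in C_Q(s)$ into $(G_0)_\omega$.

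The main obstacle is this last step. My plan is to adapt the unipotent-class analysis of Lemmas~\ref{conj_uni} and \ref{subfie_uni} --- which established that every $G_0$-class of unipotent elements meets $(G_0)_\omega$ when $r\neq 3$ --- to the $\psi^i$-twisted setting. The key input is a Lang--Steinberg-style surjectivity for the map $g\mapsto g^{-1}\cdot{}^{\psi^i}g$ on $G_0$; the commutativity of $\phi^r$ and $\psi^i$ in $\langle\phi\rangle$ keeps $(G_0)_\omega$ stable under the $\psi^i$-twisted action and should allow the algebraic-group solution to be descended to the finite group $G_0$. Executing this twisted Lang--Steinberg step and splicing it with the unipotent-class enumeration is the technically most delicate point of the argument.
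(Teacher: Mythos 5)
Your reduction of the problem to twisted conjugation (showing $(y\psi^i)^g=(g^{-1}y\,{}^{\psi^i}g)\psi^i$, so one must move $g^{-1}y\,{}^{\psi^i}g$ into $(G_0)_\omega$), and your use of Lemma~\ref{subfield} for the elements lying in $G_0$, match the paper's framework. But the core of the lemma --- the outer--coset elements --- is not actually proved, and the tool you propose for it does not exist in the form you need. The map $g\mapsto g^{-1}\,{}^{\psi^i}g$ on the \emph{finite} group $G_0$ is never surjective: its fibres are cosets of $C_{G_0}(\psi^i)$, so its image has index $|C_{G_0}(\psi^i)|$. Lang--Steinberg gives surjectivity only over $\overline{\mathbb{F}}_3$, and descending to $G_0$ is exactly the classification of $G_0$-classes in the coset $G_0\psi^i$, i.e.\ the whole difficulty. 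Indeed, surjectivity on $G_0$ would force every element $X_S(0,0,v)\psi^i$ to be conjugate to the pure field automorphism $\psi^i$, which is false whenever $\mathrm{Tr}_{\mathbb{F}_q/\mathbb{F}_{q_1}}(v)\neq 0$ ($\mathbb{F}_{q_1}$ the fixed field of $\psi^i$), since such an element has strictly larger order. What the paper proves is subtler: after twisted-conjugating the $Q$-part into a normal form, the residual obstruction is a relative trace in $\mathbb{F}_{q_1}^\times$, and one must \emph{match} it by an element of $\mathbb{F}_{q_0}$ with the same trace --- this is Lemma~\ref{trqq1}, and it is precisely where the hypothesis $r\neq 3$ enters --- and then finish with additive/multiplicative Hilbert 90. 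None of this is visible in your sketch, so the ``technically most delicate point'' you flag is a genuine gap, not a routine splice.

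There is also an internal inconsistency in your preliminary reduction. You propose to replace $y$ by an ordinary $N_{G_0}(E_0)$-conjugate $us$ with $u\in Q$, $s\in H(q_0)$, $[u,s]=1$; but conjugating $x=y\psi^i$ by $g$ moves $y$ by \emph{twisted} conjugation $y\mapsto g^{-1}y\,{}^{\psi^i}g$, not by ordinary conjugation, so this replacement is not available. Moreover $u$ need not commute with $s\psi^i$ (since $\psi^i$ acts nontrivially on the coordinates of $u$), so $s\psi^i$ is not the $3'$-part of $x$ and $u$ is not its $3$-part. The commuting decomposition has to be applied to $x$ itself, which is how the paper organizes the proof: four cases according to $\langle x\rangle\cap Q$ (trivial; order $3$ inside $Z(Q)$; order $3$ outside $Z(Q)$; order $9$), each resolved by explicit computations with elements $X_S(t,u,v)\phi^j$, Lemma~\ref{conj9}/\ref{conj_uni}/\ref{subfie_uni} for the unipotent classes, Lemma~\ref{trqq1}, and Hilbert 90. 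The order-$9$ case in particular requires showing that the relevant coordinates $t_1,u_1$ already lie in $\mathbb{F}_{q_0}$ before the final conjugation, an argument your plan does not address.
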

\begin{proof}
	In view of Lemma~\ref{lem:redu}, we may assume $G=\Aut(G_0)=G_0{:}\la \phi \ra,$ and hence $\psi=\phi.$ 
	Up to a conjugation in $G,$ we may assume $(G_0)_\omega=C_{G_0}(\phi^r)$ and $G_\omega=C_{G}(\phi^r)$.
	Let $K=N_G(E_0)=(Q{:}H(q_0)){:}\la \phi \ra\leqslant G,$ and let $x$ be an arbitrary element in $K_0$, we aim to prove that $x$ is conjugate to an element in $G_\omega$ in $G.$
	By Lemma~\ref{uni}, one of the following four cases hold: 
	\rm(i) $|\la x \ra \cap Q|=3$ and $\la x \ra\cap Q\leqslant Z(Q)$;
	\rm(ii) $\la x \ra \cap Q=1$;
	\rm(iii) $|\la x \ra\cap Q|=3$ and $\la x \ra\cap Z(Q)=1$;
	\rm(iv) $|\la x \ra \cap Q|=9.$
	We then deal with these four cases one by one. 
	
	\subsubsection*{Case \rm(i) $|\la x \ra \cap Q|=3$ and $\la x \ra\cap Q\leqslant Z(Q)$ }\
	 
	Note that there exists $h\in H$ such that $\la x^h \ra \cap Q=\la X_S(0,0,1) \ra$ by Lemma~\ref{Borel} and Lemma~\ref{uni}, it follows that $$x^h\in C_{N_G(Q)}(X_S(0,0,1))=Q{:}\la \phi \ra.$$ 
	So there exists $t, u, v\in \mathbb{F}_q$ and some integer $j$ such that $x^h=X_S(t,u,v)\phi^j.$
	We next claim that there exists $g\in Q$ such that $x^{hg}\in Z(Q){:}\la \phi \ra$.
	To see this, let $k=|\phi^j|,$ then 
	\begin{equation*}
		(x^h)^k=X_S(t+t^{\phi^j}+\dots+ t^{\phi^{(k-1)j}}, u_1, v_1)=X_S(0,0,v_1), 
	\end{equation*}
	where $v_1\in \mathbb{F}_3^\times.$
	Thus, $t+t^{\phi^j}+\dots+ t^{\phi^{(k-1)j}}=0,$ and hence there exists $t_1\in \mathbb{F}_q$ such that $t=t_1^{\phi^{-j}}-t_1$ by additive form of Hilbert 90, yielding 
	\begin{equation*}
		x^{hX_S(-t_1,0,0)}=(X_S(t,u,v)\phi^j)^{X_S(-t_1,0,0)}=X_S(t_1+t-t_1^{\phi^{-j}},u_0,v_0)\phi^j=X_S(0,u_0,v_0)\phi^j,
	\end{equation*}
    for some $u_0,v_0 \in \mathbb{F}_q,$ and hence $x^{hX_S(-t_1,0,0)}\in Q'\phi^j.$
	By a similar argument, there exists $u_1 \in \mathbb{F}_q$ such that $x^{hX_S(-t_1,0,0)X_S(0,u_1,0)}\in Z(Q)\phi^j.$
	Thus, the claim holds by taking $g$ to be $X_S(t_1,0,0)X_S(0,u_1,0).$
	So there exists $v_1\in \mathbb{F}_q$ such that $x^{hg}=X_S(0,0,v_1)\phi^j.$
	Let $q=q_1^k,$ then $\mathbb{F}_{q_1}$ is the fixed field of $\la \phi^j \ra$ on $\mathbb{F}_q.$
	Since \begin{equation*}
	(x^{hg})^k=X_S(0,0,v_1+v_1^{\phi^j}+\dots+v_1^{\phi^{(k-1)j}})=X_S(0,0,\mathrm{Tr}_{\mathbb{F}_q/\mathbb{F}_{q_1}}(v_1))\in \la x^{hg} \ra\cap Q
	\end{equation*}
    and $|\la x^{hg} \ra\cap Q|=3,$
    it follows that $\mathrm{Tr}_{\mathbb{F}_q/\mathbb{F}_{q_1}}(v_1)\in \mathbb{F}_{q_1}^\times.$
	Let $\lambda\in \mathbb{F}_{q_0}$ such that $\mathrm{Tr}_{\mathbb{F}_q/\mathbb{F}_{q_1}}(\lambda)\in \mathbb{F}_{q_1}^\times,$ ($\lambda$ exists by Lemma~\ref{trqq1}), and let $\mu=\mathrm{Tr}_{\mathbb{F}_q/\mathbb{F}_{q_1}}(v_1)\mathrm{Tr}_{\mathbb{F}_q/\mathbb{F}_{q_1}}(\lambda)^{-1}.$ 
	Then $\mu \in \mathbb{F}_{q_1}^\times,$ and
	$\mathrm{Tr}_{\mathbb{F}_q/\mathbb{F}_{q_1}}(\mu^{-1}v_1)=\mathrm{Tr}_{\mathbb{F}_q/\mathbb{F}_{q_1}}(\lambda),$ and hence there exists $\lambda_1\in \mathbb{F}_q$ such that $\lambda-\mu^{-1}v_1=\lambda_1^{\phi^{-j}}-\lambda_1$ by additive form of Hilbert 90.
	Thus, by a direct calculation with the algorithm in Lemma~\ref{Borel} and \ref{uni},
	\begin{equation*}
		\begin{aligned}
		(x^{hg})^{h(\mu)X_S(0,0,\lambda_1)}&=(X_S(0,0,\mu^{-1}v_1)\phi^j)^{X_S(0,0,\lambda_1)}\\
		&=X_S(0,0,\mu^{-1}v_1+\lambda_1^{\phi^{-j}}-\lambda_1)\phi^j
		&=X_S(0,0,\lambda)\phi^j\in G_\omega.
		\end{aligned}
	\end{equation*}
	
	\subsubsection*{Case \rm(ii) $\la x \ra \cap Q=1$}\
	
	Let $x=x_3x_3'=x_3'x_3,$ where $|x_3|$ is a power of $3$ and $|x_{3'}|$ is coprime to $3.$ ($x_3$ and $x_3'$ exists by \cite[Lemma 22.18]{JL_repbook}).
    Similarly, let $\phi=\phi_3\phi_{3'}=\phi_{3'}\phi_3,$ with $|\phi_3|$ a power of $3$ and $|\phi_{3'}|$ coprime to $3.$
	Note that there exists $g_0\in K$ such that $x_3^{g_0}\in Q{:}\la \phi_3 \ra,$ since $Q{:}\la \phi_3 \ra$ is a Sylow-$3$ subgroup of $K.$
	By a similar argument in the case when $\la x \ra \cap Q$ is a subgroup of $Z(Q)$ of order $3,$ there exists $g_1\in Q$ such that $x_3^{g_0g_1}\in \la \phi \ra$.
	Thus, there exists  some integer $i$ such that $x_3^{g_0g_1}=\phi^i,$ and $|\phi^i|$ is a $3$-power. 
	Since $g_0g_1\in K$ and $$(x_{3'})^{g_0g_1}\in C_K(x_3^{g_0g_1})=C_K(\phi^i)=(C_{\phi^i}(Q){:}C_{\phi^i}(H(q_0))){:}\la \phi \ra,$$
	 it follows that there exists $g_2\in C_K(\phi_3^i)$ such that $x_{3'}^{g_0g_1g_2}$ is contained in a Hall $3'$-subgroup $C_{\phi_3^i}(H(q_0)){:}\la \phi_{3'} \ra$ of  $C_K(\phi_3^i).$
	 Thus, 
	 \begin{equation*}
	 	\la x^{g_0g_1g_2} \ra \leqslant \la x_3^{g_0g_1g_2} \ra \la x_{3'}^{g_0g_1g_2} \ra \leqslant C_{\phi_3^i}(H(q_0)){:}\la \phi \ra\leqslant G_\omega.
	 \end{equation*}
    This finishes the case $\la x \ra \cap  Q\leqslant Z(Q).$
    
    \subsubsection*{Case \rm(iii) $|\la x \ra \cap Q|=3$ and $\la x \ra\cap Z(Q)=1$ }\

    Note that there exists $g\in G$ such that $\la x^g \ra \cap Q'=\la X_S(0,1,0) \ra$ by Lemma~\ref{conj_uni}, and hence 
    \begin{equation*}
    	x^g \in C_{G}(X_S(0,1,0))=(Q'{:}\la h(-1) \ra){:}\la \phi \ra
    \end{equation*}
   by Lemma~\ref{uni}.
   Let $x^g=x_{2'}^gx_2^g=x_2^gx_{2'}^g,$ where $|x_{2'}|$ is odd and $|x_2|$ is a $2$-power ($x_2$ and $x_{2'}$ exists by \cite[Lemma 22.18]{JL_repbook}).
   Then $x_{2'}^g\in Q'{:}\la \phi \ra,$ since $Q'{:}\la \phi \ra$ is the unique Hall $2'$-subgroup of $(Q'{:}\la h(-1) \ra){:}\la \phi \ra.$
   Thus, there exists $u_2,v_2\in \mathbb{F}_q$ and some integer $i$ such that $x_{2'}^g=X_S(0,u_2,v_2)\phi^i.$
   Let $k=|\phi^i|$ and $q=q_1^k$, so $\mathbb{F}_{q_1}$ is the fixed field of $\phi^i$. 
   Note that 
   \begin{equation*}
   	\begin{aligned}
   	(x_{2'}^g)^k&=X_S(0,u_2+u_2^{\phi^i}+\dots+u_2^{\phi^{(k-1)i}},v_2+v_2^{\phi^i}+\dots+v_2^{\phi^{(k-1)i}})\\
   	&=X_S(0,\mathrm{Tr}_{\mathbb{F}_q/\mathbb{F}_{q_1}}(u_2),\mathrm{Tr}_{\mathbb{F}_q/\mathbb{F}_{q_1}}(v_2))\in \la X_S(0,1,0) \ra,
   	\end{aligned}
   \end{equation*}
   it follows that $\mathrm{Tr}_{\mathbb{F}_q/\mathbb{F}_{q_1}}(u_2)\in \{\pm 1\},$ and there exists $\lambda \in \mathbb{F}_{q_0}$ such that $\mathrm{Tr}_{\mathbb{F}_q/\mathbb{F}_{q_1}}(\lambda)=\mathrm{Tr}_{\mathbb{F}_q/\mathbb{F}_{q_1}}(u_2)$ by Lemma~\ref{trqq1}.
   Thus, there exists $u_3$ and $v_3$ in $\mathbb{F}_q$ such that $\lambda-u_2=u_3^{\phi^{-i}}-u_3$ and $v_2=v_3^{\phi^{-i}}-v_3$ by additive form of Hilbert 90, yielding
   \begin{equation*}
   	\begin{aligned}
   	x_{2'}^{gX_S(0,u_3,-v_3)}&=(X_S(0,u_2,v_2)\phi^i)^{X_S(0,u_3,-v_3)}=X_S(0,u_2+u_3^{\phi^{-i}}-u_3,v_2-v_3^{\phi^{-i}}+v_3)\phi^i\\
   	&=X_S(0,\lambda,0)\phi^i \in G_\omega.
   	\end{aligned}
   \end{equation*}
  For convenience, let $g_1=X_S(0,u_3,-v_3)\in Q',$ we claim there exists $g_2\in C_G(x_{2'}^{gg_1})$ such that 
  $x_2^{gg_1g_2}\in \la h(-1) \ra.$
  To see this, first note that $C_G(x^{gg_1}_{2'}) \leqslant (Q'{:}\la h(-1) \ra){:}\la \phi \ra,$ this is because $\la x^{gg_1}_{2'} \ra \cap Q'=\la X_S(0,1,0) \ra$ and $$C_G(x^{gg_1}_{2'})\leqslant C_G(\la x^{gg_1}_{2'} \ra \cap Q')=C_G(\la X_S(0,1,0)\ra )=(Q'{:}\la h(-1) \ra){:}\la \phi \ra.$$
  On the other hand, $\la h(-1) \ra$ commutes with $x_{2'}^{gg_1}=X_S(0,\lambda, 0)\phi^i,$ it follows that $\la h(-1) \ra$ is a Sylow $2$-subgroup of $C_G(x_{2'}^{gg_1}).$
  Thus, there exists $g_2\in C_G(x_{2'}^{gg_1})$ such that $x_2^{gg_1g_2}\in \la h(-1) \ra,$ by noting that $x_2x_{2'}=x_{2'}x_2.$
  This proves the claim.
  Therefore, 
  $$x^{gg_1g_2}=x_{2'}^{gg_1g_2}x_2^{gg_1g_2}\in G_\omega,$$
  this completes the case $|\la x \ra \cap Q|=3$ and $\la x \ra\cap Z(Q)=1$.  
  
  \subsubsection*{Case \rm(iv) $|\la x \ra \cap Q|=9$ }\
  
  We claim that there exists $u_0\in \mathbb{F}_{q_0}$ such that $\la x \ra \cap Q$ is conjugate to $\la X_S(1,u_0,0) \ra$ in $G_0.$
  To see this, let $k$ be the order of $xG_0$ in $G/G_0$. Then $x^k$ is an element of order $9$ in $Q.$
  If $x^k$ is conjugate to $x^{-k}$ in $G_0$, then $x^k$ is conjugate to $X_S(1,1,0)$ in $G_0$ combining Lemma~\ref{conj9} and \ref{conj_uni}.
  Assume $x^k$ is not conjugate to $x^{-k}$ in $G_0.$ 
  Let $u_0\in \mathbb{F}_{q_0}$ be an element such that $\mathrm{Tr}_{\mathbb{F}_q/\mathbb{F}_3}(u_0-1)\neq 0,$ ($u_0$ exists by Lemma~\ref{trqq1}). 
  Then there does not exist $t'\in \mathbb{F}_q$ such that $u_0-1=-(t')^{3\theta}+t'$ by additive form of Hilbert 90, and hence $X_S(1,u_0,0)$ is not conjugate to $X_S(1,u_0,0)^{-1}$ in $G_0$ by Lemma~\ref{conj9}.
  Thus, $x^k$ is conjugate to one of $X_S(1,u_0,0)$ and $X_S(1,u_0,0)^{-1}$ in $G_0$ by Lemma~\ref{conj_uni}, and this completes the claim.
  Thus, there exists $g\in G$ such that $x^g\in C_G(X_S(1,u_0,0)).$
  We next claim that $x^g\in Q{:}\la \phi \ra.$ 
  To see this, first note that
  $$x^g\in C_G( X_S(1,u_0,0)) \leqslant N_G(Q)$$ by \cite[Lemma~1]{Levchuk85}.
  Further, in $N_G(Q)/Q'\cong \mathrm{A\Gamma L}_1(q),$ it is not difficult to see that 
  \begin{equation*}
  	C_{N_G(Q)/Q'}(X_S(1,u_0,0)Q')=(Q{:}\la \phi \ra)/Q'.
  \end{equation*} 
 This proves the claim.
 Thus, there exists $t_1,u_1,v_1\in \mathbb{F}_q$ and some integer $i$ such that $x^g=X_S(t_1,u_1,v_1)\phi^i.$
 Since $X_S(1,u_0,0)^{x^g}=X_S(1,u_0,0),$ 
 \begin{equation}\label{eqc}
 	\begin{aligned}
 	X_S(1,u_0,0)^{X_S(t_1,u_1,v_1)}&=X_S(1,u_0-(t_1)^{3\theta}+t_1,-t_1u_0+u_1+t_1^{3\theta+1}-t_1-t_1^{3\theta}+t_1^2)\\
 	&=X_S(1,u_0,0)^{\phi^{-i}}=X_S(1,u_0^{\phi^{-i}},0)
 	\end{aligned}
 \end{equation}
 by Lemma~\ref{uni} \rm(iii).
 Thus, $u_0^{\phi^{-i}}-u_0=-(t_1)^{3\theta}+t_1.$ 
 Note that $\mathrm{Tr}_{\mathbb{F}_{q_0}/\mathbb{F}_3}(u_0^{\phi^{-i}}-u_0)=0,$   and hence there exists $t'\in \mathbb{F}_{q_0}$ such that $u_0^{\phi^{-i}}-u_0=-(t')^{3\theta}+t'$ by additive form of Hilbert 90.
 It follows that $(t_1-t')^{3\theta}-(t_1-t')=0,$ and hence $t_1-t'\in \mathbb{F}_3,$ yielding $t_1\in \mathbb{F}_{q_0}.$
 Thus, $u_1=t_1u_0-t_1^{3\theta+1}+t_1+t_1^{3\theta}-t_1^2\in \mathbb{F}_{q_0}.$
 Recall that $k=|\phi^i|$ is the order of $x^gG_0$ in $G/G_0,$ let $q=q_1^k.$
 We finally find $g_1\in Q$ such that $x^{gg_1}\in G_\omega.$
 To do this, first note that
 \begin{equation*}
 	\begin{aligned}
 	(x^g)^k&=(X_S(t_1,u_1,v_1)\phi^i)^k=X_S(t_1+t_1^{\phi^i}+\dots+t_1^{\phi^{(k-1)i}},u',v')\\
 	&=X_S(\mathrm{Tr}_{\mathbb{F}_q/\mathbb{F}_{q_1}}(t_1),u',v')\in \la X_S(1,u_0,0) \ra,
 	\end{aligned}
 \end{equation*}
  and hence $\mathrm{Tr}_{\mathbb{F}_q/\mathbb{F}_{q_1}}(t_1)\in \{\pm 1\}.$
  Let $\mu=-\mathrm{Tr}_{\mathbb{F}_q/\mathbb{F}_{q_1}}(v_1)(\mathrm{Tr}_{\mathbb{F}_q/\mathbb{F}_{q_1}}(t_1))^{-1}.$
  Then $\mu\in \mathbb{F}_{q_1},$ and $\mathrm{Tr}_{\mathbb{F}_q/\mathbb{F}_{q_1}}(v_1+\mu t_1)=0,$ and hence there exists $v_2\in \mathbb{F}_q$ such that $v_1+\mu t_1=v_2^{\phi^{-i}}-v_2$ by additive form of Hilbert 90.
  Therefore, 
  \begin{equation*}
  	\begin{aligned}
  	(x^g)^{X_S(0,\mu,0)X_S(0,0,v_2)}&=(X_S(t_1,u_1,v_1)\phi^i)^{X_S(0,\mu,0)X_S(0,0,-v_2)}\\
  	&=(X_S(t_1,u_1,v_1)^{X_S(0,\mu,0)}\phi^i)^{X_S(0,0,-v_2)}\\
  	&=(X_S(t_1,u_1,v_1+\mu t_1)\phi^i)^{X_S(0,0,-v_2)}\\
  	&=X_S(t_1,u_1,v_1+\mu t_1-(v_2^{\phi^{-i}}-v_2))\phi^i\\
  	&=X_S(t_1,u_1,0)\phi^i\in G_\omega.
  	\end{aligned}
  \end{equation*}
  This completes the proof.
 	\end{proof}

 \begin{lem}\label{lem:subfield2}
 	Assume $G_\omega$ is a maximal subfield subgroup such that $(G_0)_\omega\cong {}^2G_2(q_0),$ where $q=q_0^r,$ and $r\neq 3$ is an odd prime, and $K\leqslant G$ is a fixer.
 	Then $K$ is conjugate to a subgroup of $N_G(E_0)=(Q{:}H(q_0)){:}\la \phi \ra,$ where $Q$ is the unipotent subgroup of $G_0$ defined in \ref{Borel}, $E_0=C_{Z(Q)}(\phi^r),$ and
 	\begin{equation*}
 		H(q_0)=\{h(s)\mid s\in \mathbb{F}_{q_0}^\times\},
 	\end{equation*}  
 where $h(s)$ is the element defined in Lemma~\ref{Borel}. 
 \end{lem}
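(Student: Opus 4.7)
The plan is to first reduce to the socle level via Lemma~\ref{lem:redu} and Lemma~\ref{lem:subfield}, and then propagate the containment from $K_0$ outward to $K$ using the characteristic subgroup structure of $K_0$ together with the fixer hypothesis.

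First I would apply Lemma~\ref{lem:redu}: $K_0 := K \cap G_0$ is a fixer of $G_0$ with $|K_0| \geqslant |(G_0)_\omega|$. Since $r \neq 3$, Lemma~\ref{lem:subfield} provides some $g_0 \in G_0$ with $K_0^{g_0} \leqslant Q{:}H(q_0) = N_{G_0}(E_0)$. Replacing $K$ by $K^{g_0}$, I may assume $K_0 \leqslant Q{:}H(q_0)$. The task is now to show that $K \leqslant (Q{:}H(q_0)){:}\langle \phi \rangle = N_G(E_0)$; note that $N_G(E_0)$ indeed takes this form because $\phi$ acts on $Z(Q) \cong \mathbb{F}_q$ as the Frobenius, which preserves the subfield $\mathbb{F}_{q_0}$ defining $E_0$.

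Next I would identify $Q_K := K_0 \cap Q$, the unique (hence characteristic) Sylow $3$-subgroup of $K_0$, uniqueness following from $Q \trianglelefteqslant Q{:}H(q_0)$. Since $|K_0| \geqslant |(G_0)_\omega| = q_0^3(q_0^3+1)(q_0-1)$ is much larger than $|H(q_0)| = q_0-1$, $Q_K$ is non-trivial. Because $K_0 \trianglelefteqslant K$, $K$ normalizes $Q_K$, so by \cite[Lemma 1]{Levchuk85} applied to this non-trivial unipotent subgroup, $K \leqslant N_G(Q_K) \leqslant N_G(Q) = B{:}\langle \phi \rangle$. Thus every $x \in K$ has the form $x = q_x h(s)\phi^j$ with $q_x \in Q$, $s \in \mathbb{F}_q^\times$, and some integer $j$, and it remains to prove $s \in \mathbb{F}_{q_0}^\times$ for each such $x$.

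For the final step, I would invoke the fixer hypothesis: $x$ is $G$-conjugate to some $y \in G_\omega = {}^2G_2(q_0){:}\langle \psi \rangle$. Since $G/G_0 \cong \langle \phi \rangle$ is cyclic, the image of $x$ determines $\phi^j \in \langle \psi \rangle$. The key point is that the torus-type component $h(s)\phi^j$ of $x$ (modulo $Q$) must be $G$-conjugate to an element $h(s_0)\phi^j$ with $s_0 \in \mathbb{F}_{q_0}^\times$, because $y$'s torus-type component lies in $H(q_0){:}\langle \psi \rangle$. Tracking the actions of the Weyl element $h(-1)$ and the Frobenius $\phi$ on $H$, the $G$-orbit of $h(s)\phi^j$ in $H{:}\langle \phi \rangle$ is contained in $\{h(s^{\pm 3^i})\phi^j : i\}$; meeting $H(q_0)\phi^j$ forces $s^{\pm 3^i} \in \mathbb{F}_{q_0}^\times$ for some $i$, and since $\mathbb{F}_{q_0}^\times$ is the unique subgroup of order $q_0-1$ in the cyclic group $\mathbb{F}_q^\times$ and is Frobenius-stable, this forces $s \in \mathbb{F}_{q_0}^\times$ itself.

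The main obstacle is the last step, namely making the ``Jordan-type'' decomposition $x = q_x h(s)\phi^j$ and its conjugacy behaviour rigorous when $\phi^j \neq 1$, since then $h(s)\phi^j$ is not semisimple in the usual algebraic-group sense and one must argue directly at the level of $B{:}\langle \phi \rangle / Q$. I expect the cleanest realization passes through the $7$-dimensional representation of Lemma~\ref{7dimrep}, where $\rho(Q{:}H(q_0)) = \rho(B) \cap \mathrm{GL}_7(q_0)$; the diagonal entries of $\rho(x)$ are determined by $s$, and after possibly a further conjugation by an element of $B{:}\langle \phi \rangle$ (which preserves $K_0 \leqslant Q{:}H(q_0)$, since this subgroup is normal in $B{:}\langle \phi \rangle$), the diagonal entries are brought into $\mathbb{F}_{q_0}$, completing the proof.
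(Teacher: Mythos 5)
The first half of your plan coincides with the paper's opening moves and is fine: reduce to $K_0\leqslant Q{:}H(q_0)$ via Lemmas~\ref{lem:redu} and~\ref{subfield}, observe $K\cap Q\neq 1$, and conclude $K\leqslant N_G(K\cap Q)\leqslant N_G(Q)$. The gap is in the second half, which is the actual content of the lemma. Your key fusion claim --- that the $G$-class of $h(s)\phi^j$ meets $H{:}\langle\phi\rangle$ only in $\{h(s^{\pm 3^i})\phi^j\}$ --- is false: conjugating $h(s)\phi^j$ by $h(\mu)\in H$ gives $h(s\,\mu^{\phi^{-j}}\mu^{-1})\phi^j$, so already inside the coset $H\phi^j$ the class of $h(s)\phi^j$ is controlled only by the norm $\mathrm{N}_{\mathbb{F}_q/\mathbb{F}_{q_1}}(s)$ (with $\mathbb{F}_{q_1}$ the fixed field of $\phi^j$), since by Hilbert 90 the factors $\mu^{\phi^{-j}}\mu^{-1}$ run over all norm-one elements. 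Consequently ``$x$ is conjugate into $G_\omega$, hence $s\in\mathbb{F}_{q_0}^\times$'' does not follow, and in fact your intermediate goal ``$s\in\mathbb{F}_{q_0}^\times$ for every $x\in K$'' is unattainable: for $\mu$ a generator of $\mathbb{F}_q^\times$, the group $K=\bigl(N_G(E_0)\bigr)^{h(\mu)}$ is a fixer (Lemma~\ref{p:fixers}(iv) and Lemma~\ref{lem:subfield1}) with $K\cap G_0=Q{:}H(q_0)$, yet it contains $\phi^{h(\mu)}=h(\mu^{\phi^{-1}}\mu^{-1})\phi$, whose torus part has order $(q-1)/2>q_0-1$. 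So a genuine further conjugation of the whole group is required, and your proposed mechanism for producing it (the fusion claim, plus the remark about the $7$-dimensional representation) does not supply one; note also that conjugating each $x\in K$ separately proves nothing about $K$ itself, and your sketch never reduces to a single element nor explains why one conjugation works simultaneously.

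The paper resolves exactly these points. It writes $K=K_0\langle x\rangle$ for a single element $x$ (possible since $G/G_0$ is cyclic) and observes that it suffices to conjugate this one $x$ into $(Q{:}H(q_0)){:}\langle\phi\rangle$ by an element of $N_G(Q)$, because $Q{:}H(q_0)\trianglelefteqslant N_G(Q)$, so such conjugations preserve $K_0\leqslant Q{:}H(q_0)$. If $\langle x\rangle\cap Q\neq 1$, a centralizer argument (as in Lemma~\ref{lem:subfield1}) places $x$ in $(Q{:}\langle h(-1)\rangle){:}\langle\phi\rangle$ with a conjugator in $N_G(Q)$, with no use of the fixer hypothesis. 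If $\langle x\rangle\cap Q=1$, one first arranges $x=h(s)\phi^i$, and then the twisted conjugacy your claim excludes is precisely the tool: when $\langle x\rangle\cap K_0$ has order $1$ or $2$ the norm of $s$ is $1$ or $-1$ and Hilbert 90 conjugates $x$ to $\phi^i$ or $h(-1)\phi^i$; only when $|\langle x\rangle\cap K_0|>2$ is the fixer hypothesis invoked, via a prime $p_1$ dividing $|\langle x\rangle\cap K_0|$ for which the Sylow $p_1$-subgroup of $H(q_0)$ is a Sylow subgroup of $(G_0)_\omega$ (Lemma~\ref{Hall}), a conjugation inside $G_\omega$ of the subgroup $D\leqslant\langle x\rangle\cap K_0$ of order $p_1$ into $H(q_0)$, and the torus-normalizer structure $N_G(D)=(H{:}\langle\eta\rangle){:}\langle\phi\rangle$, where $\eta$ is the involution inverting $H$ (not $h(-1)$, which lies in $H$ and centralizes it --- your sketch conflates the two). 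None of this case analysis appears in your proposal, so the proof is not complete as outlined.
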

\begin{proof}
	Note that we may replace $K$ with $K^g$ for some suitable $g\in G$ and assume $K_0\leqslant N_{G_0}(E_0)=Q{:}H(q_0)$ by Lemma~\ref{p:fixers} \rm(iv) and \ref{subfield}. 
    Then $K\cap Q\neq 1$ since $|K_0|\geqslant |(G_0)_\omega|,$ and hence $$K\leqslant N_G(K\cap Q)\leqslant N_G(Q)$$ by \cite[Lemma 1]{Levchuk85}.
	Let $x\in K$ such that $K=K_0\la x \ra,$ ($x$ exists since $K/K_0$ is a subgroup of $G/G_0,$ and $G/G_0$ is cyclic).
	Note that $N_{G_0}(E_0)=Q{:}H(q_0)\trianglelefteqslant N_G(Q),$ it suffices to show that $x^g\in N_G(E_0)=(Q{:}H(q_0)){:}\la \phi \ra$ for some $g\in N_G(Q).$
	
	We first assume $\la x \ra \cap Q \neq 1.$
	Let $k$ be an integer such that $x^k$ is a non-identity element in $\la x \ra \cap Q.$
	By a similar argument in Case \rm(i), \rm(iii) and \rm(iv) where $\la x \ra \cap Q\neq 1$ in the proof of Lemma~\ref{lem:subfield1}.
	There exists $g\in G$ such that 
	\begin{equation*}
		x^g\in C_G(x^k)\leqslant (Q{:}\la h(-1) \ra){:}\la \phi \ra.
	\end{equation*}
	Moreover, note that $g$ normalizes the subgroup of $Q$ generated by $(x^k)^{\la g \ra},$ $g\in N_G(Q)$ by applying \cite[Lemma~1]{Levchuk85}.
	This completes the case $\la x \ra \cap Q\neq 1.$
	
	It remains to consider $\la x \ra \cap Q=1.$
	Note that $x\in N_G(Q)$, by a similar argument in Case \rm(ii) of the proof of Lemma~\ref{lem:subfield1}, there exists $g_0\in N_G(Q)$ such that $x^{g_0}\in H{:}\la \phi \ra.$
    Thus, $$\la x^{g_0} \ra \cap K_0 \leqslant H\cap K_0\leqslant H(q_0),$$ and one of the following three cases holds: \rm(i): $\la x \ra \cap K_0=1$, \rm(ii): $|\la x \ra \cap K_0|=2$, \rm(iii): $|\la x \ra \cap K_0|$ is a divisor of $q_0-1$ greater than $2$, by noting that $\la x^{g_0} \ra \cap K_0=(\la x \ra \cap K_0)^{g_0},$ we next analyze these three cases one by one.
	
	\subsubsection*{Case \rm(i): $\la x \ra \cap K_0=1$}
	Since $x^{g_0}\in H{:}\la \phi \ra$, there exists $s\in \mathbb{F}_{q}^\times$ and some integer $i$ such that $x^{g_0}=h(s)\phi^i.$
	Let $|\phi^i|=k$ and $q=q_1^k.$  
	Then
	\begin{equation*}
		(x^{g_0})^k=h(ss^{\phi^i}s^{\phi^{2i}}\dots s^{\phi^{(k-1)i}})=h(\mathrm{N_{\mathbb{F}_q/\mathbb{F}_{q_1}}(s)})\in \la x^{g_0} \ra \cap K_0=1=h(1),
	\end{equation*}
   where $\mathrm{N}_{\mathbb{F}_q/\mathbb{F}_{q_1}}(s)$ is the norm of $s$ of the extension $\mathbb{F}_{q}/\mathbb{F}_{q_1}.$
   Thus, $\mathrm{N_{\mathbb{F}_q/\mathbb{F}_{q_1}}(s)}=1,$ and hence there exists $\mu \in \mathbb{F}_q^\times$ such that $s=\mu^{\phi^{-i}}\mu^{-1}$ by Hilbert 90, yielding
   \begin{equation*}
   	(x^{g_0})^{h(\mu^{-1})}=h(\mu)h(s)\phi^ih(\mu^{-1})=h(\mu s\mu^{-\phi^{-i}})\phi^i=h(s(\mu^{\phi^{-i}}\mu^{-1})^{-1})\phi^i=\phi^i.
   \end{equation*}
   Therefore, by taking $g=g_0h(\mu^{-1})\in N_G(Q),$ $x^g=\phi^i,$ and hence $$K^g=K_0^g\la x^g \ra \leqslant (Q{:}H(q_0)){:}\la \phi \ra=N_G(E_0).$$
   This completes this case.
   \subsubsection*{Case \rm(ii) $|\la x \ra \cap K_0|=2$}
   Similarly with Case \rm(i), let $s\in \mathbb{F}_q^\times$ and $i$ be an integer such that $x^{g_0}=h(s)\phi^i.$
   Let $k=|\phi^i|$ and $q=q_1^k.$
   Thus,
   \begin{equation*}
   	(h(s)\phi^i)^k=h(ss^{\phi^i}s^{\phi^{2i}}\dots s^{\phi^{(k-1)i}})=h(\mathrm{N}_{\mathbb{F}_q/\mathbb{F}_{q_1}}(s))=h(-1),
   \end{equation*}
   and hence $\mathrm{N_{\mathbb{F}_q/\mathbb{F}_{q_1}}(s)}=-1.$
   On the other hand,
   $-1=(-1)^k=\mathrm{N}_{\mathbb{F}_q/\mathbb{F}_{q_1}}(-1),$ since $k$ is odd.
   Thus, there exists $\mu\in \mathbb{F}_q^\times$ such that $-s=\mu^{\phi^{-i}}\mu^{-1}$ by Hilbert 90.
   Therefore, 
   \begin{equation*}
   	x^{g_0h(\mu^{-1})}=h(\mu)h(s)\phi^ih(\mu^{-1})=h(\mu s\mu^{-\phi^{-i}})\phi^i=h(-1)\phi^i\leqslant (Q{:}H(q_0)){:}\la \phi \ra=N_G(E_0).
   \end{equation*}
   Therefore, this case is completed by taking $g=g_0h(\mu^{-1}).$
   
   \subsubsection*{Case \rm(iii) $|\la x \ra \cap K_0|>2$} 
   Let $g'\in G$ such that $x^{g_0g'}\in G_\omega,$ ($g'$ exists since $K$ is a fixer of $G$).
   Since $(G_0)_\omega$ contains a cyclic Hall subgroup of order $\frac{q_0-1}{2}$ by Lemma~\ref{Hall}, it follows that there exists a prime $p_1$ dividing $|\la x \ra \cap K_0|$ such that the Sylow $p_1$-subgroup of $H(q_0)$ is a Sylow subgroup of $(G_0)_\omega\cong {}^2G_2(q_0).$
   Let $D$ be the unique cyclic subgroup of order $p_1$ of $\la x \ra \cap K_0.$
   Then there exists $g_1\in (G_0)_\omega$ such that $D^{g_0g'g_1}\leqslant H(q_0),$ and hence 
   \begin{equation*}
   		x^{g_0g'g_1}\leqslant C_{G_\omega}(D^{g_0g'g_1})\leqslant H(q_0){:}\la \phi \ra, \, \text{and}\, 
   		(D^{g_0})^{g'g_1}=D^{g_0}\leqslant H(q_0)
   \end{equation*}
   by noting that $D^{g_0}\leqslant H(q_0).$
   Thus, $$g'g_1\in N_G(D^{g_0})=(H{:} \la \eta \ra){:}\la \phi \ra,$$
   where $$\eta=x_{a+b}(1)x_{-(a+b)}(1)x_{a+b}(1)x_{3a+b}(1)x_{-(3a+b)}(1)x_{3a+b}(1)$$ is the involution $n$ defined at \cite[Page 5]{matrixgen} (we use $\eta$ instead of $n$ here to avoid the abuse of notation).
   Note that there exists $g_2\in \{1, \eta\}$ such that $g'g_1g_2\in H{:}\la \phi \ra\leqslant N_G(Q),$ also note that $\eta$ normalizes $H(q_0)$ and commutes with $\phi.$
   Therefore, $$x^{g_0g'g_1g_2}\leqslant H(q_0){:}\la \phi \ra \leqslant (Q{:}H(q_0)){:}\la \phi \ra$$ with $g_0g'g_1g_2\in N_G(Q).$
   This case is completed by taking $g$ to be $g_0g'g_1g_2.$
   
   This completes the proof.      
	\end{proof}

\begin{lem}\label{lem:q27}
	If $q=27$ and $G_\omega$ is the normalizer of a cyclic Sylow $7$-subgroup of order $7,$ then the normalizer of a Sylow $2$-subgroup is indeed a fixer.
\end{lem}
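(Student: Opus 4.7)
The plan is to reduce to the case $G=\Aut(G_0)=G_0{:}\langle\phi\rangle$ with $|\phi|=3$, since the case $G=G_0$ is Lemma~\ref{f:nq+1/4}. I would exploit that $\phi$ has fixed subgroup $C_{G_0}(\phi)\cong {}^2G_2(3)\cong \PGammaL_2(8)$, which already contains a Sylow $2$-subgroup $P$ of $G_0$ (since $|{}^2G_2(3)|_2=8=|G_0|_2$), and use this to arrange that $\phi$ centralizes the whole of $N_{G_0}(P)$.

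The first step is an order comparison: Lemma~\ref{normsyl2} gives $|N_{G_0}(P)|=168$, while the Borel of $\PGammaL_2(8)$ also has order $8\cdot 7\cdot 3=168$, so $N_{G_0}(P)=N_{{}^2G_2(3)}(P)\leqslant C_{G_0}(\phi)$, and hence $K:=N_G(P)=N_{G_0}(P)\times\langle\phi\rangle$. Choosing $A_1\leqslant C_{G_0}(\phi)$ similarly puts $\phi\in G_\omega=N_G(A_1)$, with $(G_0)_\omega\cap C_{G_0}(\phi)=N_{{}^2G_2(3)}(A_1)\cong C_7{:}C_6$.

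Given $x\in K$, I would write $x=x_0\phi^i$ with $x_0\in N_{G_0}(P)$ and $i\in\{0,1,2\}$. The case $i=0$ is immediate from Lemma~\ref{f:nq+1/4}. For $i\in\{1,2\}$ the key observation is that $x_0\in C_{G_0}(\phi)$ commutes with $\phi^i$, so any conjugator $g\in C_{G_0}(\phi)$ satisfies $x^g=x_0^g\phi^i$; hence it suffices to find $g\in C_{G_0}(\phi)$ with $x_0^g\in N_{{}^2G_2(3)}(A_1)$, whereupon $x^g\in N_{{}^2G_2(3)}(A_1)\cdot\langle\phi\rangle\leqslant G_\omega$.

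The main remaining step, and in my view the principal obstacle, is then a conjugacy-class verification inside ${}^2G_2(3)\cong \PGammaL_2(8)$: every element of $\mathrm{A\Gamma L}_1(8)\cong 2^3{:}7{:}3$ must be $\PGammaL_2(8)$-conjugate to an element of $C_7{:}C_6$. Elements of $\mathrm{A\Gamma L}_1(8)$ have orders in $\{1,2,3,6,7\}$. The involutions and order-$7$ elements lie in $\PSL_2(8)$; all involutions of $\PSL_2(8)$ form a single class, and the three $\PSL_2(8)$-classes of order-$7$ elements fuse into one under the outer field automorphism. The order-$3$ and order-$6$ elements lie in the outer cosets $\PSL_2(8)\phi^j$, and Shintani descent applied to $C_{\PSL_2(8)}(\phi)\cong \PSL_2(2)\cong S_3$ shows that each outer coset contains a unique $\PSL_2(8)$-class of each order. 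Since both $\mathrm{A\Gamma L}_1(8)$ and $C_7{:}C_6$ meet both outer cosets in order-$3$ and order-$6$ elements, the required conjugate exists, and this completes the reduction. The delicate point is this outer class fusion, which cannot be seen from the internal structure of $\mathrm{A\Gamma L}_1(8)$ alone and requires working in the ambient group $\PGammaL_2(8)$.
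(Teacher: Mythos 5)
Your proposal is correct, and while it starts from the same two observations as the paper (reduce to $G=\Aut(G_0)$, and choose the Sylow $2$-subgroup inside $C_{G_0}(\phi)\cong{}^2G_2(3)\cong\PGammaL_2(8)$ so that $N_G(P)=N_{G_0}(P)\times\langle\phi\rangle$ --- your order count $168=|N_{G_0}(P)|=|N_{\PGammaL_2(8)}(P)|$ in fact justifies an inclusion the paper only asserts), the core verification is genuinely different. The paper shows that each $k\in K\setminus K_0$ normalizes a cyclic subgroup of order $7$, splitting into the cases $|k|\in\{3,6,21\}$ and, especially for $|k|=6$, carrying out explicit conjugations in the coordinates $X_S(t,u,v)$ together with the additive Hilbert 90 to bring powers of $k$ into $\langle\phi\rangle$ and then into $C_{G_0}(\phi^i)$; it thus stays entirely inside the root-element calculus of Sections 3--4. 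You instead let the splitting $x=x_0\phi^i$ with $x_0\in N_{G_0}(P)\leqslant C_{G_0}(\phi)$ do all the work: since conjugation by elements of $C_{G_0}(\phi)$ fixes the $\phi$-part, the whole lemma reduces to the single fusion statement that every element of $\mathrm{A\Gamma L}_1(8)$ is $\PGammaL_2(8)$-conjugate into $N_{\PGammaL_2(8)}(A_1)\cong 7{:}6$, which you settle via the class structure of $\PGammaL_2(8)$ (one class of involutions, fusion of the three classes of $7$-elements, and --- the only delicate point, which you correctly isolate and handle coset by coset --- a unique $\PSL_2(8)$-class of order-$3$ and of order-$6$ elements in each outer coset, by Shintani descent to $\PSL_2(2)\cong S_3$, or equivalently by the known class list of $\PSL_2(8){:}3$). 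Your route is shorter and uniform over element orders, at the price of importing these external conjugacy facts about $\PGammaL_2(8)$; the paper's route is longer and case-driven but self-contained within the machinery it has already built.
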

\begin{proof}
	If $G=G_0,$ then this statement holds by Lemma~\ref{f:subfi}.
	Thus, $$G_0<G=\Aut(G_0)={}^2G_2(27){:}\la \phi \ra.$$
	Note that $C_{G_0}(\phi)\cong \PGammaL_2(8)$ contains the normalizer of a Sylow $2$-subgroup $A$ of $G_0,$ it follows that $N_G(A)=N_{G_0}(A)\times \la \phi \ra.$ Let $K=N_G(A).$ 
	
   We next prove that $K$ is indeed a fixer of $G.$
   To see this, let $k$ be an arbitrary element in $K,$ we will prove that $k$ is conjugate to an element in $G_\omega$ in $G.$
   Since $G_\omega$ is the normalizer of a Sylow $7$-subgroup, it suffice to show that $k$ normalizes a cyclic subgroup of order $7.$ 
   If $k\in K_0,$ then $k$ is conjugate to an element in $G_\omega$ in $G$ by Lemma~\ref{lem:reds} \rm(ii), we thus assume $k\notin K_0.$
   Thus, $kK_0$ is an element of order $3$ in $K/K_0,$ and the order of $k$ is one of $3,6$ and $21.$  
   We discuss these three cases one by one.
   
   Assume first the order of $k$ is 21. Then $k$ centralizes $\la k^3 \ra,$ and $|k^3|=7,$ thus $k$ is conjugate to an element in $G_\omega.$
   
   Assume next the order of $k$ is $3.$ Then there exists $g\in K$ such that $k^g\in \la t' \ra \times \la \phi \ra,$ where $t'$ is an element of order $3$ in $K_0.$ 
   Thus, $k^g$ normalizes a cyclic subgroup of order $7$ in $K_0,$ and hence $k^g$ is conjugate to some element in $G_\omega.$
   
   Assume finally the order of $k$ is $6.$
   Then there exists $g\in G$ such that $(k^2)^g$ is contained in $\la t^2 \ra \times \la \phi \ra,$ where $t$ is an element of order $6$ in $K.$
   Note that there exists $g_1\in G_G(\phi)\cong {}^2G_2(3)\cong \PGammaL_2(8)$ such that $(t^2)^{g_1}\in \la X_S(0,1,0) \ra$ by Lemma~\ref{centuni} and \ref{uni}. 
   Therefore, 
   \begin{equation*}
   	(k^2)^{gg_1}\in \la t^2 \ra^{g_1}\times \la \phi \ra=\la X_S(0,1,0) \ra \times \la \phi \ra.
   \end{equation*}
   Thus, $(k^2)^{gg_1}=X_S(0,a,0)\phi^i,$ such that $a \in \{\pm 1\}$ and $i\in \{1,2\}.$
   Since $$\mathrm{Tr}_{\mathbb{F}_{27}/\mathbb{F}_3}(a)=aa^{\phi}a^{\phi^2}=3a=0,$$
   it follows that $a=b-b^{\phi^{-i}}$ for some $b\in \mathbb{F}_{27}$ by additive form of Hilbert 90, and hence
   \begin{equation*}
   	\begin{aligned}
   	(k^2)^{gg_1X_S(0,b,0)}&=(X_S(0,a,0)\phi^i)^{X_S(0,b,0)}\\
   	&=X_S(0,-b,0)X_S(0,a,0)X_S(0,b^{\phi^{-i}},0)\phi^i=\phi^i. 
   \end{aligned}
   \end{equation*} 
  Thus, $(k^3)^{gg_1X_S(0,b,0)}$ is an involution in $C_{G_0}(\phi^i)\cong \PGammaL_2(8),$ and hence $(k^3)^{gg_1X_S(0,b,0)}$ normalizes a cyclic subgroup of order $7$ and $(k^2)^{gg_1X_S(0,b,0)}$ centralizes this cyclic subgroup of order $7$. 
  Thus, $k$ normalizes a cyclic subgroup of order $7,$ and hence $k$ is conjugate to an element in $G_\omega.$
  
  This completes the proof.
 
	\end{proof}

\subsection*{Proof of Theorem~\ref{thm:main}:}
Theorem~\ref{thm:main} is a combination of Lemma~\ref{lem:reds}, \ref{lem:subfield1}, \ref{lem:subfield2} and Lemma~\ref{lem:q27}. \qed

\end{document}